\let\csname equation*\endcsname\relax
\let\csname endequation*\endcsname\relax
\numberwithin{equation}{section}
\numberwithin{figure}{section}
\newcommand\tabcaption{\def\@captype{table}\caption}
\newtheorem{thm}{Theorem}[section]
\newtheorem{cor}[thm]{Corollary}
\newtheorem{lem}[thm]{Lemma}
\newcommand{\bfx}{\mathbf{x}}
\newcommand{\bfI}{\mathbf{I}}
\newcommand{\bfz}{\mathbf{z}}
\newcommand{\bfX}{\mathbf{X}}
\newcommand{\bfw}{\mathbf{w}}
\newcommand{\E}{\mathbb{E}}
\newcommand{\Prob}{\mathbb{P}}
\newcommand{\reals}{\mathbb{R}}
\newcommand{\unit}{\mathds{1}}
\newcommand{\bfd}{\mathbf{d}}
\newcommand{\bfY}{\mathbf{Y}}
\newcommand{\bff}{\mathbf{f}}
\newcommand{\bfh}{\mathbf{h}}
\newcommand{\bfW}{\mathbf{W}}
\newcommand{\cov}{\text{cov}}
\newcommand{\var}{\text{var}}
\begin{document}
\title{Spatial localization for nonlinear dynamical stochastic models for excitable media}
\author{Nan Chen\textsuperscript{(1)}, Andrew J. Majda\textsuperscript{(2)} and Xin T. Tong\textsuperscript{(3)}}
\maketitle
\noindent (1)Department of Mathematics, University of Wisconsin -
Madison, WI, USA

\noindent (2)Department of Mathematics and Center for Atmosphere Ocean Science, Courant Institute of Mathematical Sciences, New York University, New York, NY, USA\\ Center for Prototype Climate Modeling, New York University Abu Dhabi, Saadiyat Island, Abu Dhabi, UAE

\noindent (3)Department of Mathematics, National University of Singapore, Singapore 119076 

\abstract{Nonlinear dynamical stochastic models are ubiquitous in different areas. Their statistical properties are often of great interest, but are also very challenging to compute. Many excitable media models belong to such types of complex systems with large state dimensions and the associated covariance matrices have localized structures. In this article, a mathematical framework to understand the spatial localization  for a large class of stochastically coupled nonlinear systems in high dimensions is developed. Rigorous mathematical analysis shows that the local effect from the diffusion results in an exponential decay of the components in the covariance matrix as a function of the distance while the global effect due to the mean field interaction synchronizes different components and contributes to a global covariance. The analysis is based on a comparison with an appropriate linear surrogate model, of which the covariance propagation can be computed explicitly. Two important applications of these theoretical results are discussed. They are the spatial averaging strategy for efficiently sampling the covariance matrix and the localization technique in data assimilation. Test examples of a  linear model and a stochastically coupled FitzHugh-Nagumo model for excitable media are adopted to validate the theoretical results. The latter is also used for a systematical study of the spatial averaging strategy in efficiently sampling the covariance matrix in different dynamical regimes.}
\section{Introduction}
Nonlinear dynamical stochastic models are ubiquitous in different areas \cite{majda2016introduction, majda2006nonlinear, stelling2001book, sheard2009principles}. Key features of these complex systems are multiscale dynamics, high-dimensional phase space, nonlinear energy transfers, highly non-Gaussian probability density functions (PDFs), intermittent instability, random internal and external forcing as well as extreme events. Understanding and predicting these complex turbulent dynamical systems have both scientific and practical significance. Many excitable media models belong to such types of nonlinear dynamical stochastic systems \cite{lindner2004effects, tyson1988singular, perc2005spatial, holden2013nonlinear}. These excitable media models are ubiquitous in nature and can be found in physical, chemical, and biological systems that are far from thermodynamic equilibrium. One important feature of these excitable medium is that they are susceptible to finite perturbations, where a variety of wave patterns can be triggered. Examples of excitable medium include solitary waves, target like patterns, and spiral waves.


For nonlinear dynamical stochastic models, the high dimensionality imposes great challenges to the understanding of the underlying mechanisms, the design of effective prediction schemes and various uncertainty quantification problems \cite{bellman2015adaptive, powell2007approximate}. In particular, the high dimensionality leads to both a large computational cost and an enormous storage even for one single model simulation. Moreover, it is often of interest to  calculate  the time evolution of the statistics, especially the covariance matrix that includes the information of the interactions between different components \cite{anderson1979optimal, kalman1960new}. Since analytical solutions for complex nonlinear systems are typically unavailable, Monte Carlo methods are often applied. However, the affordable sample size is very limited due to the computational cost while the size of the covariance matrix, which is the square of the state variables, can be many magnitudes larger in many real applications that involve millions or billions of state variables. Furthermore, non-Gaussian features due to the nonlinear nature of the underlying dynamics such as extreme events can greatly affect the calculation of the covariance matrix and introduce extra difficulties. Therefore, understanding the covariance structures in nonlinear dynamical stochastic models and then developing efficient computing strategies that can exploit these structures become extremely important.


To facilitate the calculation of the covariance matrix, various approximations are applied in practice. One widely used strategy is to adopt Gaussian approximations for the underlying dynamics since there are many well-developed analytical and numerical methods for sampling high-dimensional Gaussian distributions. However, completely ignoring the non-Gaussian information in the underlying dynamics often leads to large model errors. One effective strategy is to decompose the entire phase space into a high dimensional Gaussian part and a low dimensional non-Gaussian part using the conditional Gaussian framework \cite{CMT14, CMT14b, CM18, CMT17, majda2014blended, lee2015multiscale}. Another strategy is to exploit the fact that the majority of the problem uncertainty is distributed within a low dimensional subspace of the Karhuen-Lo\`{e}ve expansion, then uncertainty quantification (UQ) can be operated just in this subspace for sampling and computational simplicity \cite{SpantiniEtAl15, CuiEtAl14,Cotter13,BuiEtAl13,FlathEtAl11,PetraEtAl14,CuiEtAl16b}.


This paper focuses on a different structure-exploiting strategy known as localization, which is widely used in many areas including the numerical weather prediction \cite{HoutMitch2001,Hamilletal2001,Houtekamer2005} and efficiently quantifying the uncertainty in excitable media \cite{biktasheva2006localization, ohta1989higher}. In many applications, the system state variables  represent the information at different spatial locations, and the dependence of certain information between different grid points decays quickly when their distance becomes large. In other words, two grid points can be regarded as nearly independent if they are far from each other. The localization strategy exploits this feature and adopts a band matrix as the approximation of the covariance \cite{rekleitis2004particle, anderson2007exploring}. Thus, the localization allows the development of much simpler approaches in analyzing the covariance matrix and thus facilitates the study of many important UQ problems.


While the localization strategy  has a wide range of real-world applications, it remains unclear about the role of different dynamical components  on affecting the covariance localization. In addition, despite many empirical evidences, there is no rigorous justifiction of the covariance decay as a function of the spatial distance. Nevertheless, these are crucial topics in practice for at least two reasons. First, it is known that applying localization by approximating the full covariance matrix with a band matrix may introduce some biases in quantifying the uncertainty of the underlying system \cite{Tong18, nerger15} and the current bias estimates \cite{Tong18,MTM18} require the knowledge of the underlying covariance matrix as well as a quantitative description of the decay in the covariance. Second, in order to apply localization in practice, one often needs to specify a localization radius $L$, which represents the typical distance for two components to be roughly independent. Unfortunately, in the absence of a rigorous theory to understand the covariance decay behavior, determining $L$ often relies on exhaustive tuning and data fitting, which can be expensive and inaccurate.


This paper aims at developing a theoretical framework to understand the covariance behavior for a large class of stochastically coupled nonlinear systems in high dimensions that include many excitable media models and other nonlinear complex models.
Rigorous mathematical analysis shows that the local effect from the diffusion results in an exponential decay of the components in the covariance matrix as a function of the distance while the global effect due to the mean field interaction synchronizes different components and contributes to a global covariance. To achieve this, a covariance comparison principle is established. Then by explicitly computing the covariance propagation in a linear surrogate model, an upper bound on the covariance entries can be obtained.

Two important applications of these theoretical results are illustrated. First, a spatial averaging sampling strategy is developed. It makes use of the statistical symmetry to replace the samples from repeated experiments using ensemble methods by samples at different spatial grid points \cite{CM17pnas, ricketson2015multilevel}, which greatly reduces the number of repeated experiments while the accuracy remains the same. In other words, combining the spatial averaging strategy with a small number of Monte Carlo simulation of the underlying nonlinear complex systems, the covariance with a localization structure can be sampled in an efficient and accurate fashion, at least for a finite time. Secondly, the same structure can be exploited to improve the accuracy of data assimilation \cite{kalnay2003atmospheric, majda2012filtering, evensen2009data, law2015data}. The theory developed here provides a practical guideline to choose the localization radius $L$ in a systematical way that facilitates an effective data assimilation for large dimensional systems.

The rest of the article is organized as follows. Section \ref{Sec:MainResults} states the main theoretical results for the spatial localization for nonlinear dynamical stochastic models and the two applications. Section \ref{Sec:Analysis} includes the detailed analysis of the covariance matrix with both local and nonlocal effects. Numerical simulations of  a  linear model and a stochastically coupled FitzHugh-Nagumo (FHN) model for excitable media in different dynamical regimes are shown in Section \ref{Sec:Numerics}. The article is concluded in Section \ref{Sec:Conclusions}. Details of proofs are included in Appendix.

\section{Problem setup and main results}\label{Sec:MainResults}
To describe the behavior of nonlinear dynamical stochastic models for an excitable media, we consider a general  stochastic diffusion reaction model with mean field interaction on a one dimensional torus $[0,1]$:
\begin{equation}
\label{eqn:reactiondiffusion}
\frac{d}{dt} u_t=\nu\Delta u_t+ F(u_t)+\int h(u_t(x))dx+\dot{W}(t,x),
\end{equation}
where $W(t,x)$ is  a spatial white noise. We can discretize \eqref{eqn:reactiondiffusion} in space, and write down the dynamics at the $i$-th grid point $u_i(t)=u_t(\frac{i}{N})$ as
\begin{equation}
\label{eqn:discretize}
du_i(t)= \frac{\nu}{2}N^2 (u_{i+1}(t)+u_{i-1}(t)-2u_i(t))dt+F(u_i(t))dt+\frac1N\sum_{j=1}^N h(u_j(t))+dw_i(t),
\end{equation}
where $w_i(t)$ are independent standard Wiener processes.

The deterministic forcings in \eqref{eqn:discretize} can be classified into two groups based on their range of action. Here $\frac{\nu}{2}N^2 (u_{i+1}(t)+u_{i-1}(t)-2u_i(t))+F(u_i(t))$ describes how does a component $u_i$ force its neighbors and itself while $\frac1N\sum_{j=1}^N h(u_j(t))$ models the McKean Vlasov type of mean field interaction \cite{mckean1966class}. We will adapt this classification to simplify our notation. Moreover, we will consider the general setting where $u_i$ is not univariate. This will be useful when we discuss the FHN model, as we need both the membrane voltage and the recovery forcing.

Specifically, we let $\bfx(t)=(\bfx_1(t),\cdots, \bfx_N(t))$ be a stochastic process in $\reals^{d}$. For simplicity, we assume all block share the same dimension, that is $\bfx_i(t)\in \reals^q$ and $d=qN$. Suppose each block follows an SDE
\begin{equation}
\label{sys:cyclicSDE}
d\bfx_i =\bff (t, \bfx_{i-1}, \bfx_{i}, \bfx_{i+1}) dt+\frac1N\sum_{j=1}^N\bfh(t,\bfx_j) dt +\Sigma d \bfw_i(t),\quad i=1,\ldots, N.
\end{equation}
In \eqref{sys:cyclicSDE}, $\bfw_i(t)$ are independent standard Wiener processes of dimension $q$, and $\Sigma$ is symmetric. The index $i$ should be interpreted in a cyclic fashion,  that is $\bfx_0=\bfx_N, \bfx_{1}=\bfx_{N+1}$. The natural distance   between indices is given by
\[
\bfd(i,j)=\min\{|i-j|, |i+N-j|, |j-i+N|\}.
\]
For simplicity, we assume $\{\bfx_i(0)\}_{i=1,\ldots,N}$ are i.i.d. samples from  $\mathcal{N}(m_0,\Sigma_0 \Sigma^T_0)$.

Many high dimensional stochastic models can be formulated as \eqref{sys:cyclicSDE}. Examples include the Lorenz 96 model \cite{lorenz1996predictability} and the FHN model \cite{lindner2004effects}.

Our main results can be summarized as the follows.
\begin{thm}
\label{thm:rough}
Suppose the following constants are independent of $N$
\begin{equation}
\label{eqn:simhomo}
\begin{gathered}
\lambda_0:= \sup_{i,\bfx,s\leq t}\{\lambda_{max}\left(\nabla_{\bfx_i}\bff(s,\bfx_{i-1}, \bfx_{i}, \bfx_{i+1})+\nabla_{\bfx_i}\bff(s,\bfx_{i-1}, \bfx_{i}, \bfx_{i+1})^T\right)\},\\
\lambda_F:= \sup_{i,\bfx,s\leq t}\{\|\nabla_{\bfx_{i-1}}\bff(s,\bfx_{i-1}, \bfx_{i}, \bfx_{i+1})\|,\|\nabla_{\bfx_{i+1}}\bff(s,\bfx_{i+1}, \bfx_{i}, \bfx_{i+1})\|\},\\
\lambda_H:=\sup_{i, \bfx_i, s\leq t}\|\nabla_{\bfx_i} \bfh(s,\bfx_i)\|.
\end{gathered}
\end{equation}
Here $\lambda_{max}$ denotes the maximum eigenvalue. Then for any test function $g$ and $\beta>0$, there is a constant $C_{\beta,t}$ independent on $N$, such that
\[
\cov(g(\bfx_i(t)), g(\bfx_j(t)))\leq C_{\beta,t}\|\nabla g\|^2_\infty  \left(e^{-\beta \bfd(i,j)} + \frac{1}{N} \right).
\]
Here $\|\nabla g\|_\infty:=\sup_\bfx \|\nabla g(\bfx)\|$. And if $\bfh\equiv 0$, the bound can be improved to $C_{\beta,t}\|\nabla g\|^2_\infty  e^{-\beta \bfd(i,j)}$. \end{thm}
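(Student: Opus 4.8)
The plan is to reduce the covariance of the nonlinear observables to the size of the response (sensitivity) of the flow, and then to control that response through a linear surrogate whose covariance propagation is explicit. Since $\bfx(t)$ is a smooth function of the Gaussian sources $\bfG=(\bfx(0),\{\bfw_k\}_{k})$, I would first invoke a Gaussian interpolation (Mehler) identity for the covariance of functionals of a Gaussian vector: writing $\bfx_i(t)=\Phi_i(\bfG)$ and using $\nabla_\bfG[g(\bfx_i(t))]=(\nabla_\bfG\bfx_i(t))^\top\nabla g(\bfx_i(t))$, one obtains
\[
\cov(g(\bfx_i(t)),g(\bfx_j(t)))=\int_0^1\E\big[\nabla g(\bfx_i(t))^\top (\nabla_\bfG\bfx_i(t))(\nabla_\bfG\bfx_j^\rho(t))^\top \nabla g(\bfx_j^\rho(t))\big]\,d\rho,
\]
where $\bfx^\rho$ is the solution driven by the interpolated sources. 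Bounding $\nabla g$ by $\norm{\nabla g}_\infty$ reduces the theorem to estimating $\sum_{k}\norm{\mathbf{J}_{ik}}\,\norm{\mathbf{J}_{jk}}$, where $\mathbf{J}_{ik}$ is the $(i,k)$ block of the response matrix $\nabla_\bfG\bfx(t)$, i.e. the block of the propagator of the variational (tangent) equation from source $k$ to component $i$.

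Second, I would set up the comparison. The response blocks solve a linear matrix ODE $\tfrac{d}{dt}\mathbf{J}(t)=\bfA(t)\,\mathbf{J}(t)$ whose (random, trajectory-dependent) coefficient $\bfA(t)$ has exactly the structure of the drift Jacobian: block-diagonal self-terms $\nabla_{\bfx_i}\bff$, tridiagonal neighbour terms $\nabla_{\bfx_{i\pm1}}\bff$, and an all-to-all mean-field term $\tfrac1N\nabla\bfh$. The uniform bounds in \eqref{eqn:simhomo} control these blocks: the symmetric part of the diagonal by $\lambda_0$, the neighbour blocks by $\lambda_F$, and the mean-field blocks by $\lambda_H/N$. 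I would therefore bound $\norm{\mathbf{J}_{ik}(t)}$ by the corresponding propagator of the deterministic surrogate matrix $\bfA$ with scalar blocks $\lambda_0\,\mathbf{I}$ (diagonal), $\lambda_F\,\mathbf{I}$ (neighbours) and $(\lambda_H/N)\,\mathbf{I}$ (mean field) — this nonnegative, translation-invariant surrogate dominates the true response entrywise, and its covariance propagation (a Lyapunov equation $\dot{\Vbar}=\bfA\Vbar+\Vbar\bfA^\top+\mathrm{diag}(\Sigma\Sigma^\top)$) can be solved explicitly by the matrix exponential $e^{\bfA t}$.

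Third, I would extract the two regimes from $e^{\bfA t}$ via its Dyson/Peano series $e^{\bfA t}=\sum_n \tfrac{t^n}{n!}\bfA^n$. Splitting each factor into its local part (diagonal $+$ neighbours) and its mean-field part, any local-only route from $k$ to $i$ needs at least $\bfd(i,k)$ neighbour hops, so its weight is at most $\sum_{n\ge \bfd(i,k)}\tfrac{(C\lambda_F t)^n}{n!}e^{\lambda_0 t}$; since $a^n/n!\le C_\beta e^{-\beta n}$ for every $\beta>0$, this yields the factor $C_{\beta,t}e^{-\beta \bfd(i,k)}$ and explains both the arbitrary rate $\beta$ and the $t$-dependence of the constant. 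Any route that uses a mean-field factor carries a spare $1/N$ that is not cancelled after summing the $O(1)$ row-sums of the local propagator, producing the global $O(1/N)$ contribution. Feeding $\norm{\mathbf{J}_{ik}}\lesssim C_{\beta,t}(e^{-\beta\bfd(i,k)}+1/N)$ into $\sum_k\norm{\mathbf{J}_{ik}}\norm{\mathbf{J}_{jk}}$ and using the convolution bound $\sum_k e^{-\beta\bfd(i,k)}e^{-\beta\bfd(j,k)}\le C e^{-\beta\bfd(i,j)}$ together with $\sum_k(1/N)\cdot(\cdots)=O(1/N)$ gives the claimed $C_{\beta,t}\norm{\nabla g}_\infty^2(e^{-\beta\bfd(i,j)}+1/N)$. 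When $\bfh\equiv0$ the mean-field factor is absent, no spare $1/N$ ever appears, and the bound sharpens to $C_{\beta,t}\norm{\nabla g}_\infty^2 e^{-\beta\bfd(i,j)}$.

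The step I expect to be the main obstacle is the comparison principle itself. Writing the covariance evolution directly for the nonlinear system does not close, because the drift linearization is evaluated along random trajectories and one is left with expectations of products of the Jacobian and the covariance rather than products of their expectations; the uniform spectral bounds $\lambda_0,\lambda_F,\lambda_H$ cannot be pulled out of such expectations. Passing instead through the pathwise response $\mathbf{J}(t)$ (equivalently, a synchronisation coupling of two trajectories driven by the same noise) is what makes the domination by the deterministic nonnegative surrogate legitimate, and justifying that entrywise domination — and the interchange of $\nabla_\bfG$ with the stochastic flow needed for the interpolation identity — is the technical heart of the argument.
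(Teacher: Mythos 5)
Your proposal is correct and follows the same architecture as the paper's proof: a Gaussian interpolation (Stein-type) identity reduces the covariance to the sensitivity of the flow with respect to the Gaussian sources (the paper's Lemma \ref{lem:Gaussint}, applied after an Euler discretization and a telescoping decomposition over the noise increments, with $h\to 0$ at the end); the pathwise response is then dominated blockwise by the propagator of a deterministic, nonnegative, translation-invariant linear surrogate (Theorem \ref{thm:main}, with the entrywise comparison justified by Lemma \ref{lem:exp}); and the surrogate propagator is split into a local part giving $e^{-\beta \bfd(i,j)}$ and a mean-field part giving $O(1/N)$. You also correctly identify the comparison principle as the technical heart. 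Two execution differences are worth recording. First, the paper does not bound each response block $\mathbf{J}_{ik}$ separately and then convolve; it bounds the product $R^k=\nabla_k\Phi(\bfW_{0:T})\,\nabla_k\Phi(\bfW^{\theta,k})^T$ jointly, by deriving a differential inequality for $\|R^k_{i,j}\|_F$ and comparing it directly with $[Q(s)]_{i,j}$. This sidesteps your convolution step $\sum_k e^{-\beta\bfd(i,k)}e^{-\beta\bfd(j,k)}\leq C e^{-\beta\bfd(i,j)}$, which as stated is slightly too strong: the indices $k$ on the short arc between $i$ and $j$ already contribute $(\bfd(i,j)+1)e^{-\beta\bfd(i,j)}$, so one must give up a little in $\beta$ to absorb the polynomial prefactor; since the theorem is claimed for every $\beta>0$ this is harmless, but it should be said. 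Second, for the exponential decay you truncate the Taylor series of the surrogate exponential by counting the minimal number of neighbour hops, whereas the paper interprets the local part of the surrogate as the generator of a continuous-time random walk on $\mathbb{Z}/N\mathbb{Z}$ and applies a Chernoff bound to its Laplace transform; the two estimates are equivalent in strength, and both correctly explain why the rate $\beta$ is arbitrary at the price of a $t$-dependent constant.
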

The exact value of $C_{\beta,t}$ can be found below in Theorem \ref{thm:heat}.
The main conclusion here is that when the   dimension $N$ is large,  for components that are far from each other,
their covariance is close to zero. In other words, the covariance between model components is significant only for nearby components.
Following \cite{MTM18}, we call such a covariance structure to be \emph{local}. Below, we illustrate two important applications that show such a property is crucial for uncertainty quantification (UQ).

\subsection{Sampling with spatial averaging}\label{Sec:SpatialAveraging}
\label{sec:spaceavg}
One fundamental  UQ question regarding SDEs such as \eqref{sys:cyclicSDE} is how to efficiently compute $\E g(\bfx_i(t))$ for a given test function and ensure that the estimation error is small. The standard Monte Carlo approach involves simulating \eqref{sys:cyclicSDE} multiple times, and use the sample average $\hat{g}_{mc}=\frac1K \sum_{j=1}^K g(\bfx^{(j)}_i(t))$ as an estimator. However, with the increase of the dimension $N$, the computational costs to generate each independent  sample $\bfx^{(j)}(t)$ increases dramatically. Therefore, only a small number of samples $K$ can be adopted in practice.

One important observation is that \eqref{sys:cyclicSDE} is invariant under index shifting. Therefore, by symmetry, $\bfx_i(t)$ shares a common distribution for all $i$. As a consequence, each of them can be viewed as an individual, despite not independent, sample. Exploiting this perspective, it has been proposed in \cite{CM17pnas} the following spatial average as an estimator of $\E g(\bfx_i(t))$
 \[
 \hat{g}_{K,N}=\frac1{K}\sum_{j=1}^K\frac 1N \sum_{i=1}^N g(\bfx^{(j)}_i(t)).
 \]
Since the samples are independent, the statistical properties of $\hat{g}_{M,N}$ is determined by the ones of
\begin{equation}
\label{eqn:ghat}
\hat{g}_N=\frac1N \sum_{i=1}^N g(\bfx_i(t)).
\end{equation}
It is straightforward to verify that  $\hat{g}_N$ and $\hat{g}_{K,N}$ are unbiased:
 \[
 \E\hat{g}_{K,N}= \E\hat{g}_N=\frac1N \sum_{i=1}^N \E g(\bfx_i(t))=\E g(\bfx_1(t)).
 \]
 The accuracy of  these estimators can be measured by their variance. By independence, we have $\var \hat{g}_{K,N}=\frac1K  \hat{g}_N$, which is given by
\begin{equation}
\label{eqn:varsum}
\var \hat{g}_N=\frac{1}{N^2} \sum_{i,j=1}^N \cov(g(\bfx_i), g(\bfx_j)) =\frac1N \sum_{j=1}^N  \cov(g(\bfx_1), g(\bfx_j)).
\end{equation}
Then plugging in the estimate from Theorem \ref{thm:rough}, the result in \eqref{eqn:varsum} can be bounded by
\[
\var \hat{g}_N\leq
\frac1N \sum_{j=1}^N C_{\beta,t}
\|\nabla g\|^2_\infty  \left(e^{-\beta \bfd(1,j)} + \frac{1}{N} \right)\leq \frac1N \frac{C_{\beta,t } \|\nabla g\|^2_\infty }{1-e^{-\beta}}.
\]
Therefore we have
\[
\var \hat{g}_{K,N}\leq \frac1{NK} \frac{C_{\beta,t } \|\nabla g\|^2_\infty }{1-e^{-\beta}}.
\]
In contrast, if we consider the standard estimator $\hat{g}_{mc}$, its variance is $\frac1K \var(g(\bfx_i(t)))$, which is of $O(N)$ multiple of the one of  $\var \hat{g}_{M,N}$. In other words, the spatial averaging has increase the effective sample size by a factor of $N$.

\subsection{Localization in  data assimilation}
\label{sec:localDA}
In data assimilation (DA), sequential partial observations are incorporated to improve predictions of a stochastic dynamical system  \cite{kalnay2003atmospheric, majda2012filtering, evensen2009data, law2015data}. One major application of DA is numerical weather prediction, where forecasts are obtained by combining simulations of the dynamical models with satellite data. The associated dimensions of such data assimilation problems are extremely high, often in orders of millions in order to describe variables in different temporal and spatial scales of the underlying nonlinear complex systems.
Moreover, for these refined models, the current computational power can only provide around one hundred simulation samples. DA algorithms, for example the ensemble Kalman filter (EnKF), need to use  these 100 samples to represent the covariance matrix of model components.

The classic sample covariance,
\[
\widehat{C}:=\frac1{K-1}\sum_{j=1}^K (\bfx^{(j)}(t)-\bar{\bfx}) \cdot (\bfx^{(j)}(t)-\bar{\bfx}(t))^T, \quad \bar{\bfx}(t)=\frac1K\sum_{j=1}^K \bfx^{(j)}_t.
\] is not accurate in this setting.
A classic random matrix results \cite{BY88} indicates that if $\bfx^{(i)}_t$ are i.i.d. samples from $\mathcal{N}(0, C)$, the covariance estimation error $\|C-\widehat{C}\|=O(\sqrt{N/K})$. Therefore, the estimator is very inaccurate in the situations with the small samples and high dimensions, i.e., $K\ll N$. The intuition behind this mathematical phenomenon is that, spurious correlation may exist between each pair of components with a small probability because of the sampling effect, but there are $O(N^2)$ pairs of components. Thus, the resulting errors are very likely to be significant.

Nevertheless, if we are aware that covariance is local as described in Theorem \ref{thm:rough}, then the erroneous estimation can be made accurate. Note that the far-off-diagonal entries are close to zero. Thus, a natural way is to truncate these entries. In particular, with a given bandwidth $L$ and covariance matrix $C$, we define the localization of $C$ as
\[
[C^L]_{i,j}=[C]_{i,j} \mathbf{1}_{\bfd(i,j)\leq L}.
\]
It has been shown in \cite{BL08} that such a localization technique can significantly improve the sampling accuracy. A simplified version in \cite{Tong18} shows that if $\bfx(t)$ is Gaussian distributed, then there exists a uniform constant  $c$, so that for any $\epsilon>0$,
\begin{equation}
\label{tmp:RMT}
\Prob(\|\widehat{C}^L-C^L\|> \epsilon)\leq 8 \exp\left(2\log N- cK \min\left\{\frac{\epsilon}{2L \|C\|},\frac{\epsilon^2}{4L^2 \|C\|^2}\right\}\right).
\end{equation}
In other words, if $L$ is a fixed bandwidth, it requires only $L^2 O(\log N)$ many samples to find an accurate estimate of $C^L$.

On the other hand, one needs to notice that \eqref{tmp:RMT} indicates  $\widehat{C}^L$ is a good estimator of $C^L$, but not necessarily $C$. In order for it to also be  a good estimator of $C$, one  needs $\|C-C^L\|$ to be small as well. Theorem \ref{thm:rough} can guarantee this if there is only local interaction, i.e. $\bfh\equiv 0$. Since a covariance matrix's $l_2$-norm is bounded by its $l_1$-norm, we can plug in the estimate in Theorem \ref{thm:rough} with $g(x)=x$ and find
\begin{equation}
\label{eqn:localerror}
\|C-C^L\|\leq \max_i \sum_{\bfd(i,j)>L} \left|[C]_{i,j}\right|\leq \frac{2C_{\beta,t} e^{-\beta L}}{1-e^{-\beta}}.
\end{equation}
In practice, given an error threshold $\epsilon>0$, one can combine \eqref{eqn:localerror} and \eqref{tmp:RMT} together to find a bandwidth $L$ and sample size $K=O(L^2 \log N)$, such that
the covariance estimation error $\|\widehat{C}^L-C\|$ is below $2\epsilon$ with high probability.

Since localization involves using very simple numerical operation to significantly improve the estimation of the covariance matrix, it has been widely applied in all DA algorithms, in particular, the ensemble Kalman filter \cite{evensen03, HM98, HWS01, WH02, MY07, HKS07, TMK15, TMK15non}. Moreover, adaptation of the localization strategy can be applied to Bayesian inverse problems where the associated covariance matrices follow the description of Theorem \ref{thm:rough}. In particular, it has been shown in \cite{MTM18} that blocking the high dimensional components, and then applying Gibbs sampler, the resulting Markov chain Monte Carlo algorithms will converge to the desired distribution with a rate independent of the dimension. One fundamental assumption of all these practical computational methods is that the underlying system is spatially localized, yet there has been no rigorous justification of this assumption. This article closes this gap for systems of type of \eqref{sys:cyclicSDE}.

\subsection{Preliminaries}

 To facilitate our discussion, we will use the following notations: $\|v\|$  denotes the $l_2$ norm of $v$, while $\langle u, v\rangle=u^T v$ is the inner product; $\bfI_q$ denotes the identity matrix of dimension $q$. Given a matrix $A$, the norm $\|A\|$ is the $l_2$ operator norm of $A$; $\lambda_{max}(A)$ denotes the maximum eigenvalue of $A$ and $\|A\|_F$ is its Frobenius norm.  We use $[A]_{i,j}$ to denote the $(i,j)$-entry of $A$.

When we have a vector $\bfx$ of $qN$ dimension, we often view it as the concatenation of $N$ sub-blocks of dimension $q$. The decomposition can be written as $\bfx=(\bfx_1,\cdots, \bfx_N)$. Given a $qN\times qN$ matrix $A$, we write its $(i,j)$-th $q\times q$ sub-block as $A_{i,j}$ or $\{A\}_{i,j}$. The readers should not confuse these notations with the matrix entries $[A]_{i,j}$. In particular, we can write down the entries of $A_{i,j}$:
\[
[A_{i,j}]_{m,n}=[\{A\}_{i,j}]_{m,n}=[A]_{(i-1)q+m,(j-1)q+n},\quad i,j=1,\cdots, N,\quad m,n=1,\cdots,q.
\]
With a function $f$,  $\nabla f$ denotes its Jacobian matrix. $\nabla_{\bfx_i} f(\bfx)$ denotes the partial derivative with respect to $\bfx_i$, which is the $i$-th block-column of $\nabla f$.

For two symmetric matrices $A\preceq B$ indicates that $B-A$ is positive semidefinite.

\section{Analysis of covariance matrices}\label{Sec:Analysis}
We provide the explicit statements of our result in this section. The detailed proofs are allocated in Appendix.

\subsection{Linear covariance propagation}
Consider the following multivariate SDE:
\begin{equation}
\label{sys:SDE}
d\bfx_i(t)=\bff_i(t,\bfx_{i-1}, \bfx_{i}, \bfx_{i+1}) dt +\frac1N\sum_{j=1}^N\bfh_i(t,\bfx_j) dt+\Sigma d \bfw_i(t),\quad i=1,\ldots, N.
\end{equation}
Our main result is established through a covariance comparison principle between \eqref{sys:SDE} and a linear surrogate.
\begin{thm}
\label{thm:main}
Suppose the covariance propagation of \eqref{sys:SDE} is dominated by $(F_*(t), H_*(t))\in (\reals^{N\times N},  \reals^{N\times N})$ in the following sense:
\begin{enumerate}[(1)]
\item $\nabla_{\bfx_i}\bff_i(t,\bfx_{i-1}, \bfx_{i}, \bfx_{i+1})+\nabla_{\bfx_i}\bff_i(t,\bfx_{i-1}, \bfx_{i}, \bfx_{i+1})^T \preceq 2[F_*(t)]_{i,i} \bfI$.
\item $\|\nabla_{\bfx_{i-1}}\bff_i(t,\bfx_{i-1}, \bfx_{i}, \bfx_{i+1})\|\leq [F_*(t)]_{i,i-1}, \|\nabla_{\bfx_{i+1}}\bff_i(t,\bfx_{i-1}, \bfx_{i}, \bfx_{i+1})\|\leq [F_*(t)]_{i,i+1}$.
\item $\frac1N \|\nabla_{\bfx_j} \bfh_i (t,\bfx_j)\|\leq [H_{*}(t)]_{i,j}$.
\end{enumerate}
Let $G_*(s)=F_*(s)+H_*(s)$, and
\[
Q(s)=  \exp\left(\int^t_{t-s} G_* (r) dr\right) \exp\left(\int^t_{t-s} G_* (r) dr\right)^T.
\]
Then for any test function $g$ with bounded gradient,
\begin{equation}
\label{eqn:covspace}
|\text{cov} (g(\bfx_i (t)), g(\bfx_j(t)))|\leq \sqrt{q} \|\nabla g\|^2_\infty\left(\|\Sigma^2_0\|_F [Q(t)]_{i,j}+\|\Sigma^2\|_F\int^t_0 [Q(s)]_{i,j} ds\right).
\end{equation}
\end{thm}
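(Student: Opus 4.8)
The plan is to control the covariance by linearizing the dependence of $g(\bfx_i(t))$ on the two sources of randomness — the Gaussian initial data $\bfx(0)$ and the driving Brownian motion — and then to dominate the resulting sensitivities block by block by the scalar propagator of $G_*$. The conclusion then matches the covariance of an explicit linear Gaussian surrogate, whose Lyapunov-type formula produces exactly the two terms in \eqref{eqn:covspace}.

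First I would set up the Jacobian (tangent) flow $J(t,s)$ of \eqref{sys:SDE}, whose $(k,l)$ block is $J_{k,l}(t,s)=\partial\bfx_k(t)/\partial\bfx_l(s)\in\reals^{q\times q}$ and which solves $\frac{d}{dt}J(t,s)=\nabla\bfb(t,\bfx(t))\,J(t,s)$, $J(s,s)=\bfI$, where $\bfb$ is the full drift of \eqref{sys:SDE}. The heart of the argument is a comparison lemma: under (1)--(3), the block norms are dominated by the propagator $\Phi(t,s)$ of $G_*$ (the solution of $\frac{d}{dt}\Phi=G_*(t)\Phi$, $\Phi(s,s)=\bfI$), i.e. $\|J_{i,l}(t,s)\|\le[\Phi(t,s)]_{i,l}$. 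To prove it, fix a unit vector $\xi$ and a source index $l$, set $v_k(t)=J_{k,l}(t,s)\xi$, and compute $\frac{d}{dt}\|v_i\|^2=2\langle v_i,\frac{d}{dt}v_i\rangle$. Expanding $\frac{d}{dt}v_i$ through the block-columns of $\nabla\bfb$ and applying (1) to the diagonal self-interaction (as a symmetrized quadratic form), (2) to the nearest-neighbour couplings, and (3) to the mean-field couplings, each via Cauchy--Schwarz, gives $\frac{d}{dt}\|v_i\|\le\sum_k[G_*(t)]_{i,k}\|v_k\|$. Since $G_*=F_*+H_*$ has nonnegative off-diagonal entries (they are operator norms), it is a Metzler generator, so $\Phi$ is entrywise nonnegative and the comparison principle for cooperative linear systems yields $\phi(t)\le\Phi(t,s)\phi(s)$ componentwise for $\phi=(\|v_1\|,\dots,\|v_N\|)$; with $\phi(s)=e_l$ this is the claimed bound. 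When $G_*$ is time independent, $\Phi(t,s)=\exp((t-s)G_*)$, the matrix exponential appearing in $Q$. One care point: $\|v_i\|$ fails to be differentiable where it vanishes, which I would handle by the standard upper Dini-derivative / regularization argument.

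Next I would split the covariance with the law of total covariance, $\cov(g(\bfx_i(t)),g(\bfx_j(t)))=\cov(\E[g(\bfx_i(t))|\bfx(0)],\E[g(\bfx_j(t))|\bfx(0)])+\E[\cov(g(\bfx_i(t)),g(\bfx_j(t))\mid\bfx(0))]$, so the initial-data and Brownian fluctuations are treated separately. For the initial-data term, the chain rule gives $\nabla_{\bfx_l(0)}\E[g(\bfx_i(t))|\bfx(0)]=\E[J_{i,l}(t,0)^T\nabla g(\bfx_i(t))\mid\bfx(0)]$, of norm at most $\|\nabla g\|_\infty[\Phi(t,0)]_{i,l}$ by the comparison lemma; since $\bfx(0)$ is Gaussian with independent blocks of covariance $\Sigma_0^2$, a Gaussian covariance (interpolation / integration-by-parts) inequality bounds this term by $\|\nabla g\|_\infty^2\|\Sigma_0^2\|\sum_l[\Phi(t,0)]_{i,l}[\Phi(t,0)]_{j,l}=\|\nabla g\|_\infty^2\|\Sigma_0^2\|[Q(t)]_{i,j}$. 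For the Brownian term I would use the Clark--Ocone representation: conditionally on $\bfx(0)$, the Malliavin derivative is $D^{(l)}_sg(\bfx_i(t))=\nabla g(\bfx_i(t))^T J_{i,l}(t,s)\Sigma$, and the Itô isometry gives $\cov(\cdot,\cdot\mid\bfx(0))=\E[\int_0^t\sum_l\langle\E[D^{(l)}_sg(\bfx_i(t))|\mathcal{F}_s],\E[D^{(l)}_sg(\bfx_j(t))|\mathcal{F}_s]\rangle\,ds\mid\bfx(0)]$. Bounding each block inner product by $\|\nabla g\|_\infty^2\|\Sigma\|^2[\Phi(t,s)]_{i,l}[\Phi(t,s)]_{j,l}$, summing over $l$, and changing variables $s\mapsto t-s$ produces $\|\nabla g\|_\infty^2\|\Sigma^2\|\int_0^t[Q(s)]_{i,j}\,ds$. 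Passing from operator to Frobenius norms ($\|\Sigma^2\|\le\|\Sigma^2\|_F$, $\|\Sigma_0^2\|\le\|\Sigma_0^2\|_F$) and absorbing the $q$-dimensional block conversions into the factor $\sqrt q$ yields \eqref{eqn:covspace}.

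The main obstacle is the comparison lemma: turning the one-sided bounds (1)--(3) on the Jacobian blocks into an entrywise domination of the tangent flow by the scalar propagator. The delicate points are that (i) the logarithmic-derivative inequality must be derived at the level of $\|v_i\|$ rather than $\|v_i\|^2$ so that the comparison matrix is exactly $G_*$, (ii) the Metzler structure of $G_*$ is precisely what makes the vector comparison principle applicable with the correct signs, and (iii) the identification $\Phi(t,s)=\exp(\int_{t-s}^t G_*)$ requires time-homogeneity (or commutativity) of $G_*$, which is the regime relevant to \eqref{eqn:simhomo}. The remaining ingredients — the total-covariance split, the Gaussian and Clark--Ocone covariance identities, and the norm bookkeeping — are routine once the requisite smoothness of $\bff_i,\bfh_i$ (guaranteeing a differentiable, Malliavin-differentiable flow) is assumed.
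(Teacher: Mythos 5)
Your proof is correct in outline, but it takes a genuinely different route from the paper's. The paper deliberately avoids continuous-time stochastic calculus: it discretizes \eqref{sys:SDE} via the splitting iteration \eqref{eqn:update}, writes $\bfX(T)=\Phi(\bfW_{0:T})$ as a deterministic function of the Gaussian increments, telescopes the covariance by swapping one increment $\bfW_k$ for an independent copy at a time (the quantities $\Delta_k$ in \eqref{tmp:Deltak}), applies the Stein-type interpolation Lemma \ref{lem:Gaussint} to each swap, and then bounds the resulting \emph{products} $R^k=\nabla_k\Phi(\bfW_{0:T})\nabla_k\Phi(\bfW^{\theta,k})^T$ of two derivative flows evaluated along two \emph{different} noise paths, via an ODE comparison against $Q_h$ (Lemmas \ref{lem:derivative}, \ref{lem:matrix}, \ref{lem:exp}), finally sending $h\to 0$ using the consistency Lemma \ref{lem:htozero}. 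You instead stay in continuous time, split the covariance by the law of total covariance into an initial-condition part and a Brownian part, prove a single-block tangent-flow comparison $\|J_{i,l}(t,s)\|\le[\Phi(t,s)]_{i,l}$ (the continuous analogue of the paper's Lemma \ref{lem:derivative}, but for one flow rather than a product of two), and then invoke Gaussian interpolation for the first part and Clark--Ocone with It\^o isometry for the second. This is precisely the Malliavin-calculus alternative the paper flags in its remark after \eqref{eqn:update} as ``possible and more elegant'' but declines to pursue for accessibility. What your route buys: no discretization, no $h\to 0$ limit, no need for the two-path product bound (conditioning on $\mathcal{F}_s$ lets you bound each Malliavin derivative separately since the block bound on $J$ is uniform), and a slightly sharper constant ($\|\Sigma^2\|$ in place of $\sqrt q\,\|\Sigma^2\|_F$). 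What the paper's route buys: it is elementary and self-contained, needing only the divergence theorem rather than the Clark--Ocone formula and Malliavin differentiability of the flow, which you would otherwise have to justify under the stated smoothness hypotheses. Two points you correctly flag and should make rigorous if you write this up: the Dini-derivative treatment of $\frac{d}{ds}\|v_i\|$ at zeros of $v_i$, and the fact that the identification $\Phi(t,t-s)=\exp\bigl(\int_{t-s}^t G_*(r)\,dr\bigr)$ requires time-homogeneity or commutativity of the $G_*(r)$ (a caveat equally present in the paper's own statement of $Q$).
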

This theorem shows that if the covariance propagation of \eqref{sys:SDE} is dominated by $G_*=F_*+H_*$, then the spatial covariance of \eqref{sys:SDE} is also dominated by the  linear surrogate model associated with $G_*$:
\begin{equation}
\label{sys:linearSDE}
d\bfx_i=[G_*]_{i,i}\bfx_i dt+[G_*]_{i,i-1} \bfx_{i-1}dt+ [G_*]_{i,i+1}\bfx_{i+1}dt+\Sigma d \bfw_i(t),\quad i=1,\ldots, N.
\end{equation}

To see that \eqref{sys:linearSDE} has the spatial covariance structure as in \eqref{eqn:covspace}, we concatenate \eqref{sys:linearSDE} into one equation for $\bfx(t)$
\[
d\bfx(t)=D_{G_*}\bfx dt+D_{\Sigma} d \bfw(t),\quad \bfx(0)\sim \mathcal{N}(0,D_{\Sigma_0}).
\]
Here, $D_{G_*}$,$D_{\Sigma}$ and $D_{\Sigma_0}$ are $qN\times qN$ dimensional matrices. Their $q\times q$ sub-blocks are given by
\[
\{D_{G_*}\}_{i,j}=[G_*]_{i,j}\bfI_q,\quad \{D_{\Sigma}\}_{i,j}=\unit_{i=j} \Sigma,\quad  \{D_{\Sigma_0}\}_{i,j}=\unit_{i=j} \Sigma_0.
\]
Since this equation is linear, we can apply the Duhamel's formula and write the solution as
\[
\bfx(t)=q(t)\bfx(0)+ \int^t_0  q(s)  D_{\Sigma} d\bfw(s),\quad q(s):=\exp\left(\int^t_{t-s} D_{G_*} (r) dr\right).
\]
By Ito's isometry,
\[
\cov\, \bfx(t)= q(t)D_{\Sigma_0}D_{\Sigma_0}^T  q(t)^T+ \int^t_0 q(s)D_\Sigma D_\Sigma^T   q(s)^Tds.
\]
In particular, the covariance between $\bfx_i(t)$ and $\bfx_j(t)$ is given by $\{\cov\, \bfx(t)\}_{i,j}$. Since
$D_{\Sigma_0}$ and $D_\Sigma$ are block diagonal, it is not difficult to establish \eqref{eqn:covspace} for system \eqref{sys:linearSDE}.

\subsection{A representation of the covariance matrix}
One difficulty in reaching Theorem \ref{thm:main} is how to compute the covariance matrix for a multivariate SDE.
We will establish a framework for covariance matrix computation, starting from a simple Stein's lemma type of formula:
\begin{lem}
\label{lem:Gaussint}
Suppose $X,Y$ are independent standard Gaussian random variables of dimension $d$, $f$ and $g$ are  smooth functions with bounded first order derivatives. Let  $X^\theta=\cos\theta\, X +\sin\theta\, Y$, then
\[
\text{cov}(f(X),g(X))=\E f(X)g(X)-f(X)g(Y)=\int^{\frac\pi2}_0 \sin \theta  \E \langle \nabla f(X), \nabla g(X^\theta)\rangle d\theta.
\]
\end{lem}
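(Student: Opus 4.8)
The plan is to interpolate between the two arguments of the covariance using the rotation $X^\theta=\cos\theta\,X+\sin\theta\,Y$ and then differentiate in $\theta$. I would define $\phi(\theta)=\E[f(X)\,g(X^\theta)]$ for $\theta\in[0,\tfrac\pi2]$. At the endpoints, $X^0=X$ gives $\phi(0)=\E[f(X)g(X)]$, while $X^{\pi/2}=Y$ together with the independence of $X$ and $Y$ and the fact that $Y$ is an i.i.d.\ copy of $X$ gives $\phi(\tfrac\pi2)=\E[f(X)]\,\E[g(Y)]=\E[f(X)]\,\E[g(X)]$. Hence $\phi(0)-\phi(\tfrac\pi2)$ equals $\cov(f(X),g(X))$, which also accounts for the first equality in the statement. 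By the fundamental theorem of calculus it then suffices to prove that $\phi'(\theta)=-\sin\theta\,\E\langle\nabla f(X),\nabla g(X^\theta)\rangle$.

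To compute $\phi'$, I would introduce the companion rotation $Y^\theta=-\sin\theta\,X+\cos\theta\,Y$, so that $(X,Y)\mapsto(X^\theta,Y^\theta)$ is orthogonal. By rotational invariance of the standard Gaussian, $(X^\theta,Y^\theta)$ is again a pair of independent standard Gaussian vectors, and inverting the rotation gives $X=\cos\theta\,X^\theta-\sin\theta\,Y^\theta$ and $Y=\sin\theta\,X^\theta+\cos\theta\,Y^\theta$. Since $\tfrac{d}{d\theta}X^\theta=Y^\theta$, we have $\tfrac{d}{d\theta}g(X^\theta)=\langle\nabla g(X^\theta),Y^\theta\rangle$, and differentiating under the expectation (licensed by the boundedness of $\nabla f,\nabla g$) yields $\phi'(\theta)=\E[f(X)\langle\nabla g(X^\theta),Y^\theta\rangle]$.

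The heart of the argument is a Gaussian integration by parts in the variable $Y^\theta$. Writing the inner product componentwise and using $f(X)=f(\cos\theta\,X^\theta-\sin\theta\,Y^\theta)$, for each coordinate $k$ the Stein identity $\E[Y^\theta_k\,\Psi]=\E[\partial_{Y^\theta_k}\Psi]$ applied to $\Psi=f(X)\,\partial_k g(X^\theta)$ gives $\E[f(X)\,\partial_k g(X^\theta)\,Y^\theta_k]=-\sin\theta\,\E[\partial_k f(X)\,\partial_k g(X^\theta)]$, because $\partial_{Y^\theta_k}f(X)=-\sin\theta\,\partial_k f(X)$ while $\partial_k g(X^\theta)$ is independent of $Y^\theta$. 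Summing over $k$ gives $\phi'(\theta)=-\sin\theta\,\E\langle\nabla f(X),\nabla g(X^\theta)\rangle$, and integrating over $[0,\tfrac\pi2]$ completes the proof.

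I expect the delicate points to be bookkeeping rather than conceptual: verifying that the rotation produces an independent standard Gaussian pair $(X^\theta,Y^\theta)$, and justifying both the differentiation under the expectation and the Gaussian integration by parts. All three are controlled by the hypothesis that $f$ and $g$ have bounded first derivatives, which supplies the integrable domination needed for dominated convergence and ensures the boundary terms in the integration by parts vanish.
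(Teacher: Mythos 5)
Your proof is correct and follows essentially the same route as the paper: interpolate along $X^\theta$, differentiate in $\theta$, and conclude by Gaussian integration by parts. The only difference is in the execution of that last step: the paper applies Stein's identity twice in the original variables $X$ and $Y$ and relies on the cancellation of two $\E\, f(X)\,\nabla\cdot\nabla g(X^\theta)$ terms, whereas your single integration by parts in the rotated coordinate $Y^\theta$ (using that $(X^\theta,Y^\theta)$ is an independent standard Gaussian pair) avoids those second-derivative terms altogether --- a slightly cleaner bookkeeping of the same computation.
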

In order to apply Lemma \ref{lem:Gaussint}, we need to represent $\bfx(t)$ as a function of certain Gaussian random variables. These Gaussian random variables are the increments of the Wiener processes driving $\bfx(t)$. To setup this connection, recall that one way to simulate \eqref{sys:SDE} is to perturb its ordinary differential equation (ODE) version with random noises. In particular, consider the ODE system
\begin{equation}
\label{sys:ODE}
d\bfx_i(t)=\bff_i(t,\bfx_{i-1}, \bfx_{i}, \bfx_{i+1}) dt+\frac1N\sum_{j=1}^N\bfh_i(t,\bfx_j) dt ,\quad i=1,\ldots, N.
\end{equation}
 Let $\Psi_h:\reals^{qN}\mapsto \reals^{qN}$ be the mapping from $\bfx(t)$ to $\bfx(t+h)$. We suppress the dependence of $\Psi_h$ on $t$, for notational simplicity.   Then one way to simulate \eqref{sys:SDE} is generate the following iterates
\begin{equation}
\label{eqn:update}
\bfX(0)=m_0+\Sigma_0 \bfW_0,\quad \bfX(n+1)=\Psi_h(\bfX(n))+\Sigma_h \bfW_n,\quad \Sigma_h=\sqrt{h}D_\Sigma.
\end{equation}
Then $\bfX(T)$ is a numerical approximation of $\bfx(t)$ if $Th=t$ when $T$ is large. A standard verification Lemma \label{lem:htozero} of this statement is provided in Appendix. We write the $i$-th block of $\bfX(n)$ as $\bfX_i(n)\in \reals^q$. $\bfW_n$ are $qN$ dimensional standard Gaussian noise for $n=0,1,\cdots,T$.

The strategy here is to consider $\bfX(T)$ instead of $\bfx(t)$, because its dependence on the noise realization $\bfW_i$ is easier to handle using Lemma \ref{lem:Gaussint}. We can show a similar result for $\bfX(T)$ as in Theorem \ref{thm:main}. Then in the limit $h\to 0$, we have our desired results. The details are shown in Appendix. As a remark, it might be possible and more elegant to represent the covariance of $\bfx_t$ directly using tools from Malliavin calculus \cite{Nua95}, and avoid the procedures of discretization and taking continuous limit. Yet the current proof strategy can be easier to understand for an applied math audience.

\subsection{Diffusion and mean field interaction}
While \eqref{eqn:covspace} provides a general upper bound, from its formulation is not easy to see the covariance decay we expect. To make the covariance decay explicit, we try to use instead a homogenous linear surrogate. We update the constant definition in \eqref{eqn:simhomo} to the spatial inhomogeneous setting:
\begin{equation}
\label{eqn:homo}
\begin{gathered}
\lambda_0:= \sup_{i,\bfx,s\leq t}\{\lambda_{max}\left(\nabla_{\bfx_i}\bff_i(s,\bfx_{i-1}, \bfx_{i}, \bfx_{i+1})+\nabla_{\bfx_i}\bff_i(s,\bfx_{i-1}, \bfx_{i}, \bfx_{i+1})^T\right)\},\\
\lambda_F:= \sup_{i,\bfx,s\leq t}\{\|\nabla_{\bfx_{i-1}}\bff_i(s,\bfx_{i-1}, \bfx_{i}, \bfx_{i+1})\|,\|\nabla_{\bfx_{i+1}}\bff_i(s,\bfx_{i-1}, \bfx_{i}, \bfx_{i+1})\|\},\\
\lambda_H:=\sup_{i,j, \bfx, s\leq t}\|\nabla_{\bfx_j} \bfh_i (s,\bfx_j)\|.
\end{gathered}
\end{equation}
They are finite under the conditions of Theorem \ref{thm:main}. We can build a homogeneous  linear surrogate by letting
\[
[F_*]_{i,i}=\lambda_0,\quad [F_*]_{i,i\pm 1}=\lambda_F,\quad [H_*]_{i,j}=\frac1N\lambda_H.
\]
Then the linear surrogate \eqref{sys:linearSDE} corresponds
\begin{equation}
\label{eqn:linearsurro}
d\bfx_i=(\lambda_0+2\lambda_F) \bfx_i dt+\lambda_F \Delta \bfx_i dt+\frac{\lambda_H}N \sum_{j=1}^N \bfx_j +\Sigma d \bfw_i(t),\quad i=1,\ldots, N,
\end{equation}
where the discrete Laplacian is given by
\[
\Delta \bfx_i:=\bfx_{i-1}+\bfx_{i+1}-2\bfx_i,
\]
which is closely related to the heat equation.

The spatial covariance of $\bfx(t)$ is built up by the diffusion effect of $\Delta \bfx_i$ and the mixing effect of mean field interaction $\frac1N \sum_{j=1}^N \bfx_j$. These two effects operate in different ways. The diffusion propagates the  diagonal entries in the covariance matrix to the off-diagonal ones, it contributes to the exponential decay term $e^{-\beta \bfd(i,j)}$ in Theorem \ref{thm:rough}. The mean field interaction synchronizes the components, and contributes to a global covariance of scale $\frac1N$.

We can establish the following explicit bounds in terms of $\lambda_G, \lambda_F, \lambda_H$, instead of the matrix $Q(t)$:
\begin{thm}
\label{thm:heat}
Under the same conditions of Theorem \ref{thm:main}, for any $\beta>0$, let
\[
\lambda_\beta=\lambda_0 + \lambda_F  (e^\beta+e^{-\beta}),\quad  \eta_{\beta}=\lambda_0 + \lambda_F  (e^\beta+e^{-\beta})+\lambda_H
\]
with $\lambda_0, \lambda_F$ and $\lambda_H$ defined by \eqref{eqn:homo}. Then with any function $g$,
\begin{align*}
|\text{cov} (g(\bfx_i (t)), g(\bfx_j(t)))|&\leq  2\sqrt{q}\|\nabla g\|^2_\infty\left( e^{\lambda_\beta t}\|\Sigma^2_0\|_F + (e^{\lambda_\beta t}-1)\|\Sigma^2\|_F/\lambda_\beta \right) e^{-\beta \bfd(i,j)}\\
&+\frac{2\sqrt{q}(1+e^{-\beta})\|\nabla g\|^2_\infty}{(1-e^{-\beta}) N }\left(\left( e^{\eta_{\beta} t} -e^{\lambda_\beta t}\right)\|\Sigma_0^2\|_F+
\left(\frac{e^{\eta_\beta t}-1}{\eta_\beta}-\frac{e^{\lambda_\beta t}-1}{\lambda_\beta}\right)\|\Sigma^2\|_F\right).
\end{align*}
\end{thm}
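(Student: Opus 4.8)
Since we are now in the spatially homogeneous setting \eqref{eqn:homo}, the dominating matrices are the constant circulant matrix $F_*$, carrying $\lambda_0$ on the diagonal and $\lambda_F$ on the two cyclic off-diagonals, and the rank-one mean-field matrix $H_* = \frac{\lambda_H}{N}\mathbf{1}\mathbf{1}^T$; in particular $G_* = F_* + H_*$ is symmetric, circulant and time independent, so $\int^t_{t-s}G_*\,dr = sG_*$ and everything reduces to controlling the entries of a single matrix exponential. The plan is to feed sharp entrywise bounds on $Q(s) = \exp(sG_*)\exp(sG_*)^T$ into the general estimate \eqref{eqn:covspace} of Theorem \ref{thm:main}, and then integrate in $s$.

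First I would establish the local decay lemma: for $\tau\ge 0$ and any $\beta>0$, the cyclic adjacency matrix $A$ (so that $F_* = \lambda_0\bfI + \lambda_F A$ and $A\mathbf{1} = 2\mathbf{1}$) satisfies
\[
[\exp(\tau A)]_{i,j}\le e^{\tau(e^\beta + e^{-\beta})}\,e^{-\beta\,\bfd(i,j)}.
\]
The clean route is to lift the cyclic walk to the integer line $\mathbb{Z}$, on which the vector $\psi(k)=e^{\beta k}$ is an exact eigenvector of the adjacency operator with eigenvalue $e^\beta+e^{-\beta}$; hence $\exp(\tau A_{\mathbb{Z}})\psi = e^{\tau(e^\beta+e^{-\beta})}\psi$, and since all entries of $\exp(\tau A_{\mathbb{Z}})$ are nonnegative, each single entry is dominated by the full weighted row sum, giving $[\exp(\tau A_{\mathbb{Z}})]_{0,m}\le e^{\tau(e^\beta+e^{-\beta})}e^{-\beta|m|}$. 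Projecting back to the torus and summing the geometric tails over the lifts $m\equiv j-i \pmod N$ returns the $N$-independent bound. Setting $\tau=s\lambda_F$ and multiplying by $e^{s\lambda_0}$ yields $[\exp(sF_*)]_{i,j}\le e^{s\lambda_\beta}\,e^{-\beta\,\bfd(i,j)}$, which is the source of both the exponential prefactor $e^{\lambda_\beta t}$ and the decay $e^{-\beta\bfd(i,j)}$.

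Next I would incorporate the mean-field matrix. Since $F_*$ and $H_*$ are both circulant they commute, and $(\mathbf{1}\mathbf{1}^T)^2 = N\,\mathbf{1}\mathbf{1}^T$ gives the exact identity $\exp(sH_*) = \bfI + \frac1N(e^{s\lambda_H}-1)\mathbf{1}\mathbf{1}^T$. Combined with $\exp(sF_*)\mathbf{1} = e^{s(\lambda_0+2\lambda_F)}\mathbf{1}$, this produces the decomposition
\[
[\exp(sG_*)]_{i,j} = [\exp(sF_*)]_{i,j} + \frac1N(e^{s\lambda_H}-1)\,e^{s(\lambda_0+2\lambda_F)}.
\]
The first summand is local and decaying by the lemma, while the second is spatially constant, of size $O(1/N)$, and since $\lambda_0 + 2\lambda_F\le\lambda_\beta\le\eta_\beta$ it is controlled by $\frac1N(e^{s\eta_\beta}-e^{s\lambda_\beta})$. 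Forming $Q(s)=\exp(sG_*)\exp(sG_*)^T$ and expanding, the purely local contribution keeps the factor $e^{-\beta\bfd(i,j)}$, whereas every cross term that pairs the local kernel with the constant mean-field piece gets summed against the uniform coupling; this is precisely where the geometric sum $\sum_{j}e^{-\beta\bfd(i,j)}\le \frac{1+e^{-\beta}}{1-e^{-\beta}}$ enters, explaining both the $\frac1N$ prefactor and the $\frac{1+e^{-\beta}}{1-e^{-\beta}}$ constant attached to the mean-field part of the stated bound.

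Finally I would substitute these two pieces into \eqref{eqn:covspace}: the $\|\Sigma^2_0\|_F\,[Q(t)]_{i,j}$ term produces the factors $e^{\lambda_\beta t}$ and $e^{\eta_\beta t}-e^{\lambda_\beta t}$ directly, while the $\|\Sigma^2\|_F\int_0^t[Q(s)]_{i,j}\,ds$ term yields the time integrals $\int_0^t e^{s\lambda_\beta}ds=(e^{\lambda_\beta t}-1)/\lambda_\beta$ together with its $\eta_\beta$ analogue, matching the claimed expression up to the displayed universal constants. The main obstacle is the decay lemma itself: one must obtain $e^{-\beta\bfd(i,j)}$ with a constant genuinely independent of $N$, which forces the careful treatment of the cyclic geometry (the lift to $\mathbb{Z}$ and the summation over winding numbers), since the radial weight $e^{\beta\bfd(i,\cdot)}$ is not compatible with the adjacency action at the base point $i$ on the torus, whereas its lift $e^{\beta k}$ on $\mathbb{Z}$ is an exact eigenvector. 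Keeping the mean-field cross terms cleanly at order $1/N$ without contaminating the leading local estimate is the second point requiring care.
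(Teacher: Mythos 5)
Your argument is correct and follows essentially the same route as the paper's: the paper likewise reduces everything to entrywise bounds on $\exp(sF_*)$ via a lift to $\mathbb{Z}$ --- phrased there as a continuous-time random walk whose exponential moment $\E e^{\beta Z_s}$ is computed by Dynkin's formula and then combined with Markov's inequality, which is exactly your weighted-row-sum/eigenvector estimate in probabilistic clothing --- together with the same exact formula for $\exp(sH_*)$, the commutativity of $F_*$ and $H_*$, and substitution into \eqref{eqn:covspace}. The only differences are bookkeeping: the paper works with $F_H=F_*-(\lambda_0+2\lambda_F)\bfI$ and carries an explicit factor of $2$ coming from $\Prob(|Z_s|\ge \bfd(i,j))\le 2\Prob(Z_s\ge \bfd(i,j))$, i.e.\ from the two tails of your geometric sum over winding numbers, which you would need to track to land exactly on the stated constants.
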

When there is only mean field interaction, i.e. $\lambda_F= 0$, the result can be simplified by choosing $\beta\to \infty$,
\begin{equation}\label{Bound1}
\begin{split}
|\text{cov} &(g(\bfx_i (t)), g(\bfx_j(t)))|\\&\leq \frac{2\sqrt{q}\|\nabla g\|^2_\infty}{ N }\left(\left( e^{(\lambda_0+\lambda_H) t} -e^{\lambda_0 t}\right)\|\Sigma_0^2\|_F+
\left(\frac{e^{ (\lambda_0+\lambda_H)t}-1}{\lambda_0+\lambda_H}-\frac{e^{\lambda_0 t}-1}{\lambda_0}\right)\|\Sigma^2\|_F\right).
\end{split}
\end{equation}
When there is only diffusion effect, that is $\bfh_i\equiv 0$, the result can be simplified as
\begin{equation}\label{Bound2}
|\text{cov} (g(\bfx_i (t)), g(\bfx_j(t)))|\leq  2\sqrt{q}\|\nabla g\|^2_\infty\left( e^{\lambda_\beta t}\|\Sigma^2_0\|_F + (e^{\lambda_\beta t}-1)\|\Sigma^2\|_F/\lambda_\beta \right) e^{-\beta \bfd(i,j)}.
\end{equation}
It is worth noticing the result holds for all $\beta>0$. This indicates the covariance structure actually decays faster than exponential. In practice, one can try to minimize the right hand side with respect to $\beta$, or directly compute the covariance of \eqref{eqn:linearsurro} to find the exact  local covariance structure. While sharper theoretical upper bound maybe obtainable by applying formulas as in \cite{Ise00}, they are complicated and work only asymptotically.
Our current upper bound is simpler, and sufficient for applications discussed in Sections \ref{sec:spaceavg} and \ref{sec:localDA}.

With Theorem \ref{thm:heat}, it is straightforward to show the spatial averaging estimator is consistent:
\begin{cor}
\label{cor:sampling}
Under the same settings of Theorem \ref{thm:heat},  the following bound holds for the block estimator \eqref{eqn:ghat}:
\begin{align}
\notag
\var \hat{g}_N&\leq \frac{2\|\nabla g\|^2_\infty}{(1-e^{-\beta}) N}  \left(2 e^{2\lambda_\beta t}\|\Sigma_0^2\|_F + (e^{2\lambda_\beta t}-1)\|\Sigma^2\|_F/\lambda_\beta \right)\\
\label{eqn:estbound}
&\quad+\frac{2\|\nabla g\|^2_\infty}{(1-e^{-\beta}) N }\left(\left( e^{\eta_{\beta} t} -e^{\lambda_\beta t}\right)\|\Sigma_0^2\|_F+
\left(\frac{e^{\eta_\beta t}-1}{\eta_\beta}-\frac{e^{\lambda_\beta t}-1}{\lambda_\beta}\right)\|\Sigma^2\|_F\right).
\end{align}
\end{cor}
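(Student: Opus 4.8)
The plan is to read off the variance directly from the spatial decomposition already recorded in \eqref{eqn:varsum}, namely $\var \hat{g}_N = \frac1N \sum_{j=1}^N \cov(g(\bfx_1(t)), g(\bfx_j(t)))$, and then substitute the pointwise covariance bound of Theorem \ref{thm:heat} term by term. First I would pass to absolute values, $\var \hat{g}_N \leq \frac1N \sum_{j=1}^N |\cov(g(\bfx_1(t)), g(\bfx_j(t)))|$, which is legitimate since $\var \hat g_N \ge 0$, and then split the resulting sum according to the two-part structure of the Theorem \ref{thm:heat} estimate: an exponentially decaying piece proportional to $e^{-\beta \bfd(1,j)}$ and a global piece proportional to $1/N$ that does not depend on $j$.

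For the decaying piece, the only genuinely non-routine computation is the cyclic geometric sum $\frac1N\sum_{j=1}^N e^{-\beta \bfd(1,j)}$. I would observe that on the torus each nonzero distance $k$ is attained by at most two indices $j$ (one on each side of $1$), so
\[
\sum_{j=1}^N e^{-\beta \bfd(1,j)} \leq 1 + 2\sum_{k=1}^\infty e^{-\beta k} = \frac{1+e^{-\beta}}{1-e^{-\beta}} \leq \frac{2}{1-e^{-\beta}}.
\]
Multiplying by the $j$-independent amplitude $2\sqrt{q}\|\nabla g\|_\infty^2\left(e^{\lambda_\beta t}\|\Sigma_0^2\|_F + (e^{\lambda_\beta t}-1)\|\Sigma^2\|_F/\lambda_\beta\right)$ from Theorem \ref{thm:heat} and dividing by the leading $N$ from \eqref{eqn:varsum} produces the first line of \eqref{eqn:estbound}, after consolidating constants (for instance bounding $1+e^{-\beta}\le 2$ and, if the stated form is desired, using $e^{\lambda_\beta t}\le e^{2\lambda_\beta t}$ for $\lambda_\beta\ge 0$).

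For the global piece, the bracketed quantity in the second term of Theorem \ref{thm:heat} is independent of $j$, so averaging over $j$ is trivial: $\frac1N\sum_{j=1}^N (\cdot/N) = (\cdot)/N$, and this simply reproduces the global term, yielding the second line of \eqref{eqn:estbound}. Combining the two contributions gives the claimed bound.

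The main obstacle here is modest, as this is essentially a corollary of Theorem \ref{thm:heat}: the substance is entirely in the cyclic geometric summation above and in the bookkeeping needed to collapse the constants $\sqrt{q}$, $(1+e^{-\beta})$, $\frac{2}{1-e^{-\beta}}$, and the exponential factors into the stated closed form. The one point requiring a little care is the diagonal index $j=1$, where $\bfd(1,1)=0$ and the covariance is the genuine variance $\var(g(\bfx_1(t)))$; since the Theorem \ref{thm:heat} estimate is valid for all pairs including $i=j$, this term is already controlled by evaluating the exponential bound at distance zero and is absorbed into the $j=1$ summand, so no separate treatment is needed.
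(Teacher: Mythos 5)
Your proposal is correct and follows exactly the paper's own (one-line) proof: substitute the Theorem \ref{thm:heat} bound into \eqref{eqn:varsum} and control the cyclic sum by $\sum_{j=1}^N e^{-\beta\bfd(1,j)}\le 2\sum_{k=0}^\infty e^{-\beta k}=\frac{2}{1-e^{-\beta}}$, the global $1/N$ term being independent of $j$. You are in fact more careful than the paper about the residual constant bookkeeping (the $\sqrt{q}$, the $(1+e^{-\beta})$ factor, and the $e^{\lambda_\beta t}$ versus $e^{2\lambda_\beta t}$ discrepancy), which the paper's stated form does not reconcile explicitly.
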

\begin{proof}
Since in \eqref{eqn:varsum}, $\sum^N_{j=1} e^{-\beta \bfd(1,j)}\leq 2\sum^{\infty}_{j=0}e^{-\beta j}=\frac{2}{1-e^{-\beta}}$.
\end{proof}

\subsection{Long time stability}
\label{sec:longtime}
In many applications, the long time $t\to \infty$ scenario is of interest. For example, if we want to find the equilibrium measure of $\bfx(t)$, we often simulate $\bfx(t)$ for a long time, and use its distribution as an approximate. Likewise, if we are interested in data assimilation with models as \eqref{sys:cyclicSDE}, the algorithms are usually iterated for an extended period. In order to apply the spatial averaging strategy in Section \ref{sec:spaceavg}, and the localization strategy in Section \ref{sec:localDA} for these operations, we need the local covariance structure to be stable in time.

In the view of Theorem \ref{thm:heat}, we can guarantee the local covariance structure is stable if $\lambda_0+2\lambda_F+\lambda_H<0$. In particular we have the following corollary by letting $t\to \infty$ in the estimate.
\begin{cor}
\label{cor:longtime}
If $\lambda_0+\lambda_H+2\lambda_F<0$, then there is a $\beta_0>0$, so that if $0<\beta<\beta_0$, $\lambda_\beta\leq \eta_\beta<0$ we have
\begin{align*}
\limsup_{t\to \infty} |\text{cov} (g(\bfx_i (t)), g(\bfx_j(t)))| &\leq  -\sqrt{q}\|\nabla g\|^2_\infty \|\Sigma^2\|_F\left(\frac{2}{\lambda_\beta} e^{-\beta \bfd(i,j)}-\frac{2(1+e^{-\beta})}{(1-e^{-\beta}) N }
\left(\eta^{-1}_\beta-\lambda^{-1}_\beta\right)\right).
\end{align*}
\end{cor}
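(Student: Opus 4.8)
The plan is to obtain the corollary by sending $t\to\infty$ directly in the bound of Theorem \ref{thm:heat}, after first securing a range of admissible $\beta$ on which both exponents $\lambda_\beta$ and $\eta_\beta$ are negative. First I would establish the existence of $\beta_0$. Recall $\eta_\beta=\lambda_0+\lambda_F(e^\beta+e^{-\beta})+\lambda_H$; since $\lambda_F\geq 0$ and $\lambda_H\geq 0$ (both are suprema of operator norms) and $e^\beta+e^{-\beta}=2\cosh\beta$ is continuous and nondecreasing on $[0,\infty)$ with value $2$ at $\beta=0$, the map $\beta\mapsto\eta_\beta$ is continuous and nondecreasing with $\eta_\beta\to\lambda_0+2\lambda_F+\lambda_H<0$ as $\beta\downarrow 0$ by the standing hypothesis. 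Hence there is a $\beta_0>0$ (the root of $\eta_\beta=0$ when $\lambda_F>0$, and $\beta_0=\infty$ when $\lambda_F=0$) such that $\eta_\beta<0$ for all $0<\beta<\beta_0$. Because $\lambda_\beta=\eta_\beta-\lambda_H$ and $\lambda_H\geq 0$, we automatically get $\lambda_\beta\leq\eta_\beta<0$ on this range, which is exactly the condition recorded in the statement.

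Next I would carry out the limit computation. With $\beta\in(0,\beta_0)$ fixed, so that $\lambda_\beta<0$ and $\eta_\beta<0$, I send $t\to\infty$ on the right-hand side of Theorem \ref{thm:heat}. Every exponential $e^{\lambda_\beta t}$ and $e^{\eta_\beta t}$ tends to $0$, so all terms carrying the initial covariance $\|\Sigma_0^2\|_F$ vanish—the equilibrium bound forgets the initial data. For the driving-noise contributions I use $(e^{\lambda_\beta t}-1)/\lambda_\beta\to-1/\lambda_\beta$ together with the analogous limit for $\eta_\beta$. Since the inequality of Theorem \ref{thm:heat} holds for \emph{every} $t$, taking $\limsup_{t\to\infty}$ on the left and the (existing) limit on the right gives the claimed equilibrium estimate: the diffusive term converges to $-\tfrac{2}{\lambda_\beta}\sqrt{q}\|\nabla g\|_\infty^2\|\Sigma^2\|_F\,e^{-\beta\bfd(i,j)}$, while the mean-field term converges to an $O(1/N)$ expression built from $\lambda_\beta^{-1}$ and $\eta_\beta^{-1}$.

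The step is essentially routine limit-taking; no estimate beyond Theorem \ref{thm:heat} is needed. The one genuinely delicate point, and the place I would be most careful, is the sign bookkeeping for the reciprocals of the negative exponents: because $\lambda_\beta\leq\eta_\beta<0$, passing to reciprocals reverses the ordering to $\lambda_\beta^{-1}\geq\eta_\beta^{-1}$, so the surviving mean-field remainder $\lambda_\beta^{-1}-\eta_\beta^{-1}$ is nonnegative and correctly contributes a global $O(1/N)$ floor to the limiting covariance, with the diffusive piece $-2/\lambda_\beta>0$ supplying the exponential-in-distance decay. I would then verify that the two surviving constants assemble exactly into the prefactor displayed in the statement, and remark that keeping $\beta$ strictly below $\beta_0$ is what guarantees finiteness of the bound, so that one may optimize over $\beta\in(0,\beta_0)$ to extract the sharpest decay rate together with the $1/N$ global term.
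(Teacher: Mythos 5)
Your proposal is correct and takes exactly the paper's route: the corollary is obtained with no separate argument, simply ``by letting $t\to\infty$ in the estimate'' of Theorem \ref{thm:heat} after choosing $\beta_0$ so that $\lambda_\beta\leq\eta_\beta<0$, which is precisely what you do. Your sign bookkeeping is in fact more careful than the printed statement: the direct limit produces the nonnegative mean-field remainder $\lambda_\beta^{-1}-\eta_\beta^{-1}$, so the factor $\left(\eta_\beta^{-1}-\lambda_\beta^{-1}\right)$ appearing in the displayed corollary carries what looks like a typographical sign flip that your computation implicitly corrects.
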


%

\section{Test Examples}\label{Sec:Numerics}
In this section, we demonstrate our theoretical findings with several numerical test examples.
\subsection{A linear model}
We start with a linear model, where the analytic solutions associated with this linear model can be written down explicitly (See Appendix \ref{app:linear}). Therefore, this model is able to provide insights and validation of the theories developed in the previous sections.

The linear model is as follows:
\begin{equation}\label{linear_model}
  \frac{du_i}{dt} = -a u_i + d_u(u_{i+1}-2u_i + u_{i-1}) + w(\bar{u} - u_i) + \sigma_u\dot{W}_{u_i}, \qquad \mbox{for~} i = 1,\ldots, N,
\end{equation}
with periodic boundary conditions. Here $\bar{u}$ is the averaged value of all the state variables,
\begin{equation*}
  \bar{u} = \frac{1}{N}\sum_{i=1}^N u_i.
\end{equation*}
In the linear model \eqref{linear_model}, the coefficients of damping $a$, diffusion $d_u$, mean field interaction $w$ and the stochastic noise $\sigma_u$ are all constants. Thus, the flow field is statistically homogeneous. Below, we fix
\begin{equation}\label{linear_model_fixed_coefficients}
  a = 1, \qquad\mbox{and}\qquad \sigma_u = 0.5
\end{equation}
while the diffusion and mean field interaction coefficients $d_u$ and $w$ vary in different cases.

In the linear model \eqref{linear_model}, it is easy to write down the constants in \eqref{eqn:homo},
\begin{equation}\label{linear_model_constant1}
  \lambda_0 = -a - 2d_u - w,\qquad \lambda_F = d_u,\qquad\mbox{and}\qquad \lambda_H = w.
\end{equation}
In addition, only one-layer model is utilized here and we consider the covariance of different $u_i$ themselves. Therefore,
\begin{equation}\label{linear_model_constant2}
    q = 1\qquad\mbox{and}\qquad\nabla g = 1.
\end{equation}
Furthermore, $\|\Sigma^2\|_F = \sigma_u^2$. Below, we study the role of the mean field interaction and the diffusion. All the results shown in this subsection are computed using the exact solution given in Appendix \ref{app:linear}, which allows us to understand Theorem \ref{thm:heat} without being interfered by numerical errors. \medskip\medskip

\noindent\textbf{1. Linear model with only mean field interaction.} First, we study the case with only mean field interaction. In other words, we set the diffusion coefficient $d_u = 0$. The constants in \eqref{linear_model_constant1} reduce to
\begin{equation}\label{linear_model_constant1_mean_field}
  \lambda_0 = -a  - w,\qquad \lambda_F = 0,\qquad\mbox{and}\qquad \lambda_H = w.
\end{equation}
According to \eqref{Bound1} in Theorem \ref{thm:heat}, the covariance bound is proportional to $1/N$ and the coefficient is provided by letting $\beta\to\infty$. Therefore, assuming the initial covariance is zero, we have the following result
\begin{equation}\label{bounds_mean_field}
  |\text{cov} (g(\bfx_i (t)), g(\bfx_j(t)))|\leq 2\left(\frac{e^{-at}-1}{-a}-\frac{e^{-(a+w)t}-1}{-a-w}\right)\sigma_u^2\frac{1}{N}
\end{equation}
In Figure \ref{Gaussian_Hov_MeanField_Hov}, we show the covariance between $u_1$ and $u_i$ for $i=1,\ldots, N$ at $t=5$. Here the initial conditions are zero everywhere. Different columns show the covariance with different $N$. The spatiotemporal simulations are also included to provide intuitions. Without the diffusion and the initial covariance, the covariance between $u_1$ and all other $u_i$ with $i\neq1$ has the same non-zero value due to the global effect from the mean field interaction.

In Figure \ref{Gaussian_Hov_MeanField_Curve}, the dependence of the covariance between $u_1$ and $u_2$ as a function of $N$ is shown. It is clear that the covariance decays as a function of $1/N$, which validates the results in Theorem \ref{thm:heat} (and that in \eqref{bounds_mean_field}). The black dashed line in panel (b) (log-log scale) shows the bound in \eqref{bounds_mean_field}, where the constant $C_1$ is given by the coefficient on the right hand side of \eqref{bounds_mean_field} in front of $1/N$.
\begin{figure}[!h]
\centering
\hspace*{-2cm}
{\includegraphics[width=20cm]{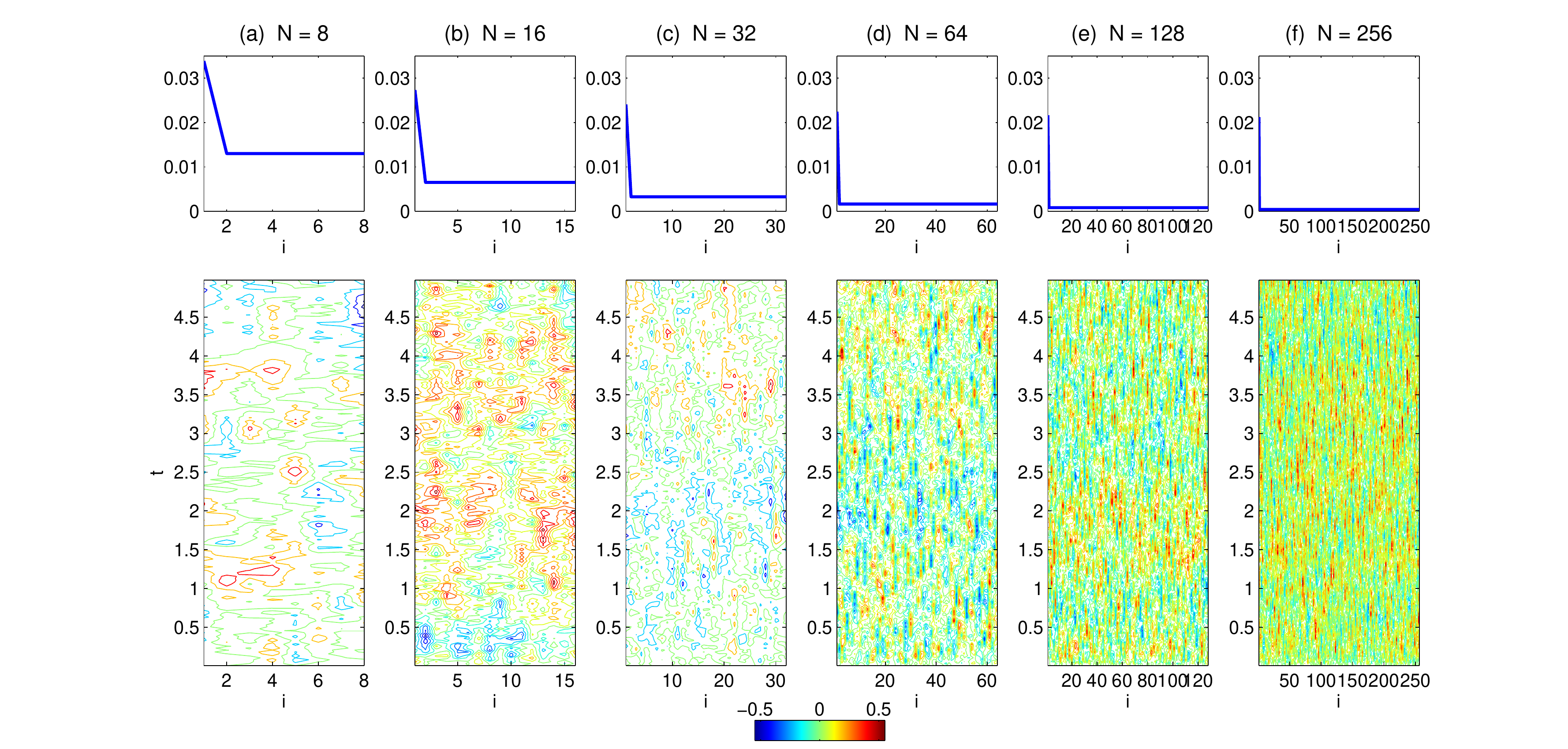}}
\caption{Linear model with only mean field interaction. The covariance between $u_1$ and $u_i$ for $i=1,\ldots, N$ at $t=5$. The spatiotemporal simulations are also included.}\label{Gaussian_Hov_MeanField_Hov}
\end{figure}

\begin{figure}[!h]
\centering
\hspace*{-2cm}
{\includegraphics[width=20cm]{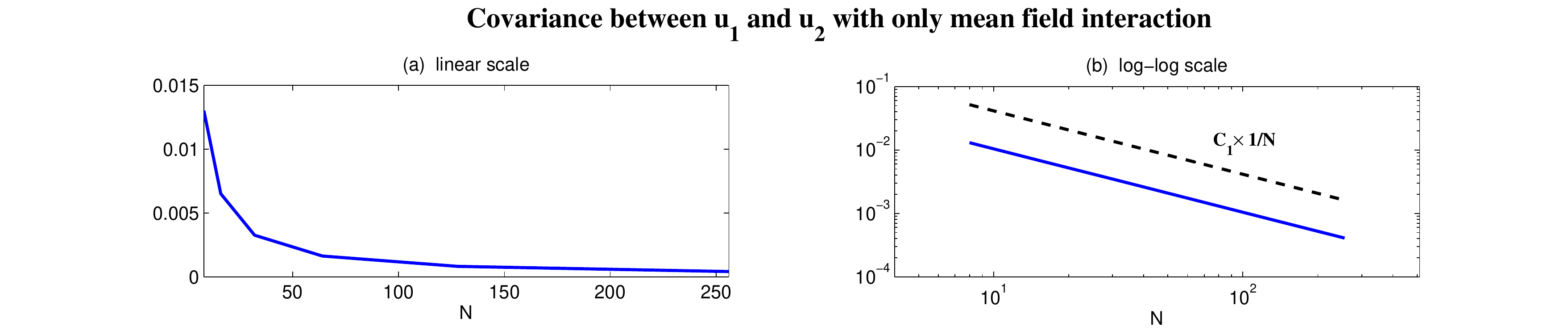}}
\caption{Linear model with only mean field interaction. The dependence of the covariance between $u_1$ and $u_2$ as a function of $N$. The black dashed line in panel (b) (log-log scale) shows the bound in \eqref{bounds_mean_field}, where the constant $C_1$ is given by the coefficient on the right hand side of \eqref{bounds_mean_field} in front of $1/N$.}\label{Gaussian_Hov_MeanField_Curve}
\end{figure}

\noindent\textbf{2. Linear model with only diffusion.} Next, we study the case with only diffusion. In other words, we set the mean field interaction coefficient $w = 0$. The constants in \eqref{linear_model_constant1} reduce to
\begin{equation}\label{linear_model_constant1_diffusion}
  \lambda_0 = -a  - 2d_u,\qquad \lambda_F = d_u,\qquad\mbox{and}\qquad \lambda_H = 0.
\end{equation}
According to \eqref{Bound2} in Theorem \ref{thm:heat}, the covariance bound is proportional to $e^{-\beta\mathbf{d}(i,j)}$. Again, assuming the initial covariance is zero, we have the following result
\begin{equation}\label{bounds_diffusion}
  |\text{cov} (g(\bfx_i (t)), g(\bfx_j(t)))|\leq \frac{2(e^{\lambda_\beta t}-1)\sigma_u^2}{\lambda_\beta}e^{-\beta\mathbf{d}(i,j)},
\end{equation}
where
\begin{equation*}
  \lambda_\beta = \lambda_0 + \lambda_F (e^\beta + e^{-\beta})
\end{equation*}
In Figure \ref{Gaussian_Hov_Diffusion_Hov}, we show the covariance between $u_1$ and $u_i$ for $i=1,\ldots, N$ at $t=5$ with $N=64$ fixed. Here the initial conditions are zero everywhere. Different columns show the covariance with different strengths of the diffusion coefficient $d_u$. The spatiotemporal simulations are also included to provide intuitions. Without the mean field interaction and the initial covariance, the covariance between $u_1$ and $u_i$ for $u_i$ being far from $u_1$ decays to zero, reflecting the local contribution due to only the diffusion (without mean field interaction). On the other hand, the increase of $d_u$ allows the increase of the covariance between $u_1$ and $u_i$ for $i$ far from $1$. This is clearly illustrated in the spatiotemporal patterns and is consistent with our intuition as well.

Figure \ref{Gaussian_Hov_Diffusion_Curve} shows the covariance between $u_1$ and $u_i$ for $i = 1,\ldots, N/2$ with $d_u=20$ and $N=64$ at $t=5$. In panel (b), the logarithm scale is shown, which indicates a linear dependence of the log covariance on the distance $\mathbf{d}(1,i)$. This validates the exponential decay of the covariance as a function of the distance as shown in \eqref{bounds_diffusion}. The black dashed line shows the bound in \eqref{bounds_diffusion}, where the constant $C_2$ is given by the coefficient on the right hand side of \eqref{bounds_diffusion} in front of $e^{-\beta\mathbf{d}(i,j)}$ by taking $\beta=1/5$. We have tested other $\beta$, with which the bounds all above the actual covariance. Therefore, we validate the theoretical results with only diffusion, described by \eqref{Bound2}.

\begin{figure}[!h]
\centering
\hspace*{-2cm}
{\includegraphics[width=20cm]{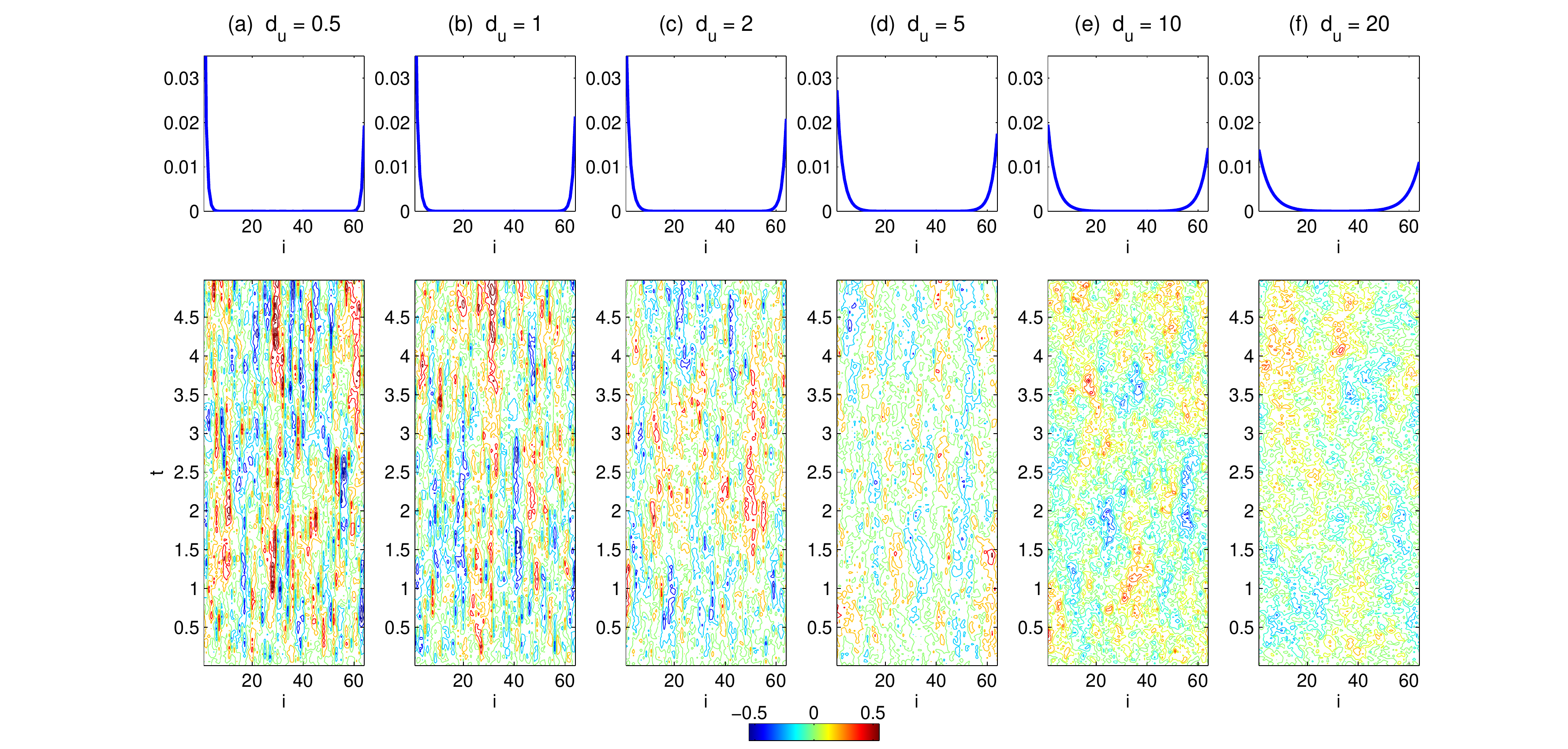}}
\caption{Linear model with only diffusion. The covariance between $u_1$ and $u_i$ for $i=1,\ldots, N$ at $t=5$ with $N=64$ fixed. }\label{Gaussian_Hov_Diffusion_Hov}
\end{figure}

\begin{figure}[!h]
\centering
\hspace*{-2cm}
{\includegraphics[width=20cm]{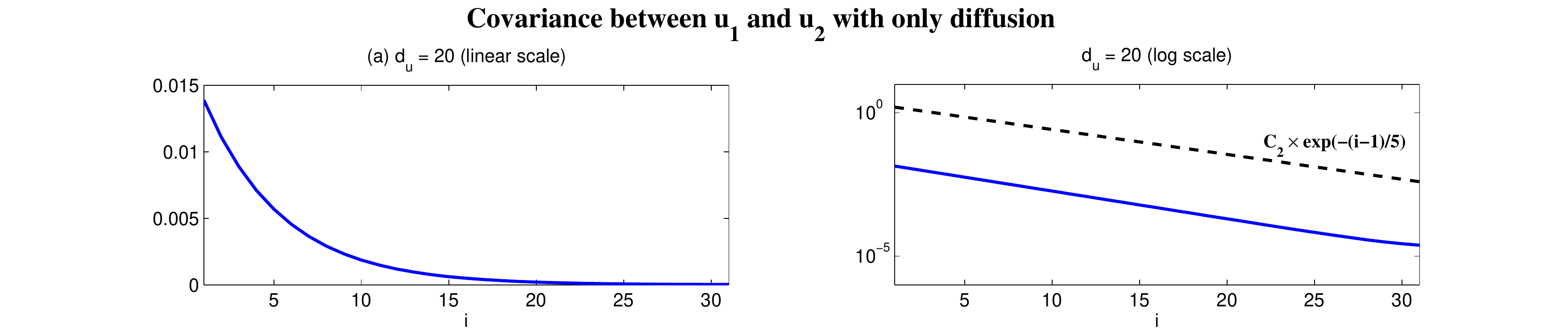}}
\caption{Linear model with only diffusion. The covariance between $u_1$ and $u_i$ for $i = 1,\ldots, N/2$ with $d_u=20$ and $N=64$ at $t=5$. The black dashed line in panel (b) (logarithm scale) shows the bound in \eqref{bounds_diffusion}, where the constant $C_2$ is given by the coefficient on the right hand side of \eqref{bounds_diffusion} in front of $e^{-\beta\mathbf{d}(i,j)}$ by taking $\beta=1/5$. }\label{Gaussian_Hov_Diffusion_Curve}
\end{figure}

\noindent\textbf{3. Linear model with both the mean field interaction and diffusion.} Finally, we study the case with both mean field interaction and diffusion. Figure \ref{Gaussian_Hov_Both_Hov} is similar to Figure \ref{Gaussian_Hov_Diffusion_Hov} but with a non-zero mean field interaction $w=5$. Therefore, unlike the results in Figure \ref{Gaussian_Hov_Both_Hov}, the covariance between $u_1$ and $u_i$ where $i$ is far from $1$ is non-zero due to the mean field interaction that has a global impact. In Figure \ref{Gaussian_Hov_Both_Curve}, it is also clear that when $i$ increases the covariance between $u_i$ and $u_1$ first experience an exponential decay and then the covariance remains as a constant. Thus, the exponential decay is the main contribution to the covariance behavior for grid points that are close to $i=1$ while the mean field interaction plays the dominant role at the location that is far away. These simulations are consistent with the theoretical results shown in Theorem \ref{thm:heat}.

\begin{figure}[!h]
\centering
\hspace*{-2cm}
{\includegraphics[width=20cm]{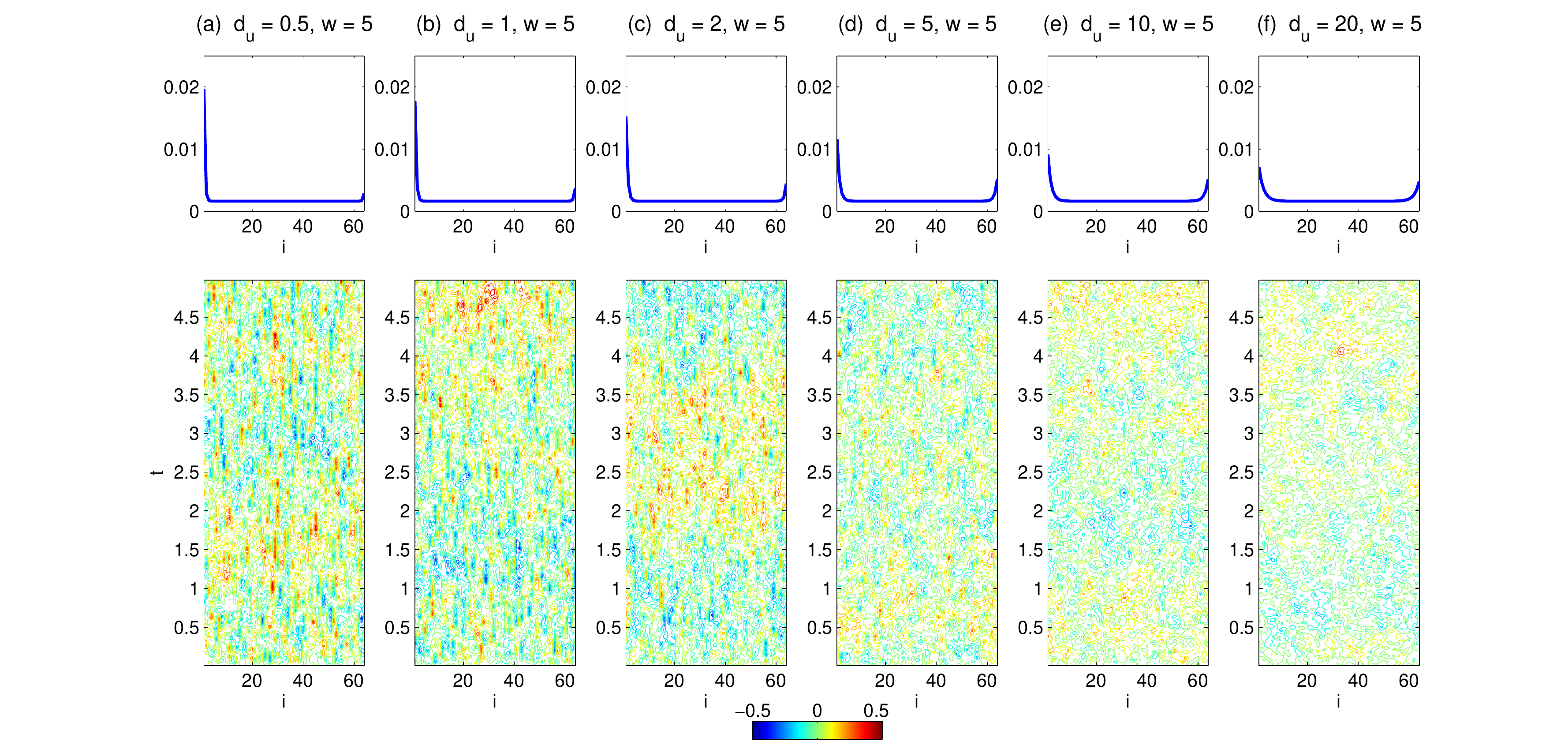}}
\caption{Linear model with both mean field interaction and diffusion. The covariance between $u_1$ and $u_i$ for $i=1,\ldots, N$ at $t=5$ with $N=64$ fixed and the mean field interaction $w=5$. }\label{Gaussian_Hov_Both_Hov}
\end{figure}

\begin{figure}[!h]
\centering
\hspace*{-2cm}{\includegraphics[width=20cm]{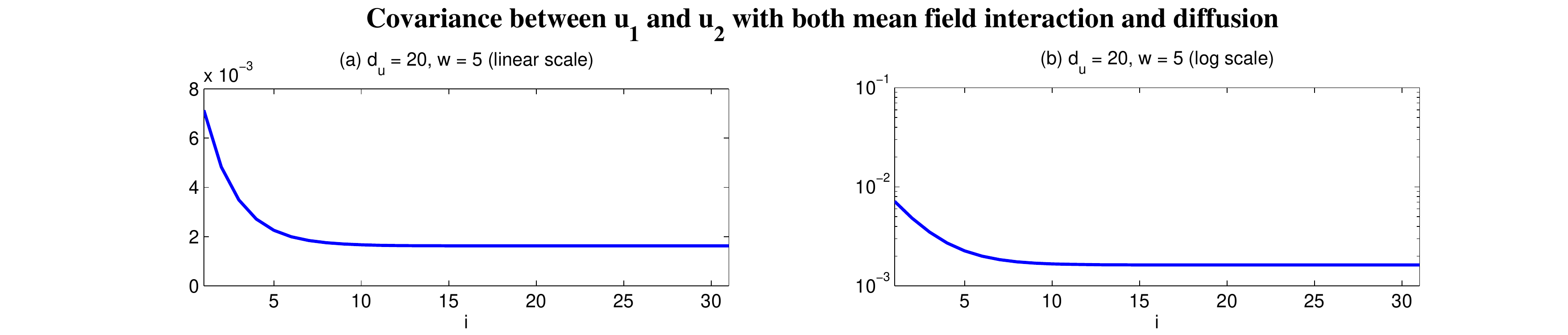}}
\caption{Linear model with both mean field interaction and diffusion. The covariance between $u_1$ and $u_i$ for $i = 1,\ldots, N/2$ with $d_u=20, w=5$ and $N=64$ at $t=5$. The local interaction is the main contributor to the covariance behavior at the beginning while the mean field interaction plays the dominant role at the location that is far away.}\label{Gaussian_Hov_Both_Curve}
\end{figure}
\clearpage

\subsection{A stochastically coupled FHN model}
In this subsection, we explore the covariance structure in a stochastically coupled FitzHugh-Nagumo (FHN) model, which is a nonlinear model that describes activation and deactivation dynamics of spiking neurons \cite{lindner2004effects, CM17pnas}.   The model reads,
\begin{equation}\label{FHN_model}
\begin{split}
  \epsilon du_i &= \left(u_i -\frac{1}{3} u_i^3 - v_i + d_u(u_{i+1}-2u_i + u_{i-1}) + w(\bar{u} - u_i) \right) dt+ \sqrt{\epsilon} \delta_1 dW_{u_i}, \\
  dv_i &= (u_i + a)dt+ \delta_2 dW_{v_i},\qquad\mbox{for~} i = 1,\ldots, N,
\end{split}
\end{equation}
where $u_i$ and $v_i$ are activator and inhibitor variables. Periodic boundary conditions are imposed on $u_i$ variables. In \eqref{FHN_model}, the timescale ratio $\epsilon = 0.01\ll 1$ leads to a slowfast structure of the model. The parameter $a=1.05>1$ such that the system has a global attractor in the absence of noise and diffusion \cite{guckenheimer2013nonlinear}. The random noise is able to drive the system above the threshold level of global stability and triggers limit cycles intermittently. Note that with $N=1$, the model reduces to the classical FHN model with a single neuron and it contains the model families with both coherence resonance and self-induced stochastic resonance \cite{deville2005two}. With different choices of the noise strength $\delta_1, \delta_2$
and the diffusion coefficient $d_u$, the system in \eqref{FHN_model} exhibits rich dynamical behaviors. Finally, in \eqref{FHN_model}, the variable $\bar{u} = \frac{1}{N}\sum_{i=1}^Nu_i$ is the mean field interaction.

The goal of this subsection is to study the spatial structure of the covariance due to both the local effect (i.e., diffusion) and the global effect (i.e., mean field interaction).

\subsubsection{Basic properties}
For the convenience of stating the theoretical results, we  slightly change of notation of \eqref{FHN_model} in this paragraph,
\begin{equation}
\label{eqn:FHNmeanfield}
\begin{gathered}
 du_i=\left(\frac1\epsilon u_i- \frac1{3\epsilon} u_i^3-\frac{1}{\sqrt{\epsilon}}v^\epsilon_i+\frac{d_u}{\epsilon}(u_{i+1}+u_{i-1}-2u_i)+\frac{\tilde{w}}{\epsilon}(\bar{u}-u_i)\right)dt+\frac1{\sqrt{\epsilon}}\delta_1 dW_{u_i},\\
dv^\epsilon_i=\left(\frac1{\sqrt{\epsilon}}u_i+ \frac a{\sqrt{\epsilon}}\right) dt+\frac{\delta_2}{\sqrt{\epsilon}} dW_{v_i}.
\end{gathered}
\end{equation}

To apply the framework we developed, it is natural to let $\bfx_i=(u_i,v_i^\epsilon)^T$ such that $q=2$, and
\begin{equation}
\label{tmp:FHNhessian}
\nabla_{\bfx_i} \bff(t,\bfx_{i-1},\bfx_i,\bfx_{i+1})=\begin{bmatrix} \epsilon^{-1}- \epsilon^{-1}u_i^2- 2 \epsilon^{-1}d_u- \epsilon^{-1}\tilde{w} &-\epsilon^{-1/2}\\
\epsilon^{-1/2} &0
   \end{bmatrix}
\end{equation}
and
\[
\nabla_{\bfx_{i\pm 1}} \bff(t,\bfx_{i-1},\bfx_i,\bfx_{i+1})=\begin{bmatrix}   \epsilon^{-1}d_u &0 \\
0 &0
   \end{bmatrix},
  \quad
  \nabla_{\bfx_i} h=\begin{bmatrix}  \tilde{w} &0 \\
0 &  0
   \end{bmatrix},
  \quad
   \Sigma=
   \begin{bmatrix}   \epsilon^{-1/2}\delta_1 &0 \\
0 &  \epsilon^{-1/2} \delta_2
   \end{bmatrix}.
\]
Since the $-\epsilon^{-1} u_i^2$ is always negative, we can take
\[
\lambda_0=\epsilon^{-1}\max\{1-2d_u-\tilde{w},0\}, \quad \lambda_F=\epsilon^{-1} d_u,\quad \lambda_H=\epsilon^{-1} \tilde{w}.
\]
Therefore, using the spatial averaging sampling strategy for high dimensional systems, we have
\begin{cor}
\label{cor:FHN}
The block average estimator is consistent for the FHN system \eqref{eqn:FHNmeanfield}. In particular, estimate \eqref{eqn:estbound} holds with
$\|\Sigma^2\|_F=\sqrt{\delta_1^4+\delta^4_2}/\epsilon $, and
\[
\eta_\beta=\epsilon^{-1}\max\{1-2d_u, \tilde{w}\}+2\epsilon^{-1} d_u(e^\beta+e^{-\beta}),\quad \lambda_\beta=\eta_\beta-\epsilon^{-1}\tilde{w}.
\]
\end{cor}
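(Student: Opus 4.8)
The plan is to read Corollary \ref{cor:FHN} as the specialization of Corollary \ref{cor:sampling} to the FHN system \eqref{eqn:FHNmeanfield}; once the latter is cast in the form \eqref{sys:SDE} and the homogeneous constants $\lambda_0,\lambda_F,\lambda_H$ of \eqref{eqn:homo} are identified, the stated bound follows by direct substitution. First I would set $\bfx_i=(u_i,v^\epsilon_i)^T$ with $q=2$, assign the diffusion, the cubic reaction, and the self-interaction $-\epsilon^{-1}\tilde w\,u_i$ to the local drift $\bff_i$, and assign the genuine averaging term $\epsilon^{-1}\tilde w\,\bar u=\frac1N\sum_j \epsilon^{-1}\tilde w\, u_j$ to the mean-field drift $\frac1N\sum_j\bfh_i$. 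This produces exactly the block Jacobians recorded in \eqref{tmp:FHNhessian}, so \eqref{eqn:FHNmeanfield} becomes an instance of \eqref{sys:SDE} and Theorem \ref{thm:heat} together with Corollary \ref{cor:sampling} apply.

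The crux is verifying that the suprema in \eqref{eqn:homo} are finite and independent of $N$, which is not automatic because the FHN nonlinearity $u-\tfrac13u^3$ is not globally Lipschitz. The key observation is that the only state-dependent entry of the symmetrized local Jacobian $\nabla_{\bfx_i}\bff+\nabla_{\bfx_i}\bff^T$ is the cubic contribution $-\epsilon^{-1}u_i^2$, which is always nonpositive; hence its largest eigenvalue is maximized at $u_i=0$, and the supremum over the state is attained there, yielding the finite value $\lambda_0=\epsilon^{-1}\max\{1-2d_u-\tilde w,0\}$. The coupling blocks $\nabla_{\bfx_{i\pm1}}\bff$ and the interaction Jacobian $\nabla_{\bfx_j}\bfh_i$ are constant matrices supported on the activator entry, so their operator norms give $\lambda_F=\epsilon^{-1}d_u$ and $\lambda_H=\epsilon^{-1}\tilde w$ with no state dependence. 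I regard this sign argument for the cubic term as the single genuine obstacle: it is what makes the domination hypotheses of Theorem \ref{thm:main} hold uniformly in $N$ despite the superlinear reaction.

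The remainder is bookkeeping. From $\Sigma=\mathrm{diag}(\epsilon^{-1/2}\delta_1,\epsilon^{-1/2}\delta_2)$ I get $\Sigma^2=\mathrm{diag}(\epsilon^{-1}\delta_1^2,\epsilon^{-1}\delta_2^2)$ and therefore $\|\Sigma^2\|_F=\epsilon^{-1}\sqrt{\delta_1^4+\delta_2^4}$. Substituting $\lambda_0,\lambda_F,\lambda_H$ into the definitions $\lambda_\beta=\lambda_0+\lambda_F(e^\beta+e^{-\beta})$ and $\eta_\beta=\lambda_\beta+\lambda_H$ of Theorem \ref{thm:heat}, and using the elementary identity $\max\{1-2d_u-\tilde w,0\}+\tilde w=\max\{1-2d_u,\tilde w\}$ to fold $\lambda_H$ into the maximum, reproduces the stated expressions for $\eta_\beta$ and $\lambda_\beta$. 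Feeding these constants into Corollary \ref{cor:sampling} yields \eqref{eqn:estbound}; since the right-hand side is $O(1/N)$ with constants independent of $N$, the variance of the block average tends to $0$ as $N\to\infty$, which is exactly the asserted consistency.
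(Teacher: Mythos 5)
Your proposal is correct and follows essentially the same route as the paper: the paper's entire ``proof'' of Corollary~\ref{cor:FHN} is the preceding computation of the block Jacobians \eqref{tmp:FHNhessian}, the observation that $-\epsilon^{-1}u_i^2\le 0$ makes the suprema in \eqref{eqn:homo} finite with $\lambda_0=\epsilon^{-1}\max\{1-2d_u-\tilde w,0\}$, $\lambda_F=\epsilon^{-1}d_u$, $\lambda_H=\epsilon^{-1}\tilde w$, followed by substitution into Theorem~\ref{thm:heat} and Corollary~\ref{cor:sampling}, exactly as you do. One small point: your (correct) substitution gives $\eta_\beta=\epsilon^{-1}\max\{1-2d_u,\tilde w\}+\epsilon^{-1}d_u(e^\beta+e^{-\beta})$, whereas the corollary prints an extra factor $2$ on the $d_u(e^\beta+e^{-\beta})$ term, so you do not literally ``reproduce the stated expressions''; this appears to be an overcount in the paper's statement rather than a gap in your argument, and since the bound \eqref{eqn:estbound} is monotone increasing in $\eta_\beta$ and $\lambda_\beta$, the printed constants remain valid (just weaker) upper bounds.
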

In particular, from the bound \eqref{eqn:estbound}, we learn that the estimator consistency deteriorates with larger $t$, $d_u$ and $\tilde{w}$, which correspond to longer simulation time, stronger local interaction, and stronger mean field interaction regimes.

\subsubsection{The FHN model with different diffusion and mean field interaction coefficients}
\noindent\textbf{1. FHN model with only diffusion}.
First, we focus on the situation in the FHN model where the mean field interaction is zero ($w=0$). Therefore, the covariance comes purely from the local effect, namely the diffusion. We set $\delta_1 = \delta_2 = 0.4$ but choose different values of the diffusion parameter $d_u$ which provides different dynamical behavior:
\begin{equation}\label{FHN_Diffusion}
\begin{split}
  \mbox{(a) Strongly mixed regime:}\qquad& d_u = 0.02,\\
  \mbox{(b) Weakly coherent regime:}\qquad&  d_u = 0.5,\\
  \mbox{(c) Strongly coherent regime:}\qquad& d_u = 10.
\end{split}
\end{equation}
The simulation of the model in these three dynamical regimes is shown in Figure \ref{FHN_Regimes_Diffusion}. Figure \ref{Cov_diffusion_T5} shows the covariance between $u_1$ and $u_i$ for $i=1,\ldots,N$ at $t = 5$. Here we use a Monte Carlo simulation with $M=8192$ samples at each grid points (namely, running the Monte Carlo for $M$ times). With the increase of $d_u$ from Regime (a) to Regime (c), the spatial covariance of $u$ increases. In particular, in Regime (c) with a strong diffusion, the covariance between $u_1$ and all other $u_i$ for $i=1,\ldots, N$ remains significant, and is above $0.5$. Note that there is almost no mixing of the underlying stochastic system with such a large diffusion coefficient in Regime (c) (See Figure \ref{FHN_Regimes_Diffusion}), which may cause some issues of the controllability of the system. In fact, with a large diffusion coefficient, the role of the stochastic forcing becomes relatively weaker and the control of the system via the external stochastic forcing becomes more difficult. This also leads to some potential issues in applying the spatial averaging strategy for computing the covariance, especially for long time. Note that although Corollary \ref{cor:FHN} is always valid, the constant $\eta_\beta$ becomes very large with a large diffusion coefficient (due to the second term) and thus the consistency becomes nearly degenerated very quickly. See the remarks below Corollary \ref{cor:FHN} and the statement of the bound in \eqref{eqn:estbound}. The numerical results using the spatial averaging strategy will be illustrated in the next subsection. On the other hand, the spatial covariance of $v$ changes in the same fashion as $u$ but is weaker. This is because the diffusion appears in the $u$ equations and it only has an indirect impact on $v$. In all three regimes, the spatial covariance decays in an exponential rate (see the subpanels). All these are consistent with the result in \eqref{Bound2} of Theorem \ref{thm:heat}.
\begin{figure}[!h]
\centering
\hspace*{-2cm}{\includegraphics[width=20cm]{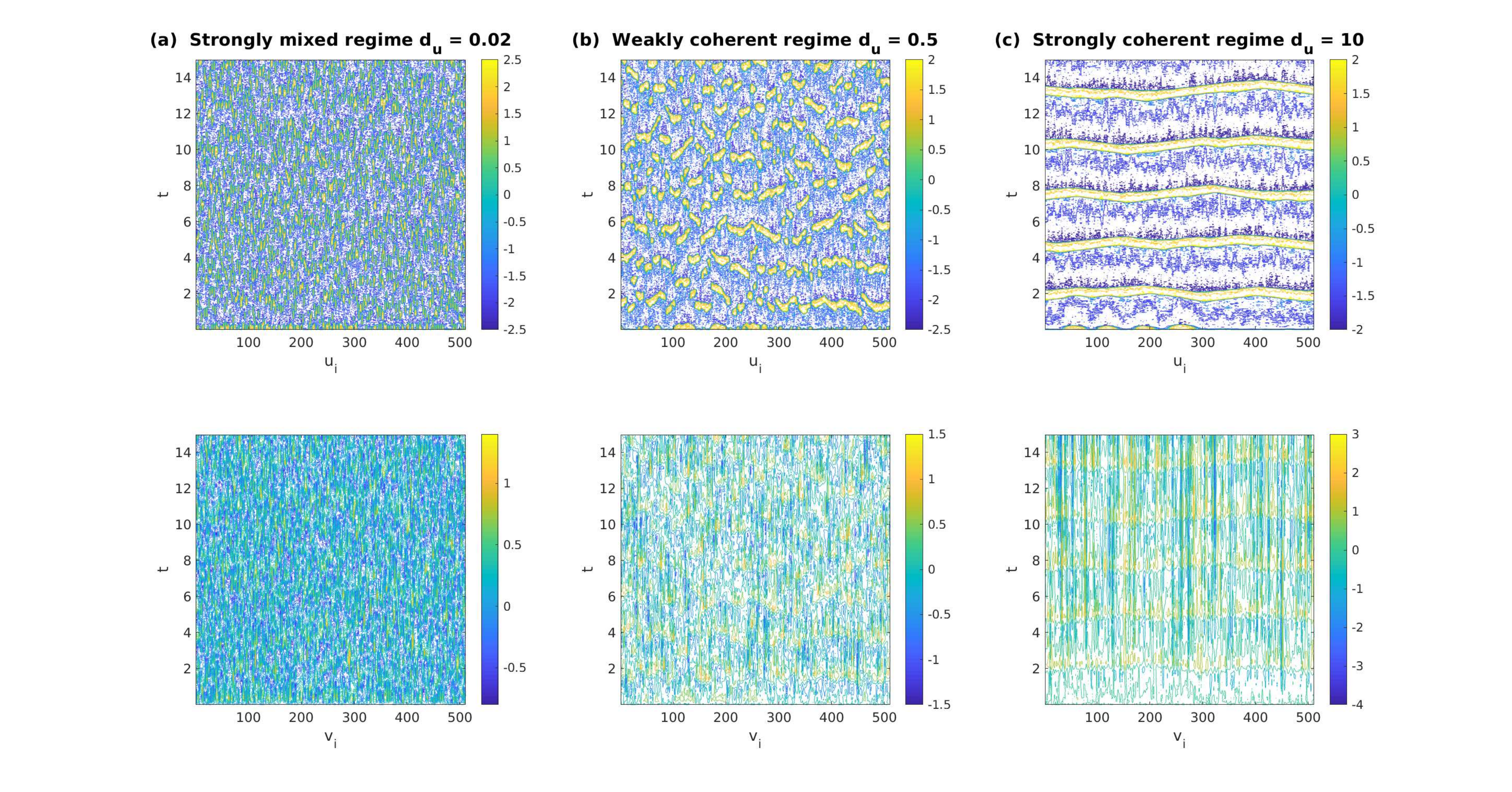}}
\caption{Simulations of the FHN model \eqref{FHN_model} with only diffusion but no mean field interaction ($w=0$) in (a) Strongly mixed regime, (b) Weakly coherent regime, and (c) Strongly coherent regime. The first row is the simulation of $u_i$ and the second is $v_i$. Here $N=512$.}\label{FHN_Regimes_Diffusion}
\end{figure}

\begin{figure}[!h]
\centering
\hspace*{-2cm}{\includegraphics[width=20cm]{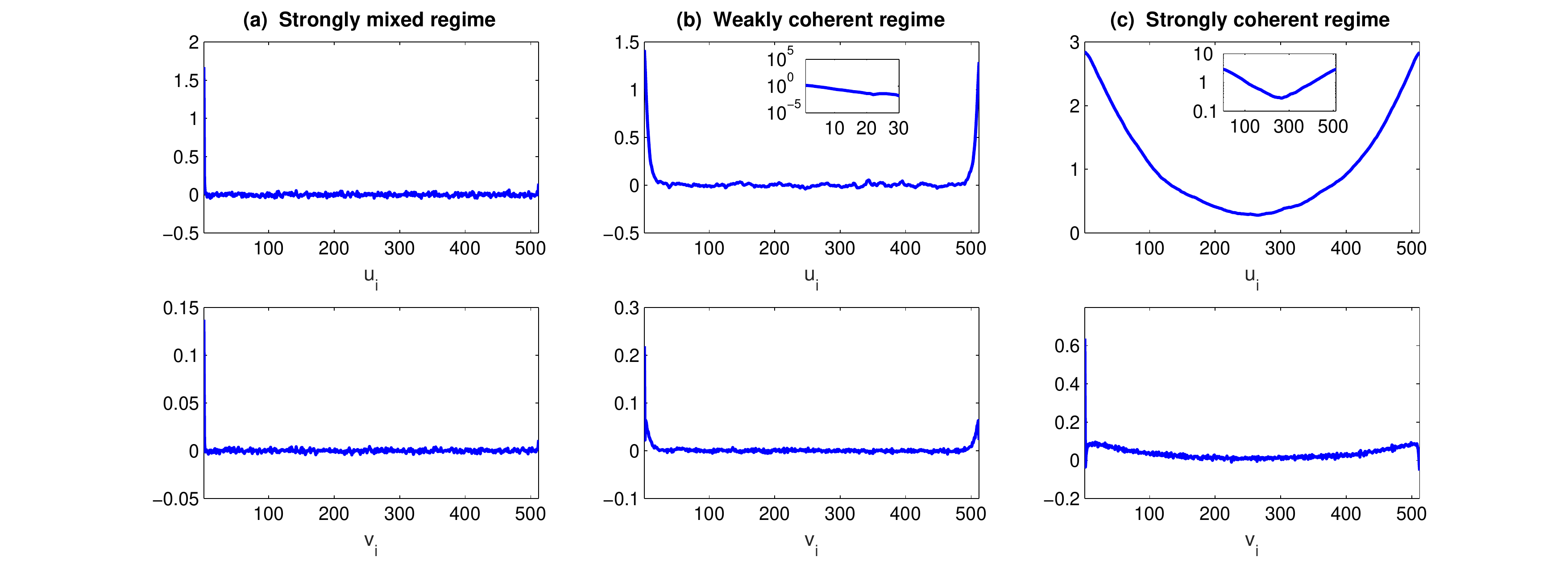}}
\caption{Covariance between $u_1$ and $u_i$ for $i=1,\ldots,N$ in the FHN model \eqref{FHN_model} with only diffusion but no mean field interaction ($w=0$) in the three regimes described in Figure \ref{FHN_Regimes_Diffusion} at time $t=5$. The first row is the simulation of $u_i$ and the second is $v_i$.  The subpanels show the covariance in a logarithm scale. }\label{Cov_diffusion_T5}
\end{figure}

\noindent\textbf{2. FHN model with only mean field interaction}.
Now we study the situation in the FHN model where the diffusion is zero ($d_u=0$). Therefore, the covariance comes purely from the global effect (i.e., the mean field interaction). We set again $\delta_1 = \delta_2 = 0.4$ but choose different values of the mean field interaction parameter $w$ which provides different dynamical behavior:
\begin{equation}\label{FHN_MeanField}
\begin{split}
  \mbox{(a) Weak mean field interaction regime:}\qquad& w = 0.1,\\
  \mbox{(b) Moderate mean field interaction regime:}\qquad&  w = 0.3,\\
  \mbox{(c) Strong mean field interaction regime:}\qquad& w = 0.5.
\end{split}
\end{equation}
The simulations of the three regimes are shown between Figure \ref{FHN_Regimes_MeanField}. Again, these results are computed using a Monte Carlo  simulation with $M=8192$ samples at each grid points. In the moderate mean field interaction regime (Regime (b)), some global effects can already been seen. In the strong mean field interaction regime (Regime (c)), it is clear that the variables at different grid points are highly synchronized with each other, which is consistent with the theoretic analysis described in the previous section.

Note the difference in Figure \ref{FHN_Regimes_Diffusion} and \ref{FHN_Regimes_MeanField}. In Figure \ref{FHN_Regimes_Diffusion} the diffusion leads to the development of local interaction and extends it to the global scale. In Figure \ref{FHN_Regimes_MeanField} the mean field interaction starts directly with a change in the global scale and such a change becomes significant with the increase of the mean field interaction strength $w$. Again, the constant $\eta_\beta$ in Corollary \ref{cor:FHN} with a large mean field interaction coefficient can become very large (due to the first term) and thus the consistency becomes nearly degenerated.


Figure \ref{Cov_meanfield_T3T5} shows the covariance $\mbox{Cov}(u_1,u_2)$ and $\mbox{Cov}(v_1,v_2)$ as a function of $N$ in two different regimes with either moderate or strong mean field interactions.
With the increase of $N$, the covariance decreases and the decrease rate is proportion to $1/N$ (see the log-log plot), which is consistent with the theoretical conclusion in \eqref{Bound1} of Theorem \ref{thm:heat}.


\begin{figure}[!h]
\centering
\hspace*{-2cm}{\includegraphics[width=20cm]{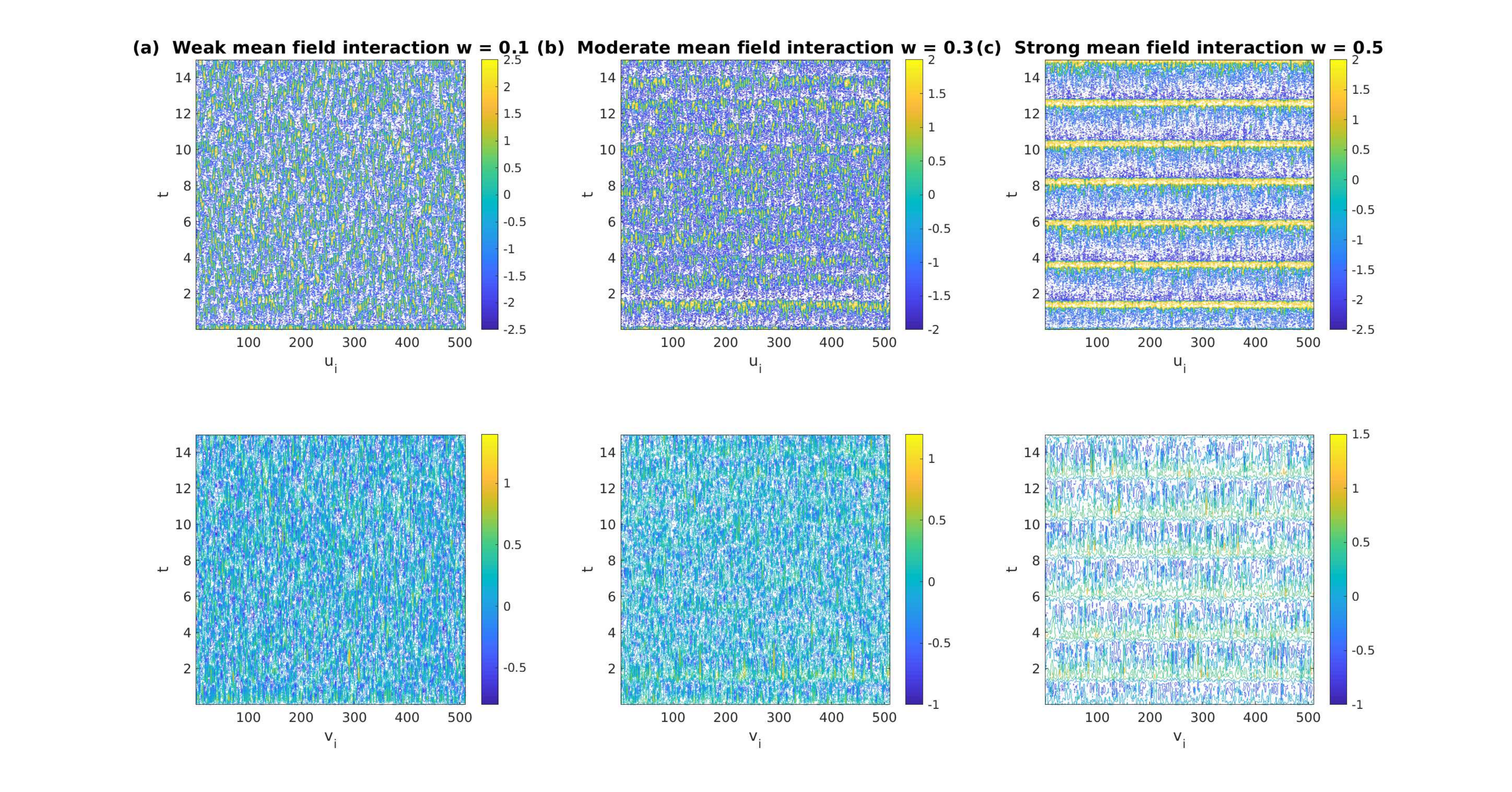}}
\caption{Simulations of the FHN model \eqref{FHN_model} with only mean field interaction but no diffusion $d_u=0$ in (a) Weak mean field interaction regime, (b) Moderate mean field interaction regime, and (c) Strong mean field interaction regime. The first row is the simulation of $u_i$ and the second is $v_i$. Here $N=512$.}\label{FHN_Regimes_MeanField}
\end{figure}

\begin{figure}[!h]
\centering
\hspace*{-2cm}{\includegraphics[width=20cm]{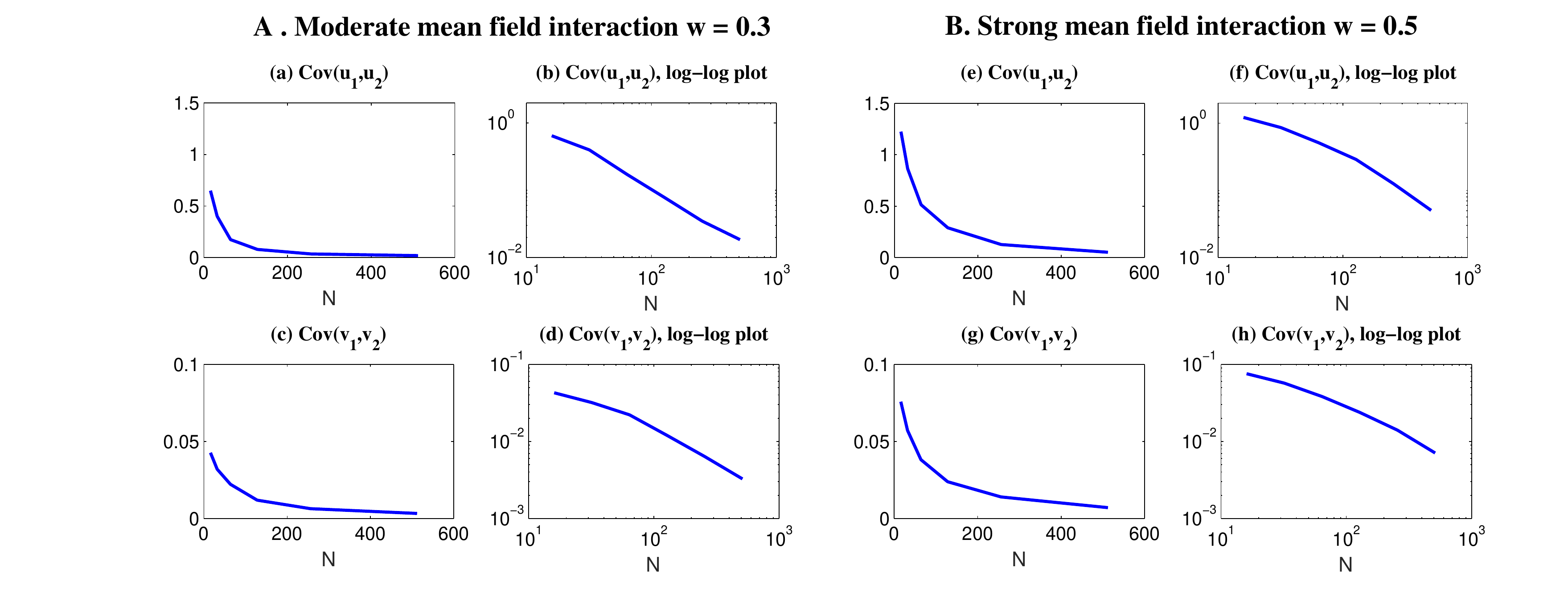}}
\caption{Covariance between $u_1$ and $u_2$ (panel (a)) and between $u_1$ and $u_2$ (panel (c)) as a function of $N$ in the regime with moderate mean field interaction $w=0.3$ and $d_u=0$. Panels (b) and (d) show the same results but are plotted in the log-log scale in order to illustrate the decay rate $1/N$ (in log-log scale, this corresponds to a linear decay rate). Panels (e)--(h) are similar but are for the regime with strong mean field interaction $w = 0.5$ and $d_u = 0$.}\label{Cov_meanfield_T3T5}
\end{figure}

\subsubsection{Recovery of the covariance using the spatial averaging strategy}

In this subsection, we compare the covariance computed from the spatial averaging strategy with the truth  in order to study the consistency of the spatial averaging strategy. Here the truth is formed by using direct Monte Carlo simulations.

Recall in Section \ref{Sec:SpatialAveraging}, the spatial averaging strategy makes use of the statistical symmetry of the system. Comparing with the covariance computed by using the direct Monte Carlo simulations with a large sample size, the spatial averaging strategy requires only a small number of repeated experiments to reach the same accuracy (providing that the consistency is valid). In fact, due to the spatial homogeneity, the covariance between $u_1$ and $u_2$ is the same as that between $u_i$ and $u_{i+1}$ for all $i=1,\ldots, N$. Therefore, regarding $u_1, u_2, \ldots, u_N$ as a sample set, the covariance between $u_1$ and $u_2$ can be computed by taking the covariance between $u_1, u_2, \ldots, u_N$ and its shifted sample set $u_2, u_3, \ldots, u_N, u_1$. Note that if $M$ repeat experiments are combined (namely, $M$ Monte Carlo simulations) with the spatial averaging strategy, then the effective sample size is $M_{eff} = MN$. One potential issue of the spatial averaging strategy is that the samples $u_1, u_2, \ldots, u_N$ are not independent with each other. Therefore, when these samples are strongly correlated, then the spatial averaging strategy in computing the covariance may lead to biases. In this subsection, we compare the covariance computed from the spatial averaging strategy with the truth that is generated from the Monte Carlo simulations in different dynamical regimes. The goal is to explore the consistency of this spatial averaging strategy in computing the covariance in different dynamical regimes.

Below, we always take $N=512$ spatial grid points. For a fair comparison, we generate the truth using the direct Monte Carlo simulation with $M_{MC} = N = 512$ samples, which equals to the effective sample number using the spatial averaging strategy with simply one simulation.
In all the tests shown below, we compare the covariance computed using the above two strategies at time $t=0.5, t=1, t=2$ and $t=5$. The regimes to be tested contain both local and non-local effects with various strengths. See Figures \ref{FHN_Regimes_1}--\ref{FHN_Regimes_2} for the spatiotemporal simulations of different dynamical regimes.

In Figure \ref{FHN_Regimes_1}, strong spatial correlations are seen in all three dynamical regimes and the underlying stochastic processes is not mixing. Yet, there are some fundamental difference in these dynamical regimes. In Regime (a), there is a strong diffusion  but there is no mean field interaction. Thus, the correlation comes from only the local effect but the diffusion is so strong that each grid point affects the other points that are far from itself. In Regime (b), only the mean field interaction plays an role since the diffusion is zero but the mean field interaction coefficient is quite large. Thus, a strong synchronization is seen in the spatiotemporal patterns. Regime (c) involves both a moderate diffusion and a moderate mean field interaction. The combined local and non-local effects also contribute to the strong correlation in the spatiotemporal patterns. Figure \ref{FHN_Shortterm_Failcase} shows the calculated covariance based on the spatial averaging strategy (red) and the truth generated from the direct Monte Carlo simulation (green). For a short time up to $t=1$, the consistency is found in Regimes (a) and (b). But the covariance computed from the spatial averaging strategy becomes significantly different from the truth after $t=2$, which indicates the loss of the long-term consistency using the spatial averaging strategy. These results are related to the loss of practical controllability as was discussed in the previous subsection. In fact, the constant $\eta_\beta$ in Corollary \ref{cor:FHN} becomes very large in both regimes due to the strong diffusion and/or the strong mean field interaction and thus the consistency of the spatial averaging strategy becomes nearly degenerated very quickly, which has been pointed out in the remarks below Corollary \ref{cor:FHN}.
For Regime (b), due to the strong global effect (via mean field interaction), the consistency is again lost very quickly at $t=1$. Note in all these three regimes the covariance is not localized after a certain time due to either strong diffusion or strong mean field interaction (or both). Thus, the spatial averaging strategy does not work well for long time.

On the other hand, no strong spatial synchronization is observed in the three dynamical regimes shown in Figure \ref{FHN_Regimes_2}. Some weak spatial synchronization is observed in Regimes (d) and (e), where the former involves a moderate diffusion and a weak mean field interaction while the latter is given by a moderate mean field interaction but no diffusion. Regarding the spatial covariance, Figure \ref{FHN_Shortterm_Moderatecase} shows that the spatial averaging strategy is able to provide accurate solutions up to $t=2$. At $t=5$, a slight inconsistency in the solution computed from the spatial averaging strategy is found, which is  due to the degeneration of the consistency for long time as indicated in  Corollary \ref{cor:FHN}.  Thus, the spatial averaging strategy is skillful at short and moderate times but the long-term consistency cannot be guaranteed in these regimes.

Finally, in Regime (f), which involves only a moderate diffusion but no mean field interaction, the consistency of the solution using the spatial averaging strategy lasts longer and is at least up to $t=5$. See Figure \ref{FHN_Shortterm_Successcase}.

To summarize, both a strong global effect (mean field interaction) and a strong local effect (diffusion) will lead to the loss of the long-term consistency in the solutions using  the spatial averaging strategy, which has been clearly indicated in  Corollary \ref{cor:FHN} and the remarks below it. The spatial averaging strategy is skillful and can be applied for a relatively long time when the FHN model has a weak global or a moderate local effect.

\begin{figure}[!h]
\centering
\hspace*{-2cm}{\includegraphics[width=20cm]{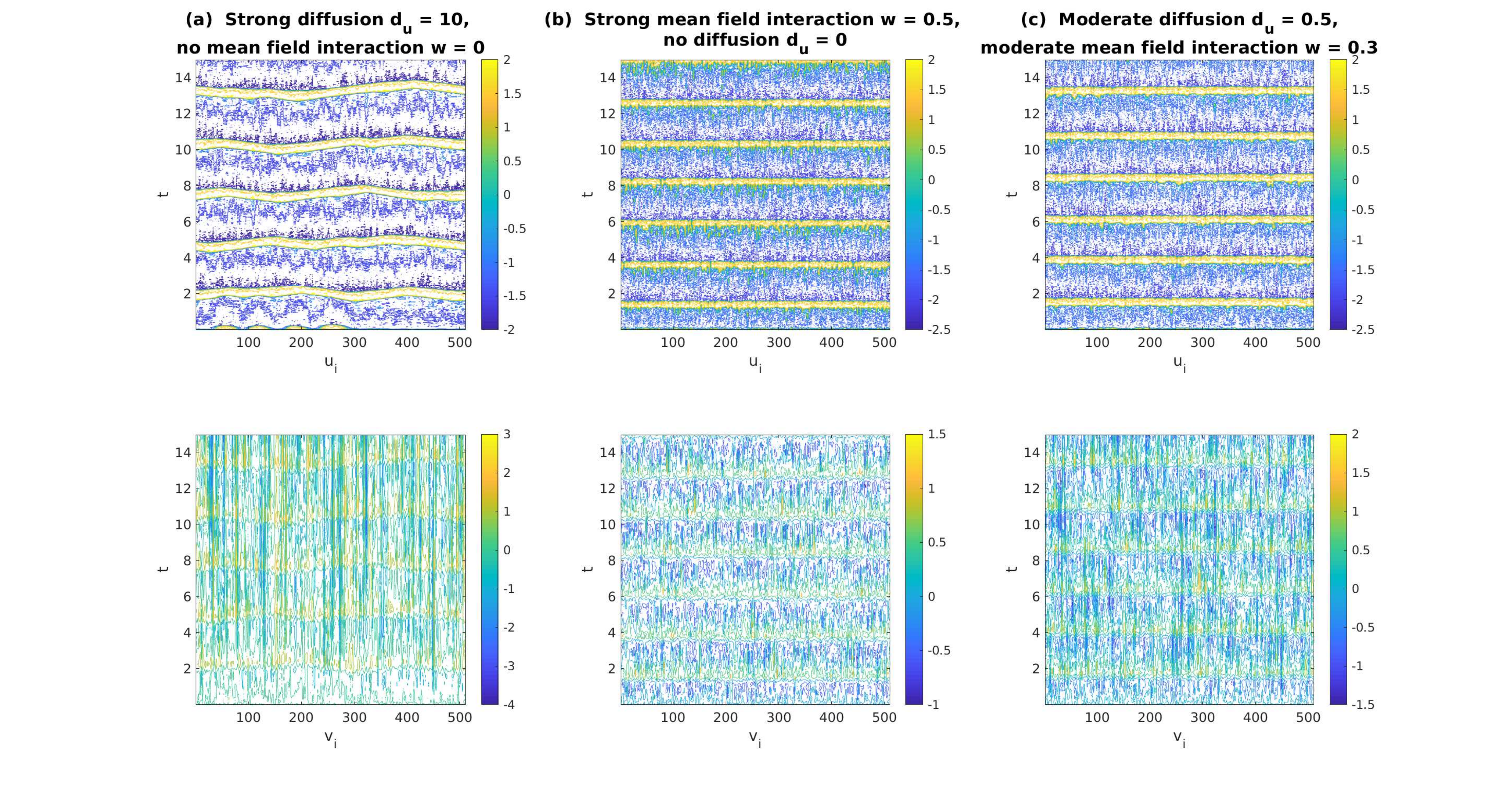}}
\caption{Spatiotemporal patterns of different dynamical regimes: (a) Strong diffusion $d_u = 10$ and no mean field interaction $w = 0$; (b) Strong mean field interaction $w = 0.5$ and no diffusion $d_u = 0$; and (c) Moderate diffusion $d_u = 0.5$ and moderate mean field interaction $w = 0.3$.}\label{FHN_Regimes_1}
\end{figure}

\begin{figure}[!h]
\centering
\hspace*{-2cm}{\includegraphics[width=20cm]{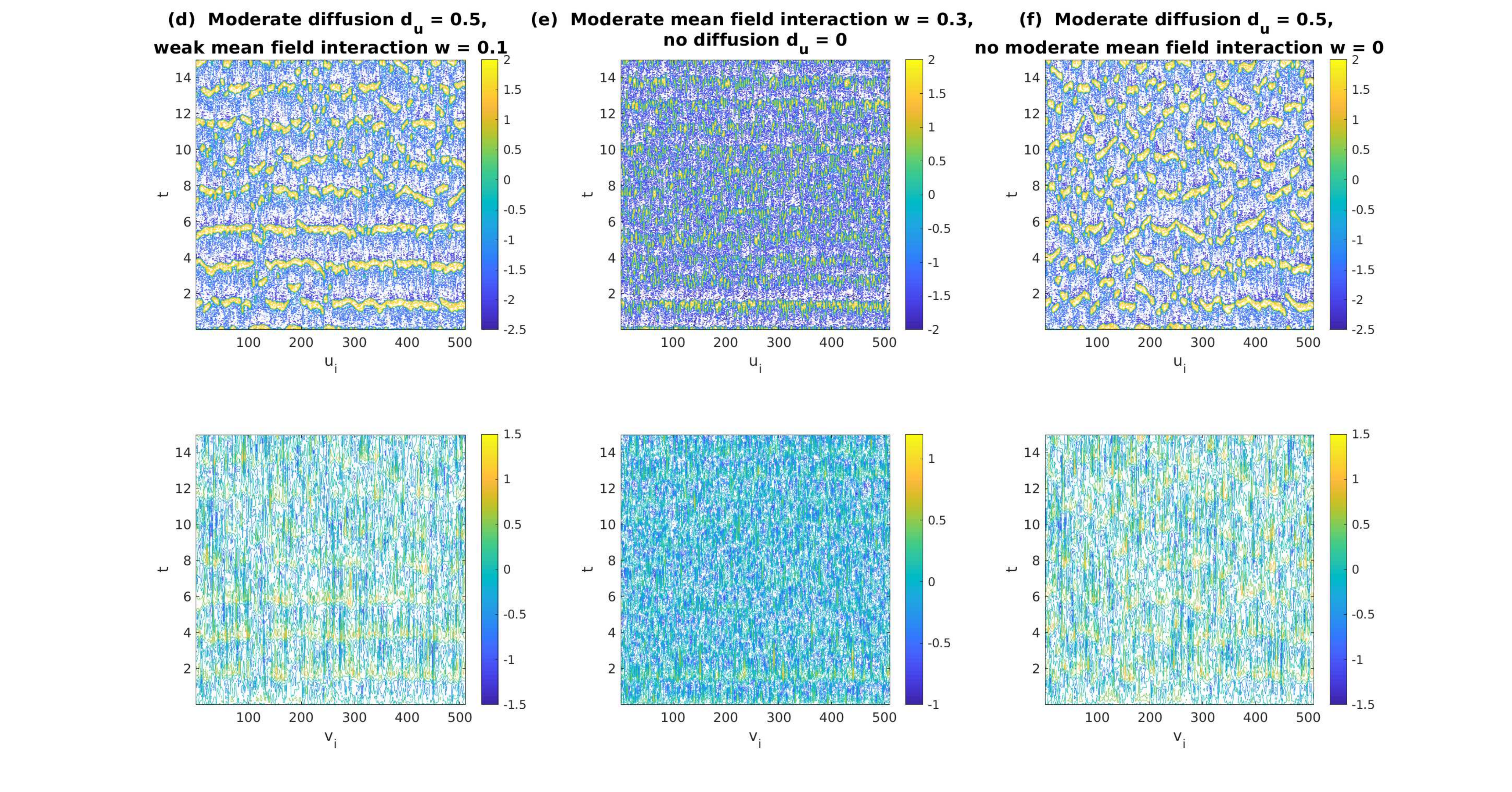}}
\caption{Spatiotemporal patterns of different dynamical regimes (continued): (d) Moderate diffusion $d_u = 0.5$ and weak mean field interaction $w = 0.1$; (e) Moderate mean field interaction $w = 0.3$ and no diffusion $d_u = 0$; and (f) Moderate diffusion $d_u = 0.5$ and no mean field interaction $w = 0$.}\label{FHN_Regimes_2}
\end{figure}

\begin{figure}[!h]
\centering
\hspace*{-2cm}{\includegraphics[width=20cm]{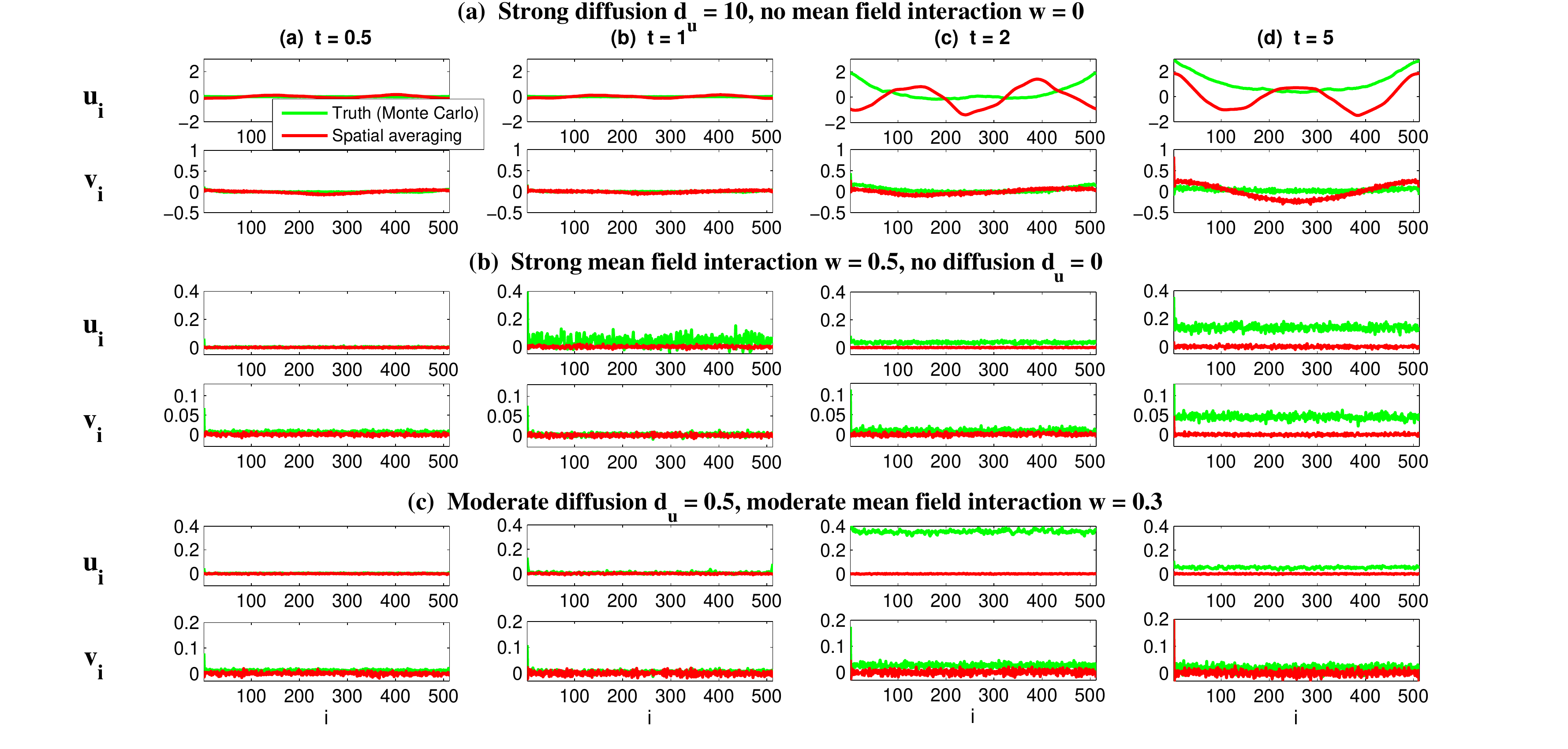}}
\caption{Comparison of the Monte Carlo and spatial average strategies in computing the covariance of the FHN model in the regimes with (a) Strong diffusion $d_u = 10$ and no mean field interaction $w = 0$; (b) Strong mean field interaction $w = 0.5$ and no diffusion $d_u = 0$; (c) Moderate diffusion $d_u = 0.5$ and moderate mean field interaction $w = 0.3$.}\label{FHN_Shortterm_Failcase}
\end{figure}

\begin{figure}[!h]
\centering
\hspace*{-2cm}{\includegraphics[width=20cm]{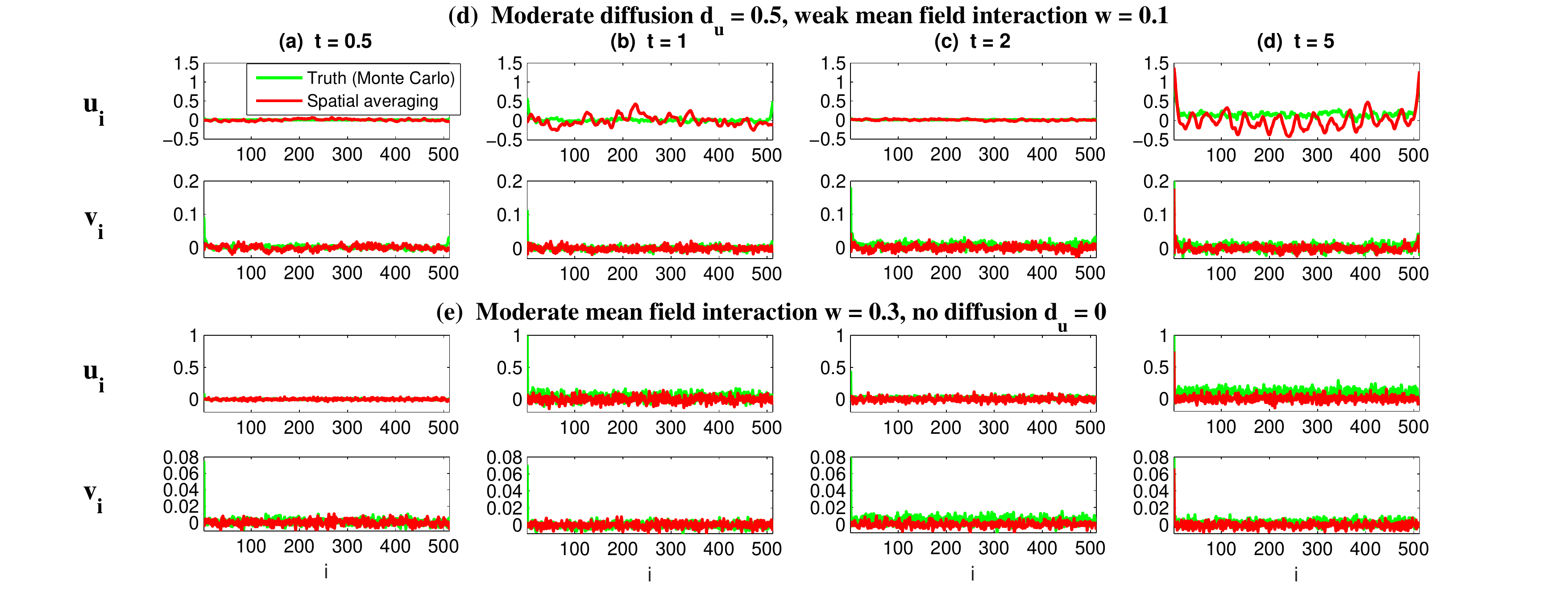}}
\caption{Comparison of the Monte Carlo and spatial average strategies in computing the covariance of the FHN model in the regimes with (d) Moderate diffusion $d_u = 0.5$ and weak mean field interaction $w = 0.1$; and (e) Moderate mean field interaction $w = 0.3$ and no diffusion $d_u = 0$; }\label{FHN_Shortterm_Moderatecase}
\end{figure}

\begin{figure}[!h]
\centering
\hspace*{-2cm}{\includegraphics[width=20cm]{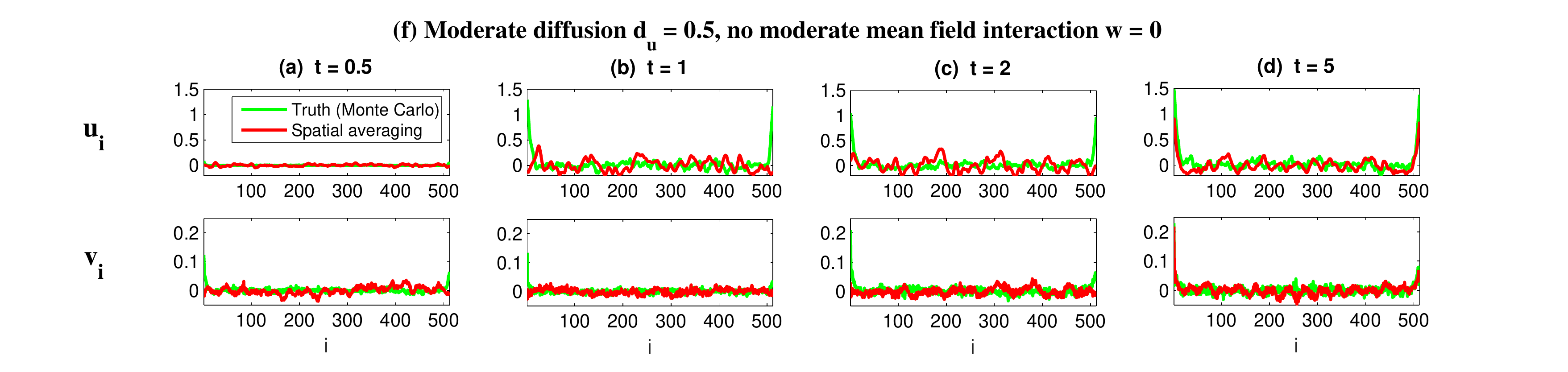}}
\caption{Comparison of the Monte Carlo and spatial average strategies in computing the covariance of the FHN model in the regimes with (f) Moderate diffusion $d_u = 0.5$ and no mean field interaction $w = 0$.}\label{FHN_Shortterm_Successcase}
\end{figure}

\section{Conclusions}\label{Sec:Conclusions}

In this article, a theoretical framework to understand the spatial localization  for a large class of stochastically coupled nonlinear systems in high dimensions is developed and is applied to nonlinear complex models for excitable media. Rigorous mathematical theories (Theorem \ref{thm:heat}) show covariance decay behavior due to both local and nonlocal effects, which result from the diffusion and the mean field interaction, respectively. The analysis is based on a comparison with an appropriate linear surrogate model, of which the covariance propagation can be computed explicitly. Two important applications of these theoretical results are illustrated. They are the spatial averaging strategy for efficiently sampling the state variables and the localization technique in data assimilation. Test examples of a  linear model and a stochastically coupled FitzHugh-Nagumo model for excitable media are adopted to validate the theoretical results. The later is also used for a systematical study of the spatial averaging strategy in efficiently sampling the covariance matrix in different dynamical regimes. The spatial averaging strategy is always skillful for short time. The long-term consistency of the spatial averaging strategy requires that the mean field interaction and the diffusion cannot be too strong and the covariance matrix has localized structures.

\section*{Acknowledgement}
The research of N.C. is supported by the Office of Vice Chancellor for Research and Graduate Education (VCRGE) at University of Wisconsin-Madison.
The research of A.J.M. is partially supported by the Office of Naval Research Grant ONR MURI N00014-16-1-2161 and the Center for Prototype Climate Modeling (CPCM) at New York University Abu Dhabi Research Institute. X.T.T is supported by NUS grant R-146-000-226-133.

\clearpage

\appendix
\section{Proof}
We provide the proofs of the theoretical results in this appendix. In particular for Theorem \ref{thm:main}, its arguments are broken into subsections

\subsection{Consistency of the numerical formulation}
\label{sub:consistent1}

\begin{lem}
\label{lem:htozero}
Under the conditions of Theorem \ref{thm:heat},  let $\bfX(0)=\bfx(0)$, and $\bfW_n=\bfw_{hn}-\bfw_{h(n-1)}$, and consider the corresponding $X(n)$ given by \eqref{eqn:update}. Then if we write $\Delta(t)=\bfX(T)-\bfx(t)$,  there is a constant $D_t$ such that for any sufficiently small $h$,
\[
\E \|\Delta(s)\|^2\leq \exp(2(\lambda_0+2\lambda_H)t) h\|D_\Sigma\|^2_F,\quad s\leq t.
\]
\end{lem}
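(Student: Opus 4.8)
I would first concatenate \eqref{sys:SDE} into $d\bfx(t)=F(t,\bfx)\,dt+D_\Sigma\,d\bfw(t)$, where $F$ collects the block drifts $\bff_i$ together with the mean-field term, and work on the grid $t_n=nh$ with $T=t/h$. The essential simplification is the coupling chosen in the statement: since $\bfW_n=\bfw_{hn}-\bfw_{h(n-1)}$, the additive increment injected by \eqref{eqn:update} coincides exactly with the Brownian increment $D_\Sigma(\bfw(t_{n+1})-\bfw(t_n))$ of the true equation. Writing $e_n=\bfX(n)-\bfx(t_n)$ (so $e_0=0$) and letting $\bfz(s)$ be the noise-free flow \eqref{sys:ODE} on $[t_n,t_{n+1}]$ started at $\bfx(t_n)$, the shared increments cancel and I obtain the purely ``deterministic'' recursion
\[
e_{n+1}=\bigl(\Psi_h(\bfX(n))-\Psi_h(\bfx(t_n))\bigr)-L_n,\qquad L_n=\int_{t_n}^{t_{n+1}}\bigl(F(s,\bfx(s))-F(s,\bfz(s))\bigr)\,ds .
\]
Thus the only source of error is the discrepancy between propagating the drift along the true (noisy) path $\bfx$ versus the frozen-noise path $\bfz$.

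\textbf{Flow stability.} The first ingredient is a contraction estimate for $\Psi_h$. For two ODE solutions, $\frac{d}{ds}\|\bfy-\bfz\|^2=2\langle \bfy-\bfz,F(\bfy)-F(\bfz)\rangle$, and I would bound the right-hand side block-by-block using exactly the constants in \eqref{eqn:homo}: the self-term contributes at most $\lambda_0\|\bfy-\bfz\|^2$ (symmetric part), the two neighbour terms at most $2\lambda_F\|\bfy-\bfz\|^2$ after Young's inequality, and the mean-field term at most $\lambda_H\|\bfy-\bfz\|^2$, the last step using $\tfrac1N(\sum_i a_i)^2\le\sum_i a_i^2$ so that the bound is \emph{uniform in $N$}. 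This yields a logarithmic-norm constant $\mu\le\lambda_0+2\lambda_F+\lambda_H$ and hence the pathwise bound $\|\Psi_h(\bfX(n))-\Psi_h(\bfx(t_n))\|\le e^{\mu h}\|e_n\|$.

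\textbf{Local error and accumulation.} The second ingredient is $\E\|L_n\|^2=O(h^3)\|D_\Sigma\|_F^2$. Since $\bfx(s)-\bfz(s)=\int_{t_n}^s(F(\bfx)-F(\bfz))\,dr+D_\Sigma(\bfw(s)-\bfw(t_n))$, a Gr\"onwall/It\^o estimate gives $\E\|\bfx(s)-\bfz(s)\|^2\le C(s-t_n)\|D_\Sigma\|_F^2$ on the step, whence Cauchy--Schwarz yields the claim. Squaring the recursion and treating the cross term by Young's inequality in the form $2ab\le h a^2+h^{-1}b^2$ gives, with $a_n=\E\|e_n\|^2$,
\[
a_{n+1}\le e^{(2\mu+1)h}a_n+C h^2\|D_\Sigma\|_F^2 .
\]
It is precisely this choice $\epsilon=h$ that converts the $O(h^3)$ local error into an $O(h^2)$ increment per step; the discrete Gr\"onwall lemma over the $T=t/h$ steps (with $a_0=0$) then sums these to $a_T\le D_t\,h\,\|D_\Sigma\|_F^2$, with $D_t=\tfrac{C}{2\mu+1}e^{(2\mu+1)t}$ of the claimed exponential form, the exponent being governed by the logarithmic-norm constant $\mu$. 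Non-grid times $s\le t$ are handled by applying the same one-step estimate on the incomplete interval.

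\textbf{Main obstacle.} The delicate point is that \eqref{eqn:homo} only controls the symmetric part of $\nabla_{\bfx_i}\bff_i$, so the drift need not be globally Lipschitz --- for the FHN nonlinearity it grows cubically. Consequently the naive estimate $\|F(\bfx(s))-F(\bfz(s))\|\le \ell\|\bfx(s)-\bfz(s)\|$ used in the local-error step is unavailable with a uniform $\ell$, and this is where the real work lies rather than in the Gr\"onwall accounting. I would close the gap in the standard way for dissipative SDEs: the one-sided bound encoded by $\lambda_0$ gives $\langle\bfx,F(\bfx)\rangle\le (\mu+\tfrac12)\|\bfx\|^2+\tfrac12\|F(0)\|^2$, hence uniform-in-$h$ moment bounds $\sup_{n\le T}\E\|\bfX(n)\|^p+\sup_{s\le t}\E\|\bfx(s)\|^p<\infty$; one then replaces the global Lipschitz estimate by a local one whose constant grows polynomially in $\|\bfx\|,\|\bfz\|$ and controls the remainder by H\"older against these moments (in the manner of Higham, Mao and Stuart). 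Everything else in the argument is routine.
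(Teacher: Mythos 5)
Your proposal is correct and lands on the same $O(h)$ mean--square bound, but it organizes the error differently from the paper, and the difference matters. You decompose the global error as a stability factor $\|\Psi_h(\bfX(n))-\Psi_h(\bfx(t_n))\|\le e^{\mu h}\|e_n\|$ plus a local error $L_n$ that must be estimated \emph{in norm}, $\E\|L_n\|^2=O(h^3)$; this is what forces you to confront the two-sided Lipschitz constant of the drift and motivates your Higham--Mao--Stuart patch for the cubic FHN nonlinearity. The paper avoids that step entirely by a different decomposition: it runs the deterministic flow $\bfz$ from the numerical iterate $\bfX(n)$ (not from $\bfx(t_n)$) and applies It\^o's formula to $\|\bfx(t)-\bfz(t)\|^2$ on each subinterval, so the drift difference only ever appears through the monotone pairing $\langle F(\bfx)-F(\bfz),\bfx-\bfz\rangle\le \mu\|\bfx-\bfz\|^2$, for which the one-sided constants of \eqref{eqn:homo} suffice. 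The $O(h)$ rate then comes not from an $O(h^3)$ local error but from a cancellation: the It\^o correction $\|D_\Sigma\|^2_F h$ accumulated during the step is subtracted back (via an It\^o-isometry computation of the cross term) when the numerical solution absorbs the coupled noise increment at the end of the step, leaving an $O(h^2)$ residual per step before the discrete Gr\"onwall sum. Your route buys generality and honesty --- the obstacle you flag is real, and in fact the paper's own cross-term step $\E\langle \Delta(nh+s),\int_0^s D_\Sigma d\bfw\rangle=\|D_\Sigma\|^2_F s$ silently discards a drift--noise correlation whose control again needs $\|F(\bfx)-F(\bfz)\|$, so the gap you identify is present but hidden there too; the paper's route buys a shorter argument with constants expressed directly in $\lambda_0,\lambda_F,\lambda_H$. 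One further point in your favor: your Cauchy--Schwarz treatment of the mean-field term, $\tfrac1N(\sum_i a_i)^2\le\sum_i a_i^2$, is cleaner than the paper's, which treats $\nabla F$ as block-tridiagonal and effectively drops the mean-field contribution (and writes $\lambda_H$ where the neighbor coupling $\lambda_F$ should appear).
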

\begin{proof}
Let $\bfz(nh)=\bfX(n)$, and $\bfz((n+s)h)=\Psi_{sh} (\bfX(n))$ for $0\leq s<1$. Clearly $\bfz(0)=\bfx(0)$. Note that
\[
d\Delta(t)=(F(\bfx(t))-F(\bfz(t)))dt+D_\Sigma d\bfw(t),\quad t\in (nh, (n+1)h).
\]
So
\[
d \|\Delta(t)\|^2=2\langle F(\bfx(t))-F(\bfz(t)), \bfx(t)-\bfz(t)\rangle dt+\|D_\Sigma\|^2_F dt+
2\langle \bfx(t)-\bfz(t), D_\Sigma d\bfw(t)\rangle.
\]
Note that under our conditions
\begin{align*}
\langle F(\bfx(t))-F(\bfz(t)), \bfx(t)-\bfz(t)\rangle=  \int^1_0 ds \langle \nabla F (\bfz(t)+s\Delta(t)) \Delta(t), \Delta(t)\rangle.
\end{align*}
But
\begin{align*}
&\langle \nabla F (\bfz(t)+s\Delta(t)) \Delta(t), \Delta(t)\rangle\\
&=\sum_{i} \Delta_i (t)^T\{ \nabla F (\bfz(t)+s\Delta(t))\}_{i,i} \Delta_i (t)+ \sum_{i,m=\pm 1} \Delta_{i+m} (t)^T\{ \nabla F (\bfz(t)+s\Delta(t))\}_{i,i+m} \Delta_i (t)\\
&\leq \sum_{i} \lambda_0 \|\Delta_i (t)\|^2+ \sum_{i,m=\pm 1} \lambda_H \|\Delta_{i+m} (t)\| \|\Delta_i (t) \|
\leq (\lambda_0+2\lambda_H) \|\Delta(t)\|^2.
\end{align*}

Therefore
\[
\langle F(\bfx(t))-F(\bfz(t)), \bfx(t)-\bfz(t)\rangle\leq (\lambda_0+2\lambda_H) \|\Delta(t)\|^2.
\]
So by Gronwall's inequality
\[
\E \|\Delta(nh+h-)\|^2\leq \exp(2(\lambda_0+2\lambda_H)h)\E \|\Delta(nh)\|^2+
\frac{ \exp(2(\lambda_0+2\lambda_H)h)-1}{2\lambda_0+4\lambda_H }\|D_\Sigma\|^2_F.
\]
Next notice that
\[
\Delta((n+1)h)=\Delta(nh)+\int^h_0 \left(F(\bfx(nh+s))-F(\bfz(nh+s)) \right)ds=\Delta((n+1)h-)-
 \int^h_0 D_\Sigma d\bfw_{nh+s}.
\]
By Ito's isometry
\[
d\left\langle \Delta(nh+s), \int^s_0 D_\Sigma d\bfw_{nh+s}\right\rangle
=\left\langle F(\bfx(nh+s))-F(\bfz(nh+s)), D_\Sigma d\bfw_{nh+s}\right\rangle ds
+\|D_\Sigma\|^2_F ds.
\]
Therefore
\[
\E \left\langle \Delta(nh+s), \int^s_0 D_\Sigma d\bfw_{nh+s}\right\rangle=
\|D_\Sigma\|^2_F s.
\]
Therefore, when $h$ is small enough, by Taylor expansion,
\begin{align*}
\E \|\Delta((n+1)h)\|^2
&=\E \|\Delta((n+1)h-)\|^2-\|D_\Sigma\|^2_F h\\
&\leq \exp(2(\lambda_0+2\lambda_H)h)\E \|\Delta(nh)\|^2
+\left(\frac{ \exp(2(\lambda_0+2\lambda_H)h)-1}{2\lambda_0+4\lambda_H }-h\right)\|D_\Sigma\|^2_F\\
&\leq \exp(2(\lambda_0+2\lambda_H)h)\E \|\Delta(nh)\|^2+4(\lambda_0+2\lambda_H)h^2\|D_\Sigma\|^2_F.
\end{align*}
Then by the discrete Gronwall's inequality, if $\lambda_0+2\lambda_H>0$, at $Th=t$, since $\exp(a)-1>a$ for all $a> 0$,
\[
\E \|\Delta(t)\|^2\leq 2(\lambda_0+2\lambda_H)\frac{\exp(2(\lambda_0+2\lambda_H)t)-1}{\exp(2(\lambda_0+2\lambda_H)h)-1}h^2\|D_\Sigma\|^2_F\leq \exp(2(\lambda_0+2\lambda_H)t)h \|D_\Sigma\|^2_F.
\]

\end{proof}

\subsection{Decomposition of covariance}
Clearly, we can view $\bfX(T)$ as a function of the realization of $\bfW_i$,
\begin{align*}
\bfX(T)=\Phi (\bfW_0,\bfW_1,\bfW_2,\cdots, \bfW_T)=\Psi_h(\cdots(\Psi_h(\Psi_h(\bfX_0)+\Sigma_h\bfW_1)+\Sigma_h\bfW_2)\cdots)+\Sigma_h\bfW_T.
\end{align*}
For the simplicity of notation, we write $\bfW_0,\bfW_1,\bfW_2,\cdots, \bfW_T$ as $\bfW_{0:T}$ in below. And the $i$-th block of $\Phi\in \reals^{qN}$ as $\Phi_i\in \reals^q$. Then we can write $\bfX_i(T)=\Phi_i(\bfW_{0:T})$.

We can study the dependence of $\bfX(T)$ on the $\bfW_i$ through the gradients, written as $\nabla_{i}\Phi:=\nabla_{\bfW_i}\Phi$. By the chain rule, it has this back propagation formulation,
\begin{equation}
\label{eqn:gradientphi}
\begin{gathered}
\nabla_T\Phi =\Sigma_h,\quad \nabla_{T-1}\Phi =\nabla \Psi_h(\bfX_{T-1})\Sigma_h,\\
\nabla_{T-2}\Phi =\nabla \Psi_h(\bfX_{T-1})\nabla \Psi_h(\bfX_{T-2})\Sigma_h,\\
\nabla_{k}\Phi= \nabla \Psi_h(\bfX_{T-1})\cdots\nabla \Psi_h(\bfX_{k})\Sigma_h,\\
\nabla_{0}\Phi= \nabla \Psi_h(\bfX_{T-1})\cdots\nabla \Psi_h(\bfX_{0})\Sigma_0. \\
\end{gathered}
\end{equation}
To investigate the covariance,  we consider another independent sample
\[
\bfX'(T)=\Phi(\bfW'_{0:T})=\Phi (\bfW'_0,\bfW'_1,\bfW'_2,\cdots, \bfW'_T).
\]
Then clearly the covariance can be written as
\[
\text{cov}(g(\bfX_i(T)), g(\bfX_j(T)))=\E g(\bfX_i(T)) g(\bfX_j(T))-\E g(\bfX_i(T)) g(\bfX'_j(T)).
\]
We can decompose it by considering
\begin{align}
\notag
&\Delta_k:=g(\Phi_i (\bfW_{0:T}))g(\Phi_j (\bfW_{0:k},\bfW'_{k+1:T}))-g(\Phi_i (\bfW_{0:T}))g(\Phi_j (\bfW_{0:k-1},\bfW'_{k:T}))\\
\label{tmp:Deltak}
&=g(\Phi_i (\bfW_{0:T}))g(\Phi_j (\bfW_{0:k-1},\bfW_k, \bfW'_{k+1:T}))-g(\Phi_i (\bfW_{0:T}))g(\Phi_j (\bfW_{0:k-1},\bfW'_k,\bfW'_{k+1:T})).
\end{align}
Because $\bfX(T)=\Phi(\bfW_{0:T}), \bfX'(T)=\Phi(\bfW'_{0:T})$, so
\[
g(\bfX_i(T)) g(\bfX_j(T))-g(\bfX_i(T))g(\bfX'_j(T))=\sum_{k=0}^{T} \Delta_k.
\]
We will bound $\E \Delta_k$ using a Stein's lemma type of formula, Lemma \ref{lem:Gaussint}. Its proof is given below.

\subsection{Stein's lemma}

\begin{proof}[Proof of Lemma \ref{lem:Gaussint}]
The first part comes simply by
\[
\cov(f(X),g(X))=\E f(X)g(X)-\E f(X)\E g(Y)=\E f(X)g(X)-\E f(X) g(Y).
\]
The note that
\begin{align*}
f(X)g(Y)-f(X)g(X)=f(X)g(X^\theta)&\bigg|^{\theta=\frac\pi2}_{\theta=0}=\int^{\frac\pi2}_{0}  f(X)\frac{d}{d\theta} g(X^\theta) d\theta\\
&=\int^{\frac\pi2}_0  f(X) \langle\nabla g(X^\theta), -\sin\theta\, X+\cos\theta\, Y \rangle d\theta.
\end{align*}
Note that when $X\sim \mathcal{N}(0, \mathbf{I}_d)$, and smooth vector field $h:\reals^d\to \reals^d$, let $c=(2\pi)^{-\frac{d}{2}}$, by the divergence theorem of calculus, $\int \nabla\cdot (h(\bfx) e^{-\frac12|\bfx|^2})d\bfx=0$, therefore
\begin{equation}
\E \langle h(X), X\rangle=c\int \langle h(\bfx), \bfx\rangle e^{-\frac{1}{2}|\bfx|^2}d\bfx=-c \int \langle h(\bfx), \nabla e^{-\frac{1}{2}|\bfx|^2}\rangle d\bfx
=c\int \nabla\cdot h(\bfx) e^{-\frac12|\bfx|^2}d\bfx=\E \nabla\cdot h(X).
\end{equation}
Here, $\nabla\cdot$ denotes the divergence operator. Therefore, if we condition on the value of $Y$ and use law of total expectation
\begin{align*}
\E \langle f(X)\nabla g(X^\theta), X\rangle&=\E \E_Y \langle f(X)\nabla g(\cos\theta\, X+\sin\theta\, Y), X\rangle\\
&=\E \E_Y \nabla\cdot (f(X)\nabla g(\cos\theta\, X+\sin\theta\, Y))\\
&=\E \E_Y \langle \nabla f(X), \nabla g(X^\theta)\rangle+\cos\theta\, \E \E_Y f(X)\nabla \cdot \nabla g(X^\theta)\\
&=\E  \langle \nabla f(X), \nabla g(X^\theta)\rangle+\cos\theta\, \E  f(X)\nabla \cdot \nabla g(X^\theta).
\end{align*}
Likewise, we have the following
\begin{align*}
\E \langle f(X)\nabla g(X^\theta), Y\rangle&=\E \E_X \langle f(X)\nabla g(\cos\theta\, X+\sin\theta\, Y), Y\rangle\\
&=\sin\theta \, \E \E_X  f(X)\nabla \cdot \nabla g(\cos\theta\, X+\sin\theta\, Y)\\
&=\sin\theta \, \E f(X)\nabla \cdot \nabla g(\cos\theta\, X+\sin\theta\, Y).
\end{align*}
Therefore, we have
\[
\E f(X) \langle\nabla g(X^\theta), -\sin\theta\, X+\cos\theta\, Y \rangle =-\sin\theta \E \langle \nabla f(X), \nabla g(X^\theta)\rangle,
\]
and this finishes our proof.
\end{proof}

To apply Lemma \ref{lem:Gaussint}, we view $\bfW_k$ as $X$, and $\bfW'_k$ as $Y$ in \eqref{tmp:Deltak}. Let
\[
\bfW^{\theta,k}:=[\bfW_{1:k-1}, \cos\theta \bfW_k+\sin \theta \bfW_k',\bfW'_{k+1:T}].
\]
Then
\[
\Delta_k=g(\Phi_i (\bfW_{0:T}))g(\Phi_j(\bfW^{0,k}))-g(\Phi_i (\bfW_{0:T}))g(\Phi_j (\bfW^{\frac{\pi}{2},k}))
\]

Note that by the chain rule,
\[
\nabla_{k} g(\Phi_j(\bfW_{0:T}))=\nabla g (\Phi_j(\bfW_{0:T})) \nabla_k \Phi_j (\bfW_{0:T}).
\]
So by Lemma \ref{lem:Gaussint}, we have
\begin{align}
\notag
\E \Delta_k&=\int^{\frac\pi2}_0\sin \theta \E \langle \nabla g(\Phi_i(\bfW_{0:T})) \nabla_k \Phi_i (\bfW_{0:T}),  \nabla g(\Phi_j(\bfW^{\theta,k})) \nabla_k \Phi_j (\bfW^{\theta,k}) \rangle d\theta\\
\notag
&=\int^{\frac\pi2}_0\sin \theta \E \text{tr}\left( \nabla g(\Phi_j(\bfW^{\theta,k}))^T \nabla g(\Phi_i(\bfW_{0:T})) \nabla_k \Phi_i (\bfW_{0:T}) \nabla_k \Phi_j (\bfW^{\theta,k})^T \right) d\theta\\
\label{eqn:Edeltak}
&\leq \|\nabla g\|^2_\infty \int^{\frac\pi2}_0\sin \theta \E \left|\text{tr}\left( \nabla_k \Phi_i (\bfW_{0:T}) \nabla_k \Phi_j (\bfW^{\theta,k})^T \right)\right| d\theta.
\end{align}
Note that  $\nabla_k \Phi_i (\bfW_{0:T}) \nabla_k \Phi_j (\bfW^{\theta,k})^T\in \reals^{q\times q}$ is the $(i,j)$-th sub-block of
\begin{equation}
\label{tmp:Rterminal}
R^k:=\nabla_k \Phi (\bfW_{0:T}) \nabla_k \Phi (\bfW^{\theta,k})^T\in \reals^{qN\times qN}.
\end{equation}
So essentially we want bound \eqref{tmp:Rterminal} in each of its block.  To do that, we need to consider the derivative of the flow $\Psi_t$.

\subsection{Derivative flow}
\begin{lem}
\label{lem:derivative}
Fix any realization of $\bfW_{0:T}$ and $\bfW_{0:T}'$. Let $Q_h(s)= \exp(hG_* s) \exp(hG_* s)^T$. The $(i,j)$-th sub-block of $R^k$ is bounded by
\[
\|R^k_{i,j}\|_{F}\leq \|h\Sigma^2\|_F [Q(T-k)]_{i,j}\quad k\geq 1; \quad \|R^0_{i,j}\|_{F}\leq \|\Sigma^2_0\|_F [Q(T)]_{i,j}.
\]
\end{lem}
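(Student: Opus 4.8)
The plan is to exploit the back-propagation structure \eqref{eqn:gradientphi}, which writes $\nabla_k\Phi$ as a product of flow-map Jacobians times $\Sigma_h$ (or $\Sigma_0$ when $k=0$). Setting $M:=\nabla\Psi_h(\bfX_{T-1})\cdots\nabla\Psi_h(\bfX_k)$ for the trajectory driven by $\bfW_{0:T}$ and $M'$ for the analogous product driven by the mixed realization $\bfW^{\theta,k}$, the object $R^k$ from \eqref{tmp:Rterminal} factors as $R^k=M\,\Sigma_h\Sigma_h^T\,M'^T$. Since $\Sigma_h\Sigma_h^T=hD_\Sigma D_\Sigma^T$ is block-diagonal with diagonal blocks $h\Sigma^2$, the $(i,j)$ block is $R^k_{i,j}=\sum_l M_{i,l}\,(h\Sigma^2)\,(M'_{j,l})^T$. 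Applying the mixed submultiplicativity $\|AB\|_F\le\|A\|\,\|B\|_F$ and $\|BC\|_F\le\|B\|_F\,\|C\|$ gives $\|R^k_{i,j}\|_F\le\|h\Sigma^2\|_F\sum_l\|M_{i,l}\|\,\|M'_{j,l}\|$, so the whole estimate reduces to bounding the block operator norms of the two Jacobian products.

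To organize this I would set up a simple block-domination calculus: say a $qN\times qN$ matrix $A$ is \emph{dominated} by a nonnegative $N\times N$ matrix $B$ if $\|A_{i,j}\|\le[B]_{i,j}$ for all $i,j$. The triangle inequality and operator-norm submultiplicativity immediately yield that domination is preserved under products — if $A,A'$ are dominated by entrywise-nonnegative $B,B'$, then $\|(AA')_{i,j}\|\le\sum_l\|A_{i,l}\|\,\|A'_{l,j}\|\le[BB']_{i,j}$, so $AA'$ is dominated by $BB'$. The entire argument then hinges on a single-step bound, $\|[\nabla\Psi_h(\bfX)]_{i,j}\|\le[\exp(hG_*)]_{i,j}$ for every state $\bfX$. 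Granting this, $M$ and $M'$ (each a product of $T-k$ single-step Jacobians along possibly different trajectories) are both dominated by $\exp(hG_*)^{T-k}=\exp(h(T-k)G_*)=:P$; since $P\ge0$ entrywise, $\sum_l\|M_{i,l}\|\,\|M'_{j,l}\|\le\sum_l[P]_{i,l}[P]_{j,l}=[PP^T]_{i,j}=[Q_h(T-k)]_{i,j}$, which is exactly the $k\ge1$ claim. The $k=0$ case is identical with $T$ factors and $\Sigma_0\Sigma_0^T$ in place of $\Sigma_h\Sigma_h^T$, producing $\|\Sigma_0^2\|_F$ and $Q_h(T)$.

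The main obstacle is the single-step bound, which I would prove through the variational equation for the exact flow: $J(s):=\nabla\Psi_s(\bfX)$ solves $\dot J=A(s)J$, $J(0)=\bfI$, with $A(s)=\nabla F(\Psi_s(\bfX))$ whose blocks are controlled by conditions (1)--(3) of Theorem \ref{thm:main}. Fixing a target block index $j$ and a unit vector $v$ supported on block $j$, I propagate $w(s)=J(s)v$ and track $\phi_i(s)=\|w_i(s)\|$. Differentiating $\tfrac12\|w_i\|^2=\langle w_i,\sum_l A_{i,l}w_l\rangle$ and using the symmetric-part bound on the diagonal block (condition (1), which controls $\langle w_i,A_{i,i}w_i\rangle$ together with the self mean-field term via condition (3)) and the operator-norm bounds on the off-diagonal blocks (conditions (2)--(3)) produces the entrywise differential inequality $\dot\phi\le G_*\phi$ with $\phi(0)=e_j$. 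Because $G_*$ has nonnegative off-diagonal entries, it is a quasimonotone (Metzler) generator, and the comparison principle gives $\phi(s)\le\exp(sG_*)e_j$, i.e. $\|J(s)_{i,j}\|\le[\exp(sG_*)]_{i,j}$ after taking the supremum over unit $v$; setting $s=h$ closes the loop.

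The delicate points I expect to handle with care are: the non-differentiability of $\|w_i\|$ at its zeros, which I would treat via upper Dini derivatives or by differentiating $\sqrt{\|w_i\|^2+\delta^2}$ and letting $\delta\downarrow0$; the verification that the diagonal block $A_{i,i}$ of $\nabla F$, which contains both $\nabla_{\bfx_i}\bff_i$ and the self mean-field term, is still governed by $[G_*]_{i,i}=[F_*]_{i,i}+[H_*]_{i,i}$; and the observation that the single-step bound is uniform over trajectories, hence over the parameter $\theta$ entering $\bfW^{\theta,k}$, so the distinct realizations behind $M$ and $M'$ cause no difficulty.
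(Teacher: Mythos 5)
Your proposal is correct, and it reaches the bound by a genuinely different decomposition than the paper. The paper never isolates a single-step estimate: it extends the discrete iterates to a continuous flow, forms $P(s)$ and $P'(s)$ as derivative flows along the two trajectories, and studies the product $R(s)=P(s)(P'(s))^T$ directly through the matrix ODE $\dot{R}=hG(\bfX)R+hR\,G(\bfY)^T$. It then differentiates $\|R_{i,j}(s)\|_F^2$, bounds each term via an elementary matrix inner-product lemma ($A+A^T\preceq 2\lambda I$ implies $\langle AX,X\rangle\le\lambda\langle X,X\rangle$), obtains a differential inequality for the array $\bigl(\|R_{i,j}(s)\|_F\bigr)_{i,j}$ that matches the ODE satisfied by $[Q_h(s)]_{i,j}$, and closes with the same Metzler-matrix comparison principle you invoke (Lemma \ref{lem:exp} in the appendix). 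Your route instead proves the factor-wise domination $\|[\nabla\Psi_h(\bfX)]_{i,j}\|\le[\exp(hG_*)]_{i,j}$ once, via the variational equation over a single step, and then multiplies dominations discretely before combining the two Jacobian products through $[PP^T]_{i,j}$. The two arguments rest on the same pillars — the block bounds from conditions (1)--(3) of Theorem \ref{thm:main} and nonnegativity of $\exp(sG_*)$ for Metzler $G_*$ — but yours is more modular: the single-step lemma is reusable, the two trajectories never need to be coupled in a joint ODE, and your sums over all $l$ handle the fact that the mean-field term makes $G$ a full (not merely block-tridiagonal) matrix, a point the paper's proof glosses over by keeping only the $m=\pm1$ terms. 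The paper's one-shot continuous comparison, on the other hand, adapts more directly to a time-dependent $G_*(t)$, whereas your identity $\exp(hG_*)^{T-k}=\exp(h(T-k)G_*)$ uses constancy of $G_*$ (which is all the lemma as stated requires). Your flagged technical points — the symmetric-part control of the diagonal block including the self mean-field contribution by $[F_*]_{i,i}+[H_*]_{i,i}$, the Dini-derivative treatment of $\|w_i\|$ at zeros, and uniformity over $\theta$ — are exactly the right ones and all resolve as you describe.
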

\begin{proof}
We  will discuss only the $k\geq 1$ case, the analysis $k=0$ case is identical. For any $s\leq T-k$, we can extend the definition of $\bfX(t)$ to noninteger $t$, by letting
\[
\bfX(k+s)=\Psi_{sh}(\bfX(k+\lfloor s\rfloor))\quad \text{if}\,\, s\neq \lfloor s\rfloor.
\]
Here $\lfloor s\rfloor$ denotes the largest integer that is less than $s$.
The derivative flow of $\Psi_t$ can also be extended. We define the following $\reals^{qN\times qN}$ matrix:
\[
P(s)= \nabla \Psi_{(s-\lfloor s\rfloor)h}(\bfX(k+\lfloor s\rfloor))\cdots\nabla \Psi_h(\bfX(k))\Sigma_h.
\]
Then following the back propagation formula \eqref{eqn:gradientphi},  $\nabla_k\Phi=P(T-k)$. $P(s)$ also follows the following differential equation with initial condition $P(0)=\Sigma_h$,
\[
\dot{P}(s)=h G(k+s,\bfX(k+s)) P(s).
\]
$G(s,\bfx)\in \reals^{qN\times qN}$ is the Jacobian created by the ODE flow \eqref{sys:ODE}. So its $(i,j)$-th sub-block is given by
\[
G_{i,j}(s,\bfx)=\nabla_{\bfx_j} \bff_i(s,\bfx_{i-1}, \bfx_{i}, \bfx_{i+1})+\frac1N\nabla_{\bfx_j} h_i(s,\bfx_j).
\]
Clearly, it is a block-tridiagonal matrix.

Because of \eqref{tmp:Rterminal}, we are also interested in $\nabla_k \Phi (\bfW^{\theta,k})$. We consider a numerical sequence generated by \eqref{eqn:update}, but the driving noise is  $\bfW^{\theta,k}$. Fix a $\theta\in [0,\frac\pi2]$.
Let $\bfY(n)=\bfX(n)$ for $n=0,\ldots, k$,
\[
 \bfY(k+1)=\Psi_h(\bfX(k))+\Sigma_h \cos\theta \bfW_k+\Sigma_h\sin\theta \bfW'_k,
\]
and $\bfY(n+1)=\Psi_h(\bfY(n))+\Sigma_h \bfW'_n$ for $n\geq k+1$. We also define $\bfY(k+s)=\Psi_{sh}(\bfY(k+\lfloor s\rfloor))$ for noninteger time $s\neq \lfloor s\rfloor$. Define also the following matrix
\[
P'(s)= \nabla \Psi_{(s-\lfloor s\rfloor)h}(\bfY_{k+\lfloor s\rfloor})\cdots\nabla \Psi_h(\bfY_{k})\Sigma_h.
\]
It follows the ODE
\[
\dot{P}'(s)=h G(k+s,\bfY(k+s)) \dot{P}'(s).
\]
By the back propogation formula \eqref{eqn:gradientphi}, $P'(T-k)=\nabla_k \Phi (\bfW^{\theta,k})$.

Let  $R(s)=P(s)(P'(s))^T$. The quantity we are interested is  $R^k=R(T-k)$. $R(s)$ follows the following ODE:
\[
\dot{R}(s)=h G(\bfX_{k+s})  R(s)+hR(s) G(\bfY_{k+s})^T,\quad R(0)=\Sigma_h \Sigma^T_h=\Sigma_h^2.
\]
Here and below, we write $G(k+s,\bfX_{k+s})$ as $G(\bfX_{k+s})$ for simplicity.
Consider the $(i,j)$-th sub block of $R_{i,j}(s)$. In below, we use $\{AB\}_{i,j}$ to denote the $(i,j)$-th sub block of matrix $AB$.
We also define the inner product of matrices as $\langle A, B\rangle=\text{tr}(A^T B)$. Then  $\| R_{i,j}(s)\|^2_F=\langle R_{i,j}(s), R_{i,j}(s)\rangle $. We take time derivative,  and recall that $G$ is block tridiagonal, so
\begin{align*}
\frac{d}{ds}&\langle R_{i,j}(s), R_{i,j}(s)\rangle
=2h\langle \{G(\bfX_{k+s})  R(s)\}_{i,j}, R_{i,j}(s)\rangle
+2h\langle \{R(s) G(\bfY_{k+s})^T\}_{i,j}, R_{i,j}(s)\rangle\\
&=2h\langle G_{i,i}(\bfX_{k+s}) R_{i,j}(s), R_{i,j}(s)\rangle
+2h\langle R_{i,j}(s) G_{j,j}(\bfY_{k+s})^T, R_{i,j}(s)\rangle\\
&\quad+2h\sum_{m=\pm1} \langle G_{i,i+m}(\bfX_{k+s}) R_{i+m,j}(s), R_{i,j}(s)\rangle+2h\sum_{m=\pm1}\langle R_{i,j+m}(s) G_{j,j+m}(\bfY_{k+s})^T , R_{i,j}(s)\rangle.
\end{align*}
Because of the covariance propagation assumption in Theorem \ref{thm:main}, and an elementary \ref{lem:matrix} for matrix inner product, we can bound each term as below
\[
\langle G_{i,i}(\bfX_{k+s}) R_{i,j}(s), R_{i,j}(s)\rangle\leq [ G_*(s+k)]_{i,i}\|R_{i,j}(s)\|_F^2,
\]
\[
\langle R_{i,j}(s) G_{j,j}(\bfY_{k+s})^T, R_{i,j}(s)\rangle\leq [ G_*(s+k)]_{j,j}\|R_{i,j}(s)\|_F^2,
\]
\[
\langle G_{i,i+m}(\bfX_{k+s}) R_{i+m,j}(s), R_{i,j}(s)\rangle \leq [ G_*(s+k)]_{i,i+m}\|R_{i,j}(s)\|_F\|R_{i+m,j}(s)\|_F,
\]
\[
\langle R_{i,j+m}(s) G_{j,j+m}(\bfY_{k+s})^T , R_{i,j}(s)\rangle \leq [ G_* (s+k)]_{j,j+m}\|R_{i,j}(s)\|_F\|R_{i,j+m}(s)\|_F.
\]
In conclusion, we have the following with the dependence of $G_*$ on $s+k$ suppressed,
\begin{align*}
\frac{d}{ds}\| R_{i,j}(s)\|^2_F
\leq &2([ G_*]_{i,i}+[ G_*]_{j,j})h\|R_{i,j}(s)\|_F^2
+2\sum_{m=\pm 1}[ G_*]_{i,i+m}h\|R_{i,j}(s)\|_F\|R_{i+m,j}(s)\|_F\\
&+2\sum_{m=\pm 1} [ G_*]_{j+m,j}h\|R_{i,j}(s)\|_F\|R_{i,j+m}(s)\|_F.
\end{align*}
Since $\frac{d}{ds}\| R_{i,j}(s)\|^2_F=2\| R_{i,j}(s)\|_F\frac{d}{ds}\| R_{i,j}(s)\|_F$, we have
\begin{align*}
\frac{d}{ds}\| R_{i,j}(s)\|_F
\leq &h([ G_*]_{i,i}+[ G_*]_{j,j})\|R_{i,j}(s)\|_F+h\sum_{m=\pm 1}[ G_*]_{i,i+m}\|R_{i+m,j}(s)\|_F+[ G_*]_{j+m,j}\|R_{i,j+m}(s)\|_F.
\end{align*}
Next we consider $Q_h(s)$. Note that $\|h\Sigma^2\|_F[Q_h(0)]_{i,j}=\|R_{i,j}(0)\|_F$. Moreover, $Q_h$ follows the ODE
\[
\frac{d}{ds}Q_h(s)=hG_*Q_h(s)+hQ_h(s) G_*^T.
\]
 Use the fact that $G_*$ is tridiagonal,
\begin{align*}
\frac d {ds} [Q_h(s)]_{i,j}&=h [G_*Q_h(s)]_{i,j}+h[Q_h(s) G^T_*]_{i,j}\\
&=h([G_*]_{i,i}+[G_*]_{j,j}) [Q_h(s)]_{i,j}+h\sum_{m=\pm 1}[G_*]_{i,i+m} [Q_h(s)]_{i+m,j} +[G_*]_{j,j+m} [Q_h(s)]_{i,j+m}.
\end{align*}
If we view $[Q_h(s)]_{i,j}$ as a linear ODE, the non diagonal term of this ODE system , $[G_*]_{i,i+m}$ and $[G_*]_{j,j+m}$, are all nonnegative.
So by Lemma \ref{lem:exp}, we have that
\[
\| R_{i,j}(s)\|_F\leq [Q(s)]_{i,j}\|h\Sigma^2\|_F.
\]
This leads to our claims.
\end{proof}

\begin{lem}
\label{lem:matrix}
Suppose $A+A^T\preceq 2\lambda_A I$, then $\langle AX, X\rangle\leq \lambda_A \langle X, X\rangle$, and $\langle XA, X \rangle\leq \lambda_A\langle X, X\rangle$.
\end{lem}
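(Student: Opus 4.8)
The plan is to reduce both inequalities to a single scalar fact that the hypothesis $A+A^T\preceq 2\lambda_A I$ supplies, namely that $v^T A v\le \lambda_A\|v\|^2$ for every vector $v$, and then to sum this bound over the columns (respectively rows) of $X$. Throughout I use the matrix inner product $\langle A,B\rangle=\text{tr}(A^T B)$ fixed earlier, so that $\langle X,X\rangle=\|X\|_F^2$.

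First I would record the elementary observation. For any vector $v$, the quantity $v^T A^T v$ is a scalar and therefore equals its own transpose $v^T A v$, so $v^T A^T v=\tfrac12 v^T(A+A^T)v$. The assumption $A+A^T\preceq 2\lambda_A I$ then gives $v^T A^T v\le \lambda_A\|v\|^2$. This is the only place the hypothesis enters, and it shows that only the symmetric part of $A$ is visible to these bilinear forms.

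For the first inequality I would expand $\langle AX,X\rangle=\text{tr}((AX)^T X)=\text{tr}(X^T A^T X)$ and read the trace off the diagonal: writing $x_1,\dots,x_q$ for the columns of $X$, the $k$-th diagonal entry of $X^T A^T X$ is $x_k^T A^T x_k$, whence $\langle AX,X\rangle=\sum_k x_k^T A^T x_k\le \lambda_A\sum_k\|x_k\|^2=\lambda_A\|X\|_F^2=\lambda_A\langle X,X\rangle$ by the observation above. For the second inequality I would instead expand $\langle XA,X\rangle=\text{tr}((XA)^T X)=\text{tr}(A^T X^T X)$ and use the decomposition $X^T X=\sum_k y_k y_k^T$, where $y_k^T$ denote the rows of $X$; then the cyclicity of the trace gives $\text{tr}(A^T X^T X)=\sum_k y_k^T A^T y_k\le \lambda_A\sum_k\|y_k\|^2=\lambda_A\langle X,X\rangle$, again by the observation.

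There is no substantive obstacle here; the statement is a routine linear-algebra fact. The only point requiring care is the bookkeeping with transposes and the cyclic invariance of the trace—in particular noting $v^T A^T v=v^T A v$ so that the bound sees only $A+A^T$, and choosing to slice $X$ by columns for the left-multiplication estimate but by rows for the right-multiplication estimate so that in each case the PSD hypothesis is applied to the appropriate Gram decomposition.
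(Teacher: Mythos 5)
Your proof is correct, and it takes a genuinely different route from the paper's. The paper diagonalizes $X$: it lets $v_i$ be the right eigenvectors of $X$ with $Xv_i=\lambda_i v_i$ and writes $\text{tr}(X^TAX)=\sum_i v_i^* X^TAXv_i=\sum_i|\lambda_i|^2 v_i^*Av_i\leq \lambda_A\sum_i|\lambda_i|^2$, and argues symmetrically for $\langle XA,X\rangle$ using eigenvectors of $X^T$. You instead never touch the spectrum of $X$: you read $\text{tr}(X^TA^TX)$ off the diagonal as a sum of quadratic forms $x_k^TA^Tx_k$ over the columns of $X$, and handle $\text{tr}(A^TX^TX)$ via the Gram decomposition $X^TX=\sum_k y_ky_k^T$ over the rows, each term bounded by the scalar fact $v^TAv\leq\lambda_A\|v\|^2$. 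Your version buys something real: the paper's identities $\text{tr}(M)=\sum_i v_i^*Mv_i$ and $\sum_i|\lambda_i|^2=\langle X,X\rangle$ are only valid when the eigenvectors form an orthonormal basis, i.e.\ when $X$ is normal (and diagonalizable at all), whereas the matrix $X=R_{i,j}(s)$ to which the lemma is applied is a general $q\times q$ block with no such structure. Your column/row slicing works for arbitrary (even rectangular) $X$ and so closes a small gap in the paper's argument rather than merely restating it.
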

\begin{proof}
Let $v_i$ be the right eigenvectors of $X$, so $Xv_i=\lambda_i v_i$, then
\[
\langle AX, X\rangle =\text{tr}(AXX^T)=\text{tr}(X^T A X)=\sum_i v_i^* X^T AX v_i
=\sum_{i}|\lambda_i|^2 v_i^* Av_i\leq \lambda_A \sum_{i} |\lambda_i|^2=\lambda_A \langle X, X\rangle.
\]
For the second claim, let $v_i$ be the right eigenvectors of $X^T$, so $X^Tv_i=\lambda_i v_i$
\[
\langle XA, X\rangle =\text{tr}(XAX^T)=\sum_i v_i^* X^T AX v_i
=\sum_{i}|\lambda_i|^2 v_i^* Av_i\leq \lambda_A \sum_{i} |\lambda_i|^2=\lambda_A \langle X, X\rangle.
\]
\end{proof}

\begin{lem}
\label{lem:exp}
If $A\in \reals^{d\times d}$ has all non-diagonal entries being nonnegative, then $\exp(sA)$ has all entries being nonnegative for any $s$. In particular, if $X(s)\in \reals^d$ and $Y(s)\in\reals^d$ satisfies
\[
\frac d{ds}X(s)\leq A X(s),\quad \frac d{ds}Y(s)=A Y(s),
\]
where $\leq$ is interpreted entry-wise. Then $X(0)\leq Y(0)$  leads to  $X(s)\leq Y(s)$ entry-wise.
\end{lem}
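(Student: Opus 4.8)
The plan is to reduce both assertions to a single fact: the entrywise nonnegativity of the matrix exponential $\exp(sA)$ for $s\geq 0$ (this is what the first sentence asserts, and since the application in Lemma \ref{lem:derivative} only ever uses $s=T-k\geq 0$, I read ``for any $s$'' as ``for $s\geq 0$,'' which is exactly where the off-diagonal sign hypothesis is needed).

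First I would establish the nonnegativity of $\exp(sA)$ by a diagonal shift. Choose a scalar $c$ with $c\leq \min_i [A]_{i,i}$ and set $B=A-cI$. Then $B$ has the same off-diagonal entries as $A$, which are nonnegative by hypothesis, and its diagonal entries $[A]_{i,i}-c$ are nonnegative by the choice of $c$; hence $B$ is entrywise nonnegative. Consequently each power $B^k$ is entrywise nonnegative, so for $s\geq 0$ the series $\exp(sB)=\sum_{k\geq 0} s^k B^k/k!$ is a convergent sum of entrywise nonnegative matrices and is therefore entrywise nonnegative. Since $A$ and $cI$ commute, $\exp(sA)=e^{sc}\exp(sB)$, and multiplying by the positive scalar $e^{sc}$ preserves nonnegativity. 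This proves the first claim.

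For the comparison principle I would introduce $Z(s)=Y(s)-X(s)$ and record the differential inequality it satisfies. Using $\frac{d}{ds}Y=AY$ together with $\frac{d}{ds}X\leq AX$ (entrywise),
\[
\frac{d}{ds}Z(s)=AY(s)-\frac{d}{ds}X(s)\geq AY(s)-AX(s)=AZ(s),
\]
so that $g(s):=\frac{d}{ds}Z(s)-AZ(s)$ is entrywise nonnegative. Applying the integrating factor $\exp(-sA)$ gives $\frac{d}{ds}\bigl(\exp(-sA)Z(s)\bigr)=\exp(-sA)g(s)$, and integrating over $[0,s]$ and multiplying back by $\exp(sA)$ yields the Duhamel representation
\[
Z(s)=\exp(sA)Z(0)+\int_0^s \exp((s-r)A)\,g(r)\,dr.
\]
By the first part, $\exp(sA)$ and every $\exp((s-r)A)$ with $0\leq r\leq s$ are entrywise nonnegative; combined with $Z(0)=Y(0)-X(0)\geq 0$ and $g(r)\geq 0$, each term on the right is a nonnegative combination of entrywise nonnegative quantities, so $Z(s)\geq 0$, i.e. $X(s)\leq Y(s)$ entrywise.

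There is no serious analytic obstacle here, since this is a standard Metzler-matrix positivity statement; the only points I would be careful about are bookkeeping ones. The first is the sign convention: the inequality $\frac{d}{ds}X\leq AX$ makes $X$ a subsolution, which is why the conclusion is $X\leq Y$ rather than the reverse. The second is that one must not try to read off the sign of $Z(s)$ directly from $\exp(-sA)g(s)$, because $\exp(-sA)$ need not be entrywise nonnegative; the Duhamel form above is precisely what repackages the integrating-factor identity so that only the nonnegative semigroup $\exp((s-r)A)$ with $s-r\geq 0$ appears, and this is exactly the role played by the restriction $s\geq 0$.
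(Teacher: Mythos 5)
Your proof is correct. The first half (nonnegativity of $\exp(sA)$ via the diagonal shift $B=A-cI$ and the power series) is essentially identical to the paper's argument, which shifts by $\lambda\bfI$ and uses commutativity in the same way. For the comparison principle, however, you take a genuinely different route: you write $Z=Y-X$, identify the nonnegative defect $g=Z'-AZ$, and use the integrating factor to obtain the Duhamel representation $Z(s)=\exp(sA)Z(0)+\int_0^s\exp((s-r)A)g(r)\,dr$, after which everything follows from the first part. The paper instead argues by forward invariance: it observes that whenever $\Delta=Y-X\geq 0$ one has $\frac{d}{ds}\Delta\geq A\Delta\geq 0$ (after reducing to nonnegative $A$ by the exponential rescaling $e^{\lambda t}X(t)$, $e^{\lambda t}Y(t)$), so $\Delta$ cannot leave the nonnegative orthant. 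Your version buys rigor and economy: the invariance argument, as stated in the paper, needs an implicit continuity/first-exit-time step to be airtight, whereas your Duhamel identity reduces the whole claim to the already-proved nonnegativity of the semigroup, and your remark that one must not read the sign off $\exp(-sA)g(s)$ directly (since $\exp(-sA)$ need not be nonnegative) correctly isolates the one place where care is required. The paper's version buys a shorter, more intuitive picture of why the orthant is invariant. Your reading of ``for any $s$'' as ``for $s\geq 0$'' is also the right one; the claim is false for $s<0$ in general, and both proofs implicitly use $s\geq 0$.
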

\begin{proof}
The statement is elementary if all of $A$ diagonal entries are also nonnegative, since
\[
\exp(sA)=I+sA+ \frac1{2!} s^2 a^2+\frac1{3!} s^3 A^3+\cdots.
\]
On the other hand, we can always find a $\lambda$ so that $\lambda \bfI+sA$ has all  diagonal entries being nonnegative. Because $\bfI$ commute with $sA$, so
\[
\exp(sA)=\exp(-\lambda \bfI)\exp(\lambda \bfI+sA)=e^{-\lambda} \exp(\lambda \bfI+sA),
\]
and all entries of $\exp(sA)$ are nonnegative.

For the ODE system, we first consider the case when all entries of $A$ are positive.  Define $\Delta(s)=Y(s)-X(s)$. $\Delta(0)\geq \mathbf{0}$ entry-wise. Also when $\Delta(s)\geq 0$,
\[
\frac d{ds}\Delta(s)\geq A \Delta(s)\geq \mathbf{0}.
\]
meaning $\Delta(s)$ is increasing entry-wise. Therefore $\Delta (s)\geq \mathbf{0}$ entry-wise for $s\geq 0$.

If the diagonal terms of $A$ are negative, we can consider $X_\lambda(t)=e^{\lambda t}X(t)$ and $Y_\lambda(t)=e^{\lambda t} Y(t)$, then
\[
\frac d{ds}X_\lambda (s)\leq (A+\lambda\bfI) X(s),\quad \frac d{ds}Y(s)=(A+\lambda\bfI) Y(s).
\]
By our previous analysis, $X_\lambda(t)\leq Y_\lambda(t)$ entry-wise, which leads to $X(t)\leq Y(t)$ entry-wise.
\end{proof}

\subsection{Summarizing argument}
\begin{proof}[Proof of Theorem \ref{thm:main}]
Recall in  \eqref{eqn:Edeltak}, we have
\[
|\E \Delta_k |\leq \|\nabla f\|^2_\infty \int^{\frac{\pi}{2}}_0\sin\theta |\E \text{tr} (R^k_{i,j})|d\theta.
\]
Note that $R^k_{i,j}$ is $q\times q$ dimensional, so by Cauchy-Schwartz,  $|\text{tr} (R^k_{i,j})|\leq \sqrt{q}\|R^k_{i,j}\|_F.$
Lemma \ref{lem:derivative} shows that
\[
\|R^k_{i,j}\|_{F}\leq h\|\Sigma^2\|_F [Q_h(T-k)]_{i,j}\quad k\geq 1; \quad \|R^0_{i,j}\|_{F}\leq \|\Sigma_0^2\|_F [Q_h(T)]_{i,j}.
\]
Therefore for $k\geq1$,
\[
|\E \Delta_k |\leq  h\sqrt{q}\|\nabla g\|^2_\infty  \|\Sigma^2\|_F [Q_h(T-k)]_{i,j} \int^{\frac{\pi}{2}}_0\sin\theta d\theta=
\sqrt{q}h\|\nabla g\|^2_\infty  \|\Sigma^2\|_F [Q_h(T-k)]_{i,j},
\]
and likewise $|\E \Delta_0 |\leq \sqrt{q}\|\nabla g\|^2_\infty  \|\Sigma^2_0\|_F [Q_h(T)]_{i,j}$. Consequentially, we have
\begin{equation}
\label{eqn:final}
|\text{cov} (g(\bfX_i (T)), g(\bfX_j(T)))|\leq
 \sqrt{q}\|\nabla g\|^2_\infty \left(\|\Sigma_0^2\|_FQ_h(T)+h\|\Sigma^2\|_F\sum_{k=1}^T \E [Q_h(T-k)]_{i,j}\right).
\end{equation}
When we let $h\to 0$, $\bfX (T)\to \bfx(t)$ by Lemma \ref{lem:htozero}. Then
\begin{align*}
&\left|\E g(\bfX_i (T)) g(\bfX_j(T))- \E g(\bfx_i (t))g(\bfx_j(t))\right|\\
&\leq
\E |g(\bfx_ i(t)) (g(\bfX_j(T))-g(\bfx_j(t))) |+ \E |g(\bfx_ j(t)) (g(\bfX_i(T))-g(\bfx_i(t))) |\\
&\quad+ \E |(g(\bfX_i (T))-g(\bfx_i (T))) (g(\bfX_j(T))-g(\bfx_j(t))) |\\
&\leq \sqrt{\E |g(\bfx_i (T))|^2 \E |g(\bfX_j(T))-g(\bfx_j(t))|^2}+ \sqrt{\E |g(\bfx_ j(t))|^2 \E|g(\bfX_i(T))-g(\bfx_i(t))|^2}\\
&\quad+ \sqrt{\E |g(\bfX_i(T))-g(\bfx_i(t))|^2 \E |g(\bfX_j(T))-g(\bfx_j(t))|^2}
\end{align*}
By the Lipschitzness of SDE coefficient, it is standard to show that $\E |g(\bfx_i(t))|^2\leq  \|\nabla g\|_\infty^2 \E \|\bfx_i(t))\|^2$ is bounded. By Lemma \ref{lem:htozero} we know
\[
\E |g(\bfX_j(T))-g(\bfx_j(t))|^2\leq \E \|\nabla g\|_\infty^2 \|\bfX_j(T)-\bfx_j(t)\|^2=O(h).
\]
Also note that $Q_h(n)=Q(hn)$, by letting $h\to 0$, we have our claim.
\end{proof}
\subsection{Continuous time random walk}

\begin{lem}
The $Q(s)$ in Theorem \ref{thm:heat} has the following upper bound
\[
[Q(s)]_{i,j}\leq 2\exp\left(\lambda_\beta s\right)\left(\exp(-\beta \bfd(i,j))+\frac{(1+e^{-\beta})(e^{\lambda_H s}-1)}{(1-e^{-\beta})N }\right),
\]
and
\begin{align*}
\int^t_0 [Q(s)]_{i,j}ds&\leq     \frac{2\exp\left(\lambda_\beta t\right)-2}{\lambda_\beta}\left( e^{-\beta \bfd(i,j)}-\frac2{(1-e^{-\beta}) N}\right)+\frac{2(\exp\left(\eta_\beta  t \right)-1)(1+e^{-\beta})}{\eta_\beta (1-e^{-\beta}) N}.
\end{align*}
\end{lem}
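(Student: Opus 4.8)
The plan is to exploit that, for the homogeneous surrogate of Theorem \ref{thm:heat}, $G_*=F_*+H_*$ is the \emph{constant} symmetric circulant matrix
\[
G_* = (\lambda_0+2\lambda_F)\bfI + \lambda_F\Delta + \tfrac{\lambda_H}{N}\mathbf{1}\mathbf{1}^T ,
\]
where $\Delta$ is the discrete cyclic Laplacian from \eqref{eqn:linearsurro} and $\mathbf{1}$ is the all-ones vector. Since $G_*$ is constant on $[t-s,t]$ we have $\int_{t-s}^t G_*\,dr=sG_*$, so $Q(s)=\exp(sG_*)\exp(sG_*)^T$. The first step is to split $G_*=M+V$ with local part $M:=(\lambda_0+2\lambda_F)\bfI+\lambda_F\Delta$ and mean-field part $V:=\tfrac{\lambda_H}{N}\mathbf{1}\mathbf{1}^T$, and to observe that $M$ and $V$ commute: because $\Delta\mathbf{1}=0$ and $\mathbf{1}^T\Delta=0$ one has $\Delta V=V\Delta=0$, whence $MV=VM=(\lambda_0+2\lambda_F)V$. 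Commutativity factorizes the exponential, $\exp(sG_*)=\exp(sM)\exp(sV)$, so $[Q(s)]_{i,j}$ reduces to a product of a local heat kernel and a mean-field kernel, which I can control separately.

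For the mean-field factor, $V$ is rank one with $(\mathbf{1}\mathbf{1}^T)^k=N^{k-1}\mathbf{1}\mathbf{1}^T$, so $\exp(sV)=\bfI+\tfrac{1}{N}(e^{\lambda_H s}-1)\mathbf{1}\mathbf{1}^T$; this is exactly the source of the global $O(1/N)$ term, contributing a constant to every entry. For the local factor I would run the continuous-time-random-walk / exponential-tilt argument. The key observation is that $e^{\pm\beta r}$ are eigenfunctions of the discrete Laplacian on $\mathbb{Z}$ with eigenvalue $e^\beta+e^{-\beta}-2$, so that the \emph{symmetrized} profile $g(\bfd(i,j)):=e^{-\beta\bfd(i,j)}+e^{-\beta(N-\bfd(i,j))}$ is a super-solution of the cyclic heat equation $\dot u=\lambda_F\Delta u$: in the position variable $r$ it equals $e^{-\beta r}+e^{\beta(r-N)}$, a combination of the two eigenfunctions, and one checks that at the seam $r\equiv 0$ the cyclic neighbour only lowers $\Delta g$ relative to the eigenvalue identity. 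Since all entries of $\exp(sM)$ are nonnegative, the entrywise comparison principle (Lemma \ref{lem:exp}) then gives
\[
[\exp(sM)]_{i,j}\le e^{\lambda_\beta s}\,g(\bfd(i,j))\le 2\,e^{\lambda_\beta s}\,e^{-\beta\bfd(i,j)},
\]
the rate $\lambda_\beta=\lambda_0+\lambda_F(e^\beta+e^{-\beta})$ arising from combining the identity part $\lambda_0+2\lambda_F$ with the Laplacian eigenvalue $\lambda_F(e^\beta+e^{-\beta}-2)$, and the factor $2$ coming from $N-\bfd(i,j)\ge\bfd(i,j)$.

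Combining the two factors gives $[Q(s)]_{i,j}=[\exp(sM)]_{i,j}+\tfrac{1}{N}(e^{\lambda_H s}-1)\sum_l[\exp(sM)]_{i,l}$; bounding the first term by the display above and the row sum by $\sum_l e^{-\beta\bfd(i,l)}\le 1+2\sum_{d\ge1}e^{-\beta d}=\tfrac{1+e^{-\beta}}{1-e^{-\beta}}$ yields the first stated estimate, the prefactor $\tfrac{1+e^{-\beta}}{1-e^{-\beta}}$ on the global term coming precisely from this geometric sum. The integral bound then follows by integrating in $s$ over $[0,t]$: the decay term integrates to $\tfrac{2(e^{\lambda_\beta t}-1)}{\lambda_\beta}e^{-\beta\bfd(i,j)}$, while the global term, using $\eta_\beta=\lambda_\beta+\lambda_H$, integrates to $\tfrac{e^{\eta_\beta t}-1}{\eta_\beta}-\tfrac{e^{\lambda_\beta t}-1}{\lambda_\beta}$, after which the $-\tfrac{e^{\lambda_\beta t}-1}{\lambda_\beta}$ contribution is regrouped into the diagonal term to reach the stated form. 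The main obstacle is exactly this local factor on the cycle: the naive ansatz $e^{-\beta\bfd(i,j)}$ fails to be a super-solution near the antipode $\bfd(i,j)=N/2$, where the discrete Laplacian of $e^{-\beta\bfd}$ exceeds its eigenvalue multiple, and the symmetrization $e^{-\beta\bfd}+e^{-\beta(N-\bfd)}$ (i.e.\ summing the two ways around the loop) is what repairs this; carrying the resulting constants through the product of the two commuting kernels is then the only delicate bookkeeping.
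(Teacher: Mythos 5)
Your proof is correct, and its overall architecture coincides with the paper's: split $G_*$ into the tridiagonal local part and the rank-one mean-field part, use that they commute to factor the exponential, compute $\exp(sH_*)=\bfI+\tfrac{1}{N}(e^{\lambda_H s}-1)\mathbf{1}\mathbf{1}^T$ exactly, bound the row sum of the local kernel by the geometric series $\tfrac{1+e^{-\beta}}{1-e^{-\beta}}$, and integrate in $s$ using $\eta_\beta=\lambda_\beta+\lambda_H$. (Like the paper, you bound the single exponential $\exp(sG_*)$ rather than $\exp(sG_*)\exp(sG_*)^T$, so you inherit its convention for $Q$, and your regrouped integral matches the final display of the paper's proof rather than the slightly different constant in the lemma statement — neither is a defect of your argument relative to the paper's.)

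The one step where you take a genuinely different route is the local kernel $\exp(sF_*)$. The paper identifies $F_*-(\lambda_0+2\lambda_F)\bfI$ with the transition-rate matrix of a continuous-time random walk on $\mathbb{Z}/N\mathbb{Z}$, unfolds it to $\mathbb{Z}$ by summing over periodic images, and applies a Chernoff bound: Dynkin's formula gives $\E e^{\beta Z_s}=\exp(\lambda_F s(e^\beta+e^{-\beta}-2))$, and Markov's inequality produces the factor $e^{-\beta\bfd(i,j)}$. You instead build the explicit symmetrized profile $e^{-\beta\bfd(i,j)}+e^{-\beta(N-\bfd(i,j))}$, check it is an entrywise super-solution of the cyclic heat flow (including at the seam $r=0$, which is the only index where the eigenfunction identity degrades, and degrades in the favorable direction), and invoke the comparison principle of Lemma \ref{lem:exp}. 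The two mechanisms are equivalent in substance — both rest on $e^{\pm\beta r}$ being eigenfunctions of the discrete Laplacian with eigenvalue $e^\beta+e^{-\beta}-2$, and both yield the same $\lambda_\beta$ and the same factor $2$ — but yours is purely deterministic/ODE-based and handles the periodicity directly on the cycle, whereas the paper's is probabilistic and handles it by unfolding. One small caveat: Lemma \ref{lem:exp} is stated for a sub-solution compared against an exact solution, while you compare an exact solution against a super-solution; the lemma's proof extends verbatim to this case, but strictly you are using a (trivial) two-sided variant of it rather than the lemma as written.
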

\begin{proof}
Let $F_H=F_*-\lambda_G \bfI_{N}$, then its entries are
\[
[F_H]_{i,i}=-2\lambda_F,\quad [F_H]_{i,i\pm 1}=\lambda_F.
\]
Then $\exp(F_H s)=e^{s\lambda_G }\exp(F_* s)$. Moreover, because $H_* F_H=F_H H_*=\mathbf{0}$, so  $\exp(G_H s) =\exp(F_H s)\exp(H_* s)$. We first study the behavior of $\exp(F_Hs)$.

Let $Z_t$ be a continuous time random walk on  $\mathbb{Z}$,  with rate $\lambda_{z,z\pm1}=\lambda_H.$ Denote its projection on the modular space $\mathbb{Z}/N\mathbb{Z}$ as $Y_t$, which becomes  a continuous time random walk on $\mathbb{Z}/N\mathbb{Z}$. Clearly $F_H$ is the transition rate matrix of $Y_t$. Therefore,
\[
[\exp(F_Hs)]_{i,j}=\Prob(Y_s=j|Y_0=i).
\]
Then by the Markov property and the translation invariance of random walk,
\[
[\exp(F_Hs)]_{i,j}=\sum_{n=-\infty}^\infty\Prob(Z_s=j-i +nN|Z_0=0)\leq \Prob(|Z_s|\geq \bfd(i,j)|Z_0=0)
=2\Prob(Z_s> \bfd(i,j)|Z_0=0).
\]
Next we investigate the Laplace transform $e^{\beta z}$. Applying the generator of $Z_t$
\[
\mathcal{A} e^{\beta z}=\lambda_F e^{\beta (z+1)}+\lambda_F e^{\beta (z-1)}-2\lambda_F e^{\beta z}
=\lambda_F (e^\beta+e^{-\beta}-2) e^{\beta z}.
\]
By Dynkin's formula,
\[
\frac{d}{dt}\E e^{\beta Z_t}=\lambda_F (e^\beta+e^{-\beta}-2)  \E e^{\beta Z_t},
\]
therefore, $\E (e^{\beta Z_t}|Z_0=0)=\exp(\lambda_F t (e^\beta+e^{-\beta}-2))$. By Markov inequality,
\[
\Prob(Z_s> \bfd(i,j)|Z_0=0)\leq 2\exp\left(\lambda_F s (e^\beta+e^{-\beta}-2)-\beta  \bfd(i,j)\right).
\]
This leads to our first claim on $[\exp (F_H s)]_{i,j}\leq 2\exp\left(s\lambda_G+ \lambda_F s (e^\beta+e^{-\beta}-2)-\beta  \bfd(i,j)\right)$.

 Next we try to compute $\exp(H_* s)$. Because $(H_*/\lambda_H)^n=H_*/\lambda_H=\frac1N\mathbf{1} \cdot \mathbf{1}^T$ for any $n\geq 0$, so
\[
\exp(H_* s)=\sum_{n=0}^\infty \frac{s^n\lambda_H^n}{n!}\left(\frac{H_*}{\lambda_H}\right)^n = \frac{e^{\lambda_H s}-1}{\lambda_H} H_*+\bfI_N.
\]
Then
\begin{align*}
[Q(s)]_{i,j} &\leq \sum_{k} [\exp (F_H s)]_{i,k}[\exp(H_* s)]_{k,j}\\
&=[\exp (F_H s)]_{i,j}+\frac1N(e^{\lambda_H s}-1)\sum_{k=1}^N [\exp (F_H s)]_{i,k}\\
&\leq 2\exp\left(\lambda_Gs+ \lambda_F s (e^\beta+e^{-\beta}-2)\right)\left(\exp(-\beta \bfd(i,j))+\frac{e^{\lambda_H s}-1}{N }\sum_{k=1}^N \exp(-\beta \bfd(i,k))\right)\\
&\leq 2\exp\left(\lambda_\beta s\right)\left(\exp(-\beta \bfd(i,j))+\frac{(1+e^{-\beta})(e^{\lambda_H s}-1)}{(1-e^{-\beta})N }\right).
\end{align*}

 Its integral can be further bounded by
\begin{align*}
\int^t_0 [Q(s)]_{i,j}ds &\leq 2\int^t_0\exp\left(\lambda_\beta s\right)\left(\exp(-\beta \bfd(i,j))+\frac{(1+e^{-\beta})(e^{\lambda_H s}-1)}{(1-e^{-\beta})N }\right)ds\\
&= \frac{2\exp\left(\lambda_\beta t\right)-2}{\lambda_\beta}\left( e^{-\beta \bfd(i,j)}-\frac{1+e^{-\beta}}{(1-e^{-\beta}) N}\right)+\frac{2(\exp\left(\eta_\beta  t \right)-1)(1+e^{-\beta})}{\eta_\beta (1-e^{-\beta}) N}.
\end{align*}
\end{proof}

\section{Analytic solution of the linear model}
\label{app:linear}
For the linear model \eqref{linear_model}, collecting all the state variables $\mathbf{u} = (u_1, u_2, \ldots, u_N)$, the abstract form the the system is given by
\begin{equation}\label{linear_model_abstract}
  \frac{d\mathbf{u}}{dt} = \mathbf{A}\mathbf{u} + \boldsymbol\Sigma \dot{\mathbf{W}}_u.
\end{equation}
According to \eqref{linear_model}, the coefficient matrices $\mathbf{A}$ and $\boldsymbol\Sigma$ are given by
\begin{equation*}
\begin{split}
    \mathbf{A} =& \left(
                   \begin{array}{cccccc}
                     -a &   &   &   & &\\
                       & -a &   &   & &\\
                       &  & -a &   &   & \\
                       &   & &\ddots  && \\
                       &   &   &  &-a& \\
                       &   &   &  & &-a \\
                   \end{array}
                 \right) + d_u \left(
                   \begin{array}{cccccc}
                     -2 & 1  &          &           &       &1\\
                      1 & -2 & 1        &           &       &\\
                        &  1 & -2       & 1         &       &\\
                        &    &  \ddots  & \ddots    &\ddots &\\
                        &    &          & 1         & -2    & 1\\
                      1 &    &          &           &  1    & -2\\
                   \end{array}
                 \right)\\
    &+ \frac{w}{N}\left(
                   \begin{array}{cccccc}
                     1 & 1 & \ldots  & \ldots & 1 & 1 \\
                     1 & 1 & \ldots  & \ldots & 1 & 1 \\
                     \vdots & \vdots & \ddots & \ddots & \vdots & \vdots \\
                     \vdots & \vdots & \ddots & \ddots & \vdots & \vdots \\
                     1 & 1 & \ldots  & \ldots & 1 & 1 \\
                     1 & 1 & \ldots  & \ldots & 1 & 1 \\
                   \end{array}
                 \right)- w \left(
                   \begin{array}{cccccc}
                     1 &   &   &   &   &   \\
                       & 1 &   &   &   &   \\
                       &   & 1 &   &   &   \\
                       &   &   & 1 &   &   \\
                       &   &   &   & 1 &   \\
                       &   &   &   &   & 1 \\
                   \end{array}
                 \right)   \\
  \boldsymbol\Sigma =& \sigma_u\left(
                                \begin{array}{cccc}
                                  1 &  & & \\
                                    & 1 & &\\
                                    & &\ddots &  \\
                                    &  && 1 \\
                                \end{array}
                              \right)
\end{split}
\end{equation*}
It is easy to verify that $\mathbf{A}$ is a negative definite matrix. Therefore, eigenvalue decomposition allows the following
\begin{equation*}
  \mathbf{A} = \mathbf{Q}\boldsymbol\Lambda\mathbf{Q}^T,
\end{equation*}
where $\boldsymbol\Lambda$ is a diagonal matrix with all diagonal entries $\lambda_1,\ldots,\lambda_N$ being negative.

The solution of $\mathbf{u}$ is given by
\begin{equation}\label{soln_u}
  \mathbf{u}(t) = \mathbf{u}(0)e^{\mathbf{A}t} + e^{\mathbf{A}t}\int_0^t e^{-\mathbf{A}t}\boldsymbol\Sigma d\mathbf{W}_u.
\end{equation}
The time evolution of the covariance matrix is given by
\begin{equation}\label{cov_u}
\begin{split}
  \langle\mathbf{u}_t^{\prime2} \rangle &= \langle\mathbf{u}_0^{\prime2} \rangle e^{2\mathbf{A}t} + \sigma_u^2\int_0^t e^{2\mathbf{A}(t-s)} ds\\
  &=\langle\mathbf{u}_0^{\prime2} \rangle \mathbf{Q}e^{2\boldsymbol\Lambda t}\mathbf{Q}^T + \sigma_u^2\int_0^t \mathbf{Q}e^{2\boldsymbol\Lambda (t-s)}\mathbf{Q}^T ds\\
  &=\langle\mathbf{u}_0^{\prime2} \rangle \mathbf{Q}e^{2\boldsymbol\Lambda t}\mathbf{Q}^T + \sigma_u^2 \mathbf{Q}\left\{\frac{e^{2\lambda_it}-1}{2\lambda_i}\right\}\mathbf{Q}^T, \\
\end{split}
\end{equation}
where
\begin{equation*}
  \left\{\frac{e^{2\lambda_it}-1}{2\lambda_i}\right\} = \left(
                                                          \begin{array}{ccc}
                                                            (e^{2\lambda_1t}-1)/(2\lambda_1) &   &  \\
                                                              & \ddots &  \\
                                                              &   & (e^{2\lambda_Nt}-1)/(2\lambda_N) \\
                                                          \end{array}
                                                        \right).
\end{equation*}

\bibliographystyle{plain}
\bibliography{references}

\begin{thebibliography}{10}

\bibitem{anderson1979optimal}
Brian~DO Anderson and John~B Moore.
\newblock Optimal filtering.
\newblock {\em Englewood Cliffs}, 21:22--95, 1979.

\bibitem{anderson2007exploring}
Jeffrey~L Anderson.
\newblock Exploring the need for localization in ensemble data assimilation
  using a hierarchical ensemble filter.
\newblock {\em Physica D: Nonlinear Phenomena}, 230(1-2):99--111, 2007.

\bibitem{BY88}
Z.~D. Bai and Y.~Q. Yin.
\newblock Convergence to the semicircle law.
\newblock {\em Ann. Probab.}, 16(2):863--875, 1988.

\bibitem{bellman2015adaptive}
Richard~E Bellman.
\newblock {\em Adaptive control processes: a guided tour}, volume 2045.
\newblock Princeton university press, 2015.

\bibitem{BL08}
P.~Bickel and E.~Levina.
\newblock Covariance regularization by thresholding.
\newblock {\em The Annals of Statistics}, 36(1):199--227, 2008.

\bibitem{biktasheva2006localization}
Irina~V Biktasheva, Arun~V Holden, and Vadim~N Biktashev.
\newblock Localization of response functions of spiral waves in the
  fitzhugh--nagumo system.
\newblock {\em International Journal of Bifurcation and Chaos},
  16(05):1547--1555, 2006.

\bibitem{BuiEtAl13}
T.~Bui-Thanh, O.~Ghattas, J.~Martin, and G.~Stadler.
\newblock A computational framework for infinite-dimensional {B}ayesian inverse
  problems. {P}art {I}: {T}he linearized case, with application to global
  seismic inversion.
\newblock {\em SIAM J. Sci. Comput.}, 36(4):A2494--A2523, 2013.

\bibitem{CM17pnas}
N.~Chen and A.~J. Majda.
\newblock Beating the curse of dimension with accurate statistics for the
  {F}okker--{P}lanck equation in complex turbulent systems.
\newblock {\em Proc. Natl. Acad. Sci.}, 10(1073), 2017.

\bibitem{CM18}
N.~Chen and A.~J. Majda.
\newblock Efficient statistically accurate algorithms for the
  {F}okker--{P}lanck equation in large dimensions.
\newblock {\em J. Comput. Phys.}, 354:242--268, 2018.

\bibitem{CMT17}
N.~Chen, A.~J. Majda, and X.~T. Tong.
\newblock Rigorous analysis for efficient statistically accurate algorithms for
  solving fokker-planck equations in large dimensions.
\newblock arXiv:1709.05585.

\bibitem{CMT14}
N.~Chen, A.~J. Majda, and X.~T. Tong.
\newblock Information barriers for noisy lagrangian tracers in filtering random
  incompressible flows.
\newblock {\em Nonlinearity}, 27:2133--2163, 2014.

\bibitem{CMT14b}
N.~Chen, A.~J. Majda, and X.~T. Tong.
\newblock Noisy lagrangian tracers for filtering random rotating compressible
  flows.
\newblock {\em J. Non. Sci.}, 25(3):451--488, 2014.

\bibitem{Cotter13}
S.L. Cotter, G.O. Roberts, A.M. Stuart, and D.~White.
\newblock {MCMC} methods for functions: modifying old algorithms to make them
  faster.
\newblock {\em Stat. Sci.}, 28(3):424--446, 2013.

\bibitem{CuiEtAl16b}
T.~Cui, K.~J.~H. Law, and Y.~M. Marzouk.
\newblock Dimension-independent likelihood-informed {MCMC}.
\newblock {\em J. Comput. Phys.}, 304:109--137, 2016.

\bibitem{CuiEtAl14}
T.~Cui, J.~Martin, Y.~M. Marzouk, A.~Solonen, and A.~Spantini.
\newblock Likelihood-informed dimension reduction for nonlinear inverse
  problems.
\newblock {\em Inverse Probl.}, 29:114015, 2014.

\bibitem{deville2005two}
RE~Lee DeVille, Eric Vanden-Eijnden, and Cyrill~B Muratov.
\newblock Two distinct mechanisms of coherence in randomly perturbed dynamical
  systems.
\newblock {\em Physical Review E}, 72(3):031105, 2005.

\bibitem{evensen03}
G.~Evensen.
\newblock The ensemble {K}alman filter: Theoretical formulation and practical
  implementation.
\newblock {\em Ocean dynamics}, 53(4):343--367, 2003.

\bibitem{evensen2009data}
Geir Evensen.
\newblock {\em Data assimilation: the ensemble Kalman filter}.
\newblock Springer Science \& Business Media, 2009.

\bibitem{FlathEtAl11}
H.P. Flath, L.C. Wilcox, V.~Ak\c{c}elik, J.Hill, B.~van Bloemen~Waander, and
  O.~Ghattas.
\newblock Fast algorithms for {B}ayesian uncertainty quantification in
  large-scale linear inverse problems based on low-rank partial {H}essian
  approximations.
\newblock {\em SIAM J. Sci. Comput.}, 33(1):407--432, 2011.

\bibitem{guckenheimer2013nonlinear}
John Guckenheimer and Philip Holmes.
\newblock {\em Nonlinear oscillations, dynamical systems, and bifurcations of
  vector fields}, volume~42.
\newblock Springer Science \& Business Media, 2013.

\bibitem{HWS01}
T.~M. Hamill, C.~Whitaker, and C.~Snyder.
\newblock Distance-dependent filtering of background error covariance estimates
  in an ensemble {K}alman filter.
\newblock {\em Mon. Weather Rev.}, 129:2776--2790, 2001.

\bibitem{Hamilletal2001}
T.~M. Hamill, J.S. Whitaker, and C.~Snyder.
\newblock Distance-dependent filtering of background covariance estimates in an
  ensemble {K}alman filter.
\newblock {\em Mon. Weather Rev.}, 129:2776--2790, 2001.

\bibitem{holden2013nonlinear}
Arunn~V Holden, Mario Markus, and Hans~G Othmer.
\newblock {\em Nonlinear wave processes in excitable media}, volume 244.
\newblock Springer, 2013.

\bibitem{HM98}
P.~L. Houtekamer and H.~L. Mitchell.
\newblock Data assimilation using an ensemble kalman filter technique.
\newblock {\em Mon. Wea. Rev.}, 126(3):796--811, 1998.

\bibitem{Houtekamer2005}
P.~L. Houtekamer, H.~L. Mitchell, G.~Pellerin, M.~Buehner, M.~Charron,
  L.~Spacek, and B.~Hansen.
\newblock Atmospheric data assimilation with an ensemble {K}alman filter:
  Results with real observations.
\newblock {\em Mon. Weather Rev.}, 133:604--620, 2005.

\bibitem{HoutMitch2001}
P.L. Houtekamer and H.L. Mitchell.
\newblock A sequential ensemble {K}alman filter for atmospheric data
  assimilation.
\newblock {\em Mon. Weather Rev.}, 129:123--136, 2001.

\bibitem{HKS07}
B.~R. Hunt, E.~J. Kostelich, and I.~Szunyogh.
\newblock Efficient data assimilation for spatiotemporal chaos: a local
  ensemble transform {K}alman filter.
\newblock {\em Physica D}, 230(1):112--126, 2007.

\bibitem{Ise00}
A.~Iserles.
\newblock How large is the exponential of a banded matrix?
\newblock {\em New Zealand Journal of Mathematics}, 29:177--192, 2000.

\bibitem{kalman1960new}
Rudolph~Emil Kalman.
\newblock A new approach to linear filtering and prediction problems.
\newblock {\em Journal of basic Engineering}, 82(1):35--45, 1960.

\bibitem{kalnay2003atmospheric}
Eugenia Kalnay.
\newblock {\em Atmospheric modeling, data assimilation and predictability}.
\newblock Cambridge university press, 2003.

\bibitem{law2015data}
Kody Law, Andrew Stuart, and Konstantinos Zygalakis.
\newblock {\em Data assimilation: a mathematical introduction}, volume~62.
\newblock Springer, 2015.

\bibitem{lee2015multiscale}
Yoonsang Lee and Andrew~J Majda.
\newblock Multiscale methods for data assimilation in turbulent systems.
\newblock {\em Multiscale Modeling \& Simulation}, 13(2):691--713, 2015.

\bibitem{lindner2004effects}
Benjamin Lindner, Jordi Garc{\i}a-Ojalvo, Alexander Neiman, and Lutz
  Schimansky-Geier.
\newblock Effects of noise in excitable systems.
\newblock {\em Physics reports}, 392(6):321--424, 2004.

\bibitem{lorenz1996predictability}
Edward~N Lorenz.
\newblock Predictability: A problem partly solved.
\newblock In {\em Proc. Seminar on predictability}, volume~1, 1996.

\bibitem{majda2016introduction}
Andrew Majda.
\newblock {\em Introduction to turbulent dynamical systems in complex systems,
  Frontiers in Applied Dynamical Systems: Reviews and Tutorials 5}.
\newblock Springer, 2016.

\bibitem{majda2006nonlinear}
Andrew Majda and Xiaoming Wang.
\newblock {\em Nonlinear dynamics and statistical theories for basic
  geophysical flows}.
\newblock Cambridge University Press, 2006.

\bibitem{majda2012filtering}
Andrew~J Majda and John Harlim.
\newblock {\em Filtering complex turbulent systems}.
\newblock Cambridge University Press, 2012.

\bibitem{majda2014blended}
Andrew~J Majda, Di~Qi, and Themistoklis~P Sapsis.
\newblock Blended particle filters for large-dimensional chaotic dynamical
  systems.
\newblock {\em Proceedings of the National Academy of Sciences},
  111(21):7511--7516, 2014.

\bibitem{mckean1966class}
Henry~P McKean.
\newblock A class of markov processes associated with nonlinear parabolic
  equations.
\newblock {\em Proceedings of the National Academy of Sciences},
  56(6):1907--1911, 1966.

\bibitem{MY07}
T.~Miyoshi and S.~Yamane.
\newblock Local ensemble transform {K}alman filtering with an {AGCM} at a
  {T159/L48} resolution.
\newblock {\em Mon. Wea. Rev.}, 135(11):3841--3861, 2007.

\bibitem{MTM18}
M.~Morzfeld, T.~Tong, and Y.~M. Marzouk.
\newblock Localization for {MCMC}: sampling high-dimensional posterior
  distributions with banded structure.
\newblock arxiv:1710.07747, accepted by J. Comp. Phys.

\bibitem{nerger15}
L.~Nerger.
\newblock On serial observation processing in localized ensemble kalman
  filters.
\newblock {\em Mon. Wea. Rev.}, 143(5):1554--1567, 2015.

\bibitem{Nua95}
D.~Nualart.
\newblock {\em The Malliavin Calculus and Related Topics}.
\newblock Springer, 1995.

\bibitem{ohta1989higher}
T~Ohta, M~Mimura, and Ryo Kobayashi.
\newblock Higher-dimensional localized patterns in excitable media.
\newblock {\em Physica D: Nonlinear Phenomena}, 34(1-2):115--144, 1989.

\bibitem{perc2005spatial}
Matja{\v{z}} Perc.
\newblock Spatial coherence resonance in excitable media.
\newblock {\em Physical Review E}, 72(1):016207, 2005.

\bibitem{PetraEtAl14}
N.~Petra, J.~Martin, G.~Stadler, and O.~Ghattas.
\newblock A computational framework for infinite-dimensional {B}ayesian inverse
  problems. {P}art {II}: {S}tochastic {N}ewton {MCMC} with application to ice
  sheet flow inverse problems.
\newblock {\em SIAM J. Sci. Comput.}, 36(4):A1525--1555, 2013.

\bibitem{powell2007approximate}
Warren~B Powell.
\newblock {\em Approximate Dynamic Programming: Solving the curses of
  dimensionality}, volume 703.
\newblock John Wiley \& Sons, 2007.

\bibitem{rekleitis2004particle}
Ioannis~M Rekleitis.
\newblock A particle filter tutorial for mobile robot localization.
\newblock {\em Centre for Intelligent Machines, McGill University}, 3480, 2004.

\bibitem{ricketson2015multilevel}
LF~Ricketson.
\newblock A multilevel monte carlo method for a class of mckean-vlasov
  processes.
\newblock {\em arXiv preprint arXiv:1508.02299}, 2015.

\bibitem{sheard2009principles}
Sarah~A Sheard and Ali Mostashari.
\newblock Principles of complex systems for systems engineering.
\newblock {\em Systems Engineering}, 12(4):295--311, 2009.

\bibitem{SpantiniEtAl15}
A.~Spantini, A.~Solonen, T.~Cui, J.~Martin, L.~Tenorio, and Y.~M. Marzouk.
\newblock Optimal low-rank approximations of {B}ayesian linear inverse
  problems.
\newblock {\em SIAM J. Sci. Comput.}, 37(6):A2451--A2487, 2015.

\bibitem{stelling2001book}
J~Stelling, A~Kremling, M~Ginkel, K~Bettenbrock, and ED~Gilles.
\newblock {\em of Book: Foundations of systems biology}.
\newblock MIT press, 2001.

\bibitem{Tong18}
X.~T. Tong.
\newblock Performance analysis of local ensemble {K}alman filter.
\newblock {\em J. Nonlinear Science.}, 28(4):1397--1442, 2018.

\bibitem{TMK15non}
X.~T. Tong, A.~J. Majda, and D.~Kelly.
\newblock Nonlinear stability and ergodicity of ensemble based {K}alman
  filters.
\newblock {\em Nonlinearity}, 29:657--691, 2016.

\bibitem{TMK15}
X.~T. Tong, A.~J. Majda, and D.~Kelly.
\newblock Nonlinear stability of the ensemble {K}alman filter with adaptive
  covariance inflation.
\newblock {\em Commun. Math. Sci.}, 14(5):1283--1313, 2016.

\bibitem{tyson1988singular}
John~J Tyson and James~P Keener.
\newblock Singular perturbation theory of traveling waves in excitable media (a
  review).
\newblock {\em Physica D: Nonlinear Phenomena}, 32(3):327--361, 1988.

\bibitem{WH02}
J.~S. Whitaker and T.~M. Hamill.
\newblock Ensemble data assimilation without perturbed observations.
\newblock {\em Mon. Wea. Rev.}, 130(7):1913--1924, 2002.

\end{thebibliography}
\end{document}